\definecolor{blue(munsell)}{rgb}{0.0, 0.5, 0.69}
\def\l@subsection{\@tocline{2}{0pt}{2pc}{5pc}{}}
\DeclareMathOperator{\Hom}{Hom}
\DeclareMathOperator{\Fun}{Fun}
\DeclareMathOperator{\Ob}{Ob}
\DeclareMathOperator{\Res}{Res}
\DeclareMathOperator{\Ind}{Ind}
\DeclareMathOperator{\Mod}{Mod}
\DeclareMathOperator{\Img}{Im}
\DeclareMathOperator{\hproj}{h-proj}
\DeclareMathOperator{\pretr}{pretr}
\DeclareMathOperator{\cone}{C}
\DeclareMathOperator{\compdg}{dgm}
\DeclareMathOperator{\dercomp}{\mathsf{D}}
\DeclareMathOperator{\dercompdg}{\mathsf{D}_{\mathrm{dg}}}
\DeclareMathOperator{\RHom}{\mathbb R\!\Hom}
\DeclareMathOperator{\lotimes}{\overset{\mathbb L}{\otimes}}
\newcommand{\cat}{\mathscr}
\newcommand{\opp}[1]{{#1}^{\mathrm{op}}}
\newcommand{\kat}{\mathsf}
\newcommand{\smallcat}{\mathfrak}
\newcommand{\Hqe}{\kat{Hqe}}
\newcommand{\basering}[1]{\mathbf{#1}}
\newtheorem{theorem}{Theorem}[subsection]
\newtheorem*{theorem*}{Theorem}
\newtheorem{proposition}[theorem]{Proposition}
\newtheorem{corollary}[theorem]{Corollary}
\newtheorem{lemma}[theorem]{Lemma}
\theoremstyle{remark}
\newtheorem{remark}[theorem]{Remark}
\newtheorem{example}[theorem]{Example}
\newtheorem{notation}[theorem]{Notation}
\theoremstyle{definition}
\newtheorem{definition}[theorem]{Definition}
\newtheorem{setup}[theorem]{Setup}
\numberwithin{equation}{section}
\title[Derived Gabriel-Popescu via derived injectives]{A derived Gabriel-Popescu theorem for t-structures via derived injectives}
\author{Francesco Genovese} 
\address[Francesco Genovese]{Universiteit Antwerpen, Departement Wiskunde-Informatica, Middelheimcampus,
Middelheimlaan 1,
2020 Antwerp, Belgium}
\email{francesco.genovese@uantwerpen.be}
\author{Julia Ramos Gonz\'alez}
\address[Julia Ramos Gonz\'alez]{Universiteit Antwerpen, Departement Wiskunde-Informatica, Middelheimcampus,
	Middelheimlaan 1,
	2020 Antwerp, Belgium}
\email{julia.ramosgonzalez@uantwerpen.be}
\thanks{The first named author acknowledges the support of the European Research Council (ERC) under Grant No. 817762 and the support of the Research Foundation - Flanders (FWO) under Grant No. G.0D86.16N.
The second named author is a postdoctoral fellow of the Research Foundation -  Flanders (FWO)}
\begin{document}
\begin{abstract}
    We prove a derived version of the Gabriel-Popescu theorem in the framework of dg-categories and t-structures. This exhibits any pretriangulated dg-category with a suitable t-structure (such that its heart is a Grothendieck abelian category) as a t-exact localization of a derived dg-category of dg-modules. We give an original proof based on a generalization of Mitchell's argument in \emph{A quick proof of the Gabriel-Popesco theorem} and involving derived injective objects. As an application, we also give a short proof that derived categories of Grothendieck abelian categories have a unique dg-enhancement.
\end{abstract}
	\maketitle

	\tableofcontents
	
	\addtocontents{toc}{\protect\setcounter{tocdepth}{1}}
	\section{Introduction}

	\emph{Grothendieck categories} appear naturally in many algebraic and geometric contexts. For instance, they play an essential role in noncommutative algebraic geometry: inspired by the Gabriel-Rosenberg reconstruction theorem \cite{gabriel-abeliancategories} \cite{rosenberg-spectraspaces} \cite{brandenburg-reconstruction}, Grothendieck categories are interpreted as (the categorial counterpart of) noncommutative spaces. The \emph{Gabriel-Popescu theorem} \cite{gabriel-popescu-original} characterizes Grothendieck abelian categories as the localizations of the module categories. More concretely, we can recover any Grothendieck category $\mathfrak G$ as a localizing subcategory of $\Mod(\smallcat a)$, where $\smallcat a$ is a small full subcategory of $\mathfrak G$ generating $\mathfrak G$.  As a consequence of this theorem, working with Grothendieck categories gets reduced to working with concrete categories fully determined by small data, namely, a set of generators.
	
	Moreover, the Gabriel-Popescu theorem draws a strong link between Grothendieck categories and Grothendieck topoi. Indeed, as observed in \cite{lowen-gabrielpopescu}, the Gabriel-Popescu theorem is nothing but the linear counterpart of Giraud's theorem \cite{giraud-analysissitus}; therefore, we can interpret Grothendieck categories as a linear version of Grothendieck topoi. More concretely, we can envision them as categories of sheaves on linear Grothendieck sites (see \cite{lowen-lineartopologies},  \cite{borceux-quinteiro-enrichedsheaves}), and make use, after suitable linearization, of the classical machinery available for sheaf topoi.
	
	\emph{Triangulated categories} also appear naturally in several algebraic and geometric contexts, with derived categories of abelian categories as a prime example. If abelian categories are to be noncommutative spaces, then derived categories (and more general triangulated categories) seem to be candidates for \emph{derived} noncommutative spaces. In this setting, Porta proved in \cite{porta-gabrielpopescu-triangulated} a triangulated version of the Gabriel-Popescu theorem which identifies well generated algebraic triangulated categories as the triangulated ``Grothendieck'' counterparts. In particular, the role of the categories of modules is now played by the derived categories of small dg-categories.
	
	It is well-known that triangulated categories are, however, not well-behaved enough in order to perform many relevant constructions (see, for example, \cite[\S1.1]{toen-lectures}). This can be overcome by the introduction of suitable higher-categorical enhancements, such as the aforementioned dg-categories, $A_\infty$-categories or $\infty$-categories. In this paper we aim to explore a suitable version of the Gabriel-Popescu theorem in an enhanced framework. We shall use \emph{pretriangulated dg-categories} (cf. \cite{bondal-kapranov-enhanced}) as our enhancements: they are indeed the most natural enhancements for derived categories and other triangulated categories of geometric interest. Moreover, they are naturally linear over any chosen commutative ring (or even dg-ring).

    We would like to think of a pretriangulated dg-category as a ``derived counterpart'' of an abelian category, hence an actual derived noncommutative space, but this is only a part of the story. Indeed, the world of triangulated categories (and their dg-models) does not seem \emph{per se} to communicate well with the ``classical (abelian) world''. The missing link we need is given by \emph{t-structures}. First introduced in \cite{beilinson-bernstein-deligne-perverse}, a t-structure on a triangulated category $\cat T$ is essentially the specification of an abelian subcategory $\cat T^\heartsuit$ (called the \emph{heart}) and ``cohomology functors'' $H^n \colon \cat T \to \cat T^\heartsuit$. Most triangulated categories of interest can be endowed with t-structures: a typical example is the derived categort $\dercomp(\mathfrak A)$ of an abelian category $\mathfrak A$, in which case $\dercomp(\mathfrak A)^\heartsuit \cong \mathfrak A$ and the cohomology functors are the usual ones. T-structures on pretriangulated dg-categories are simply understood as t-structures on their underlying triangulated categories.
    
    By endowing pretriangulated dg-categories with t-structures, we are really able to generalize results from the abelian to the derived framework. This can be seen as the actual shift from noncommutative spaces to derived noncommutative spaces. An important example of this program is the notion of \emph{derived injective}. Derived injectives are suitable derived versions of injective objects of abelian categories and they intrinsically depend on the presence of a t-structure. Just as objects in abelian categories (often) have injective resolutions, objects in pretriangulated dg-categories with t-structures (often) have ``derived injective resolutions'': these are extensively studied in \cite{genovese-lowen-vdb-dginj} and they will also be a crucial ingredient of the present work.
    
    The main goal of our paper is to generalize the Gabriel-Popescu theorem to this derived setting, where t-structures play the key role. We will need assumptions which make the given pretriangulated dg-category ``behave as a Grothendieck abelian category''; in particular, the heart of the t-structure will be Grothendieck. The prototypical example is given by the \emph{derived dg-category} $\dercompdg(\smallcat u)$ of a dg-category $\smallcat u$ which is concentrated in nonpositive degrees. Mirroring the classical Gabriel-Popescu theorem, we will describe any pretriangulated dg-category with such well-behaved t-structure as a t-exact localization of a dg-category of the form $\dercompdg(\smallcat u)$. 
    
    We remark that a similar endeavour has been already undertaken by Lurie \cite[\S C]{lurie-SAG} in the language of $\infty$-categories. In particular, he introduces \emph{(Grothendieck) pre-stable $\infty$-categories} as the key objects which generalize (Grothendieck) abelian categories and he proves an $\infty$-categorical Gabriel-Popescu theorem in that language \cite[Theorem C.2.1.6]{lurie-SAG}; this also yields a variant for presentable stable $\infty$-categories endowed with suitable t-structures \cite[Corollary C.2.1.10]{lurie-SAG}. Lurie's proofs and techniques could be quite directly translated to our language of dg-categories (with the appropriate changes), but we take a different approach. In \cite{mitchell-gabrielpopescu} Mitchell gives a very concise proof of the classical Gabriel-Popescu theorem where injective objects play a key role. We generalize that proof to our derived setup, and the aforementioned derived injectives will be pivotal.  
    
    We would like to point out that while the classical and the triangulated Gabriel-Popescu theorems provide a full characterization of the respective notions of ``Grothendieck categories'', our result only shows that the chosen family of ``Grothendieck categories'' can be written as a localization, but not that all localizations do necessarily belong to this family. The full characterization of the t-exact localizations of dg-categories of the form $\dercompdg(\smallcat u)$ will be addressed in future work.
    
    This \emph{derived Gabriel-Popescu theorem} will also be instrumental in an upcoming project where we attempt to generalize tensor products of Grothendieck abelian categories and well generated dg-categories (cf. \cite{lowen-julia-boris-tensorproduct} and \cite{lowen-julia-tensordg-wellgen}) to the setting of t-structures. In particular, inspired by the abelian framework and the techniques used in \cite{lowen-julia-boris-tensorproduct}, the definition of a suitable notion of ``t-dg-Grothendieck site'' is work in progress. 
	
	\subsection*{Structure of the paper} In \S\ref{sec:dgcategories-tstructures} we present the background and preliminary results that will be necessary for the rest of the paper. We first provide a concise survey on the basic concepts and results of the theory of dg-categories and quasi-functors. This is contained in \S\ref{subsec:dgcategories-quasifunctors}. We then review in \S\ref{sect:tstructures} the definition and essential properties of t-structures. More interestingly, we introduce the notion of \emph{t-monomorphism} and \emph{t-epimophism} and provide a ``t-epi/t-mono'' factorization in triangulated categories with a t-structure (cf. \S\ref{subsubsec:epimono}) which will be essential in the technical core of the paper. We close this preliminary section with a survey on derived injectives, contained in \S\ref{sect:dginj}. In particular, the relation between preservation of derived injectives and t-exactness (cf. \S\ref{subsubsec:texactness-dginj}) will be a key tool for our purposes. While the basic material on dg-categories and t-structures can be safely avoided by the expert reader and consulted afterwards if necessary, the main results regarding the ``t-epi/t-mono'' factorizations (cf. \Cref{prop:epimono_leftaisle}) and the relation between t-exactness and preservation of derived injectives (cf. \Cref{prop:rightadj_preserve_inj_leftadj_exact_abelian}) play an important role in the rest of the paper and are worth a closer look.
	
	As mentioned above, the main goal of this article is to provide a derived version of the Gabriel-Popescu theorem for a family of \emph{t-dg-categories} (pretriangulated dg-categories endowed with a t-structure) capturing the essence of being ``Grothendieck'' in this setting (in particular, the assumptions guarantee that the heart of the t-structure is a Grothendieck category). We determine this family in \S\ref{setup:GabrielPopescu} and we show that it contains the main geometrically-relevant instances of t-dg-categories, such as the enhancements of derived categories of Grothendieck categories or the dg-derived categories of small dg-categories concentrated in nonpositive degrees, in both cases endowed with their canonical t-structures (cf. \Cref{example:derivedcategory} and \Cref{example:dercat_Grothendieckabelian}). We then show that this family is subject to a derived Gabriel-Popescu theorem, which is the main result of this paper:
	
	\begin{theorem*}[\Cref{thm:GabrielPopescu}]
		Let $\cat A$ be a t-dg-category in our family (cf. \ref{setup:GabrielPopescu}). Let $\cat U$ denote the full dg-subcategory of generators of the t-structure on $\cat A$. Set $\smallcat u = \tau_{\leq 0}\cat U$ and $j:\smallcat u \to \cat A$ the natural morphism. Then, the quasi-functor
		\begin{equation*} 
			\begin{split}
				G \colon \cat A & \to \dercompdg(\smallcat u), \\
				A & \mapsto \cat A(j(-), A)
			\end{split}
		\end{equation*}
	is quasi-fully faithful and has a t-exact left adjoint quasi-functor
		\begin{equation*} 
				F \colon \dercompdg(\smallcat u) \to \cat A
		\end{equation*}
	such that $H^0(F)(\smallcat u(-,U)) \cong U$ in $H^0(\cat A)$, naturally in $U \in H^0(\smallcat u)$.
	\end{theorem*}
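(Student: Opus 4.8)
The plan is to adapt Mitchell's proof of the classical Gabriel-Popescu theorem, with the derived tensor product over $\smallcat u$ playing the role of Mitchell's realization functor and with derived injectives in the sense of \cite{genovese-lowen-vdb-dginj} replacing ordinary injective objects. First I would construct $F$ as the (essentially unique) colimit-preserving quasi-functor $\dercompdg(\smallcat u)\to\cat A$ extending $j$ along the dg-Yoneda embedding $\yon_{\smallcat u}\colon\smallcat u\to\dercompdg(\smallcat u)$, that is, $F(M)=M\lotimes_{\smallcat u}j$; this exists because $\cat A$ is cocomplete. The tensor--hom adjunction together with the dg-Yoneda lemma then yields $F\dashv G$ and $F(\yon_{\smallcat u}(U))\simeq j(U)=U$ in $H^0(\cat A)$, which is precisely the last assertion. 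I would also record two consequences of \ref{setup:GabrielPopescu}: (i) since $\smallcat u$ is concentrated in nonpositive degrees, $\dercompdg(\smallcat u)$ carries its canonical t-structure, and since the objects of $\cat U$ lie in the left aisle $\cat A^{\leq 0}$ a direct computation with the t-structure axioms shows that $G$ is left t-exact, hence $F$ is right t-exact; (ii) since $\cat U$ generates $\cat A$, the functor $G$ is conservative (if $\cat A(U,A)\simeq 0$ for all $U\in\cat U$ then $A\simeq 0$), and being exact it reflects isomorphisms.

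The first main point is that $F$ is in fact t-exact, i.e.\ also left t-exact. Here I would invoke \Cref{prop:rightadj_preserve_inj_leftadj_exact_abelian}: because $\cat A^{\heartsuit}$ is Grothendieck (so $\cat A$ has enough derived injectives) and $G$ is left t-exact, it suffices to show that $G$ sends derived injectives of $\cat A$ to derived injectives of $\dercompdg(\smallcat u)$. For a derived injective $E$ one tests $G(E)=\cat A(j(-),E)$, which already lies in $\dercompdg(\smallcat u)^{\geq 0}$, against objects $N$ of the heart of $\dercompdg(\smallcat u)$ by means of the adjunction $\dercompdg(\smallcat u)(N,G(E))\simeq\cat A(FN,E)$ and the structural description of derived injectives and of the two t-structures. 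This is the step that genuinely uses the hypotheses of \ref{setup:GabrielPopescu} — in particular exactness of filtered colimits in $\cat A$, which permits a reduction to a small generating family where the derived-injectivity of $E$ can be applied through its resolution-theoretic characterisation from \cite{genovese-lowen-vdb-dginj} — rather than a purely formal manipulation; I expect this to be the main obstacle, precisely because the naive computation would be circular: one cannot a priori place $FN$ in the heart of $\cat A$, as that is tantamount to the left t-exactness of $F$ one is trying to prove.

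It then remains to show that $G$ is quasi-fully faithful, equivalently that the counit $\epsilon_A\colon FG(A)\to A$ is an isomorphism in $H^0(\cat A)$ for every $A$. Using the t-epimorphism/t-monomorphism factorisation of \Cref{prop:epimono_leftaisle} — by which, in particular, a morphism that is simultaneously a t-epimorphism and a t-monomorphism is an isomorphism (its fibre lies in $\cat A^{\geq 1}$, whence its cofibre, being that fibre shifted by one, lies in $\cat A^{\geq 0}\cap\cat A^{\leq -1}=0$) — it is enough to prove that $\epsilon_A$ is both. For the t-epimorphism half I would use that $\cat U$ generates $\cat A$, so that every $A$ receives a t-epimorphism from a coproduct of shifts of objects of $\cat U$, together with the fact that $\epsilon_U$ is (split) surjective for $U\in\cat U$ — which follows from $F(\yon_{\smallcat u}(U))\simeq U$, the canonical map $\kappa_U\colon\yon_{\smallcat u}(U)\to G(U)$, and the triangle identities — and propagate this along the presentation to obtain a t-epimorphism for all $A$.

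For the t-monomorphism half I would run the derived version of Mitchell's kernel argument: the t-exactness of $F$ established above makes $H^0(F)$ exact, and one tests the truncation $\tau^{\leq 0}$ of the fibre of $\epsilon_A$ against derived injectives of $\cat A$ — using again that $\cat A^{\heartsuit}$ is Grothendieck and that $G$ is conservative — to conclude that this fibre lies in $\cat A^{\geq 1}$, i.e.\ that $\epsilon_A$ is a t-monomorphism. Combining these steps yields the theorem; the delicate inputs are confined to the preservation of derived injectives by $G$ in the second paragraph and, to a lesser extent, the t-monomorphism estimate in the last.
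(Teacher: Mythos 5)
Your outline reproduces the architecture of the paper's proof --- construct $F$ via the universal property of $\dercompdg(\smallcat u)$, check right t-exactness directly, obtain left t-exactness from preservation of derived injectives by $G$, then prove fully faithfulness --- and you correctly diagnose that the naive adjunction computation for ``$G$ preserves derived injectives'' is circular. But at exactly that point the proposal stops being a proof: the step you defer (``exactness of filtered colimits \dots\ permits a reduction to a small generating family where the derived-injectivity of $E$ can be applied'') is the entire mathematical content of the theorem, and you never say how the reduction works. What is needed is the derived Mitchell extension lemma (\Cref{lemma:Mitchell_extension}): for a submodule $M\subseteq\cat T(j(-),A)$ and a module map $f\colon M\to\cat T(j(-),B)$ with $B\in\cat T_{\geq 0}$, the induced $\varphi\colon\bigoplus_m U_m\to B$ factors through $\operatorname{Coim}(\psi)$, where $\psi\colon\bigoplus_m U_m\to A$. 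Proving this requires (i) the t-epi/t-mono factorization of \Cref{prop:epimono_leftaisle}, reducing to $\varphi\mu=0$ for $\mu\colon\tau_{\leq 0}\bigl(\cone(\psi)[-1]\bigr)\to\bigoplus_m U_m$; (ii) since $B\in\cat T_{\geq 0}$, this is detected on $H^0_t$, and one must identify $H^0_t\bigl(\tau_{\leq 0}(\cone(\psi)[-1])\bigr)$ with the filtered colimit of the corresponding objects over \emph{finite} sub-coproducts --- which in turn requires constructing a coherent directed system of cones in a strongly pretriangulated model (\Cref{lemma:filteredcolim_functorialcones}) and is where exactness of filtered colimits in the heart and compatibility of $H^0_t$ with coproducts are actually used; (iii) for each finite piece, the $H^0(\smallcat u)$-linearity of $f$ forces the relevant composite to vanish. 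Only then does derived injectivity of $E$ (via \Cref{coroll:dginj_extension_tmono} and the generalized Baer criterion) give that $H^0(G(E))$ is injective, and a further argument (\Cref{coroll:fullyfaithfulness_target_dginj}, used in \Cref{prop:preserve-dginj}) is needed to upgrade this to $G(E)$ being the derived injective $L(H^0(G(E)))$ --- an upgrade your sketch omits entirely.

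The same gap recurs in your fully-faithfulness step: the ``t-monomorphism half'' is again ``the derived version of Mitchell's kernel argument'', i.e.\ the very lemma you have not supplied. (Your t-epi/t-mono strategy for the counit is a legitimate variant of the paper's reduction to the heart via \Cref{coroll:fullyfaithfulness_target_rightaisle}, and the t-epi half does go through using that $\epsilon_U$ is a split epimorphism plus naturality --- but only after first reducing to $A\in\cat T_{\leq 0}$ via non-degeneracy and t-exactness of $F$, a reduction you need but do not mention, since t-epimorphisms and t-monomorphisms are only defined for morphisms out of $\cat T_{\leq 0}$. Also, your claim that $G$ is conservative because $\cat U$ generates is not immediate from \Cref{setup:GabrielPopescu}; in the paper it is a consequence of the Mitchell corollaries rather than an input.) In short: the skeleton is right and the obstruction is correctly located, but the load-bearing lemma is named rather than proved, so the argument is incomplete precisely where it matters.
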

	In particular, we show that one can deduce the classical Gabriel-Popescu theorem as a direct consequence of our main theorem (cf. \Cref{coroll:classicalGP}).
	
	The proof of the theorem is provided in \S\ref{sect:Gabriel-Popescu}. Given its complexity, we break it down in different steps. The existence of the left adjoint and its right t-exactness can be proven more or less directly (cf. \Cref{prop:existence-left-adjoint}, \Cref{prop:right-texact}). Our approach to the proof of the left t-exactness of $F$ follows a derived version of the proof of the classical Gabriel-Popescu theorem by Mitchell \cite{mitchell-gabrielpopescu}. More concretely, by providing a derived version of Mitchell's lemma (cf. \Cref{lemma:Mitchell_extension}), we are able to show in \Cref{prop:preserve-dginj} that $G$ preserves derived injectives. This is enough to obtain the left t-exactness by making use of our preliminary study on derived injectives and t-exactness  (cf. \Cref{prop:rightadj_preserve_inj_leftadj_exact_abelian}). In addition, the derived version of Mitchell's lemma also allows us to show the quasi-fully faithfulness of $G$ (cf. \cref{prop:rightadj-fully-faithful}), which concludes the proof.
	
	As a direct application of our result, we provide in \S\ref{subsec:application} an alternative proof of the uniqueness of dg-enhancements for derived categories of Grothendieck categories.
	
    During our initial investigations on derived injectives, we proved a Baer criterion for derived injectives on the derived category of a dg-algebra. Although in the end this result was not necessary for our proof, we believe it to be of independent interest and have therefore included it in an appendix (cf. \Cref{appendix:dginj}).
    
	\subsection*{\emph{Acknowledgements}} 
	The authors would like to thank Wendy Lowen and Michel Van den Bergh for interesting discussions. The first named author also thanks Alberto Canonaco and Paolo Stellari for interesting discussions around the application on uniqueness of dg-enhancements (see \S \ref{subsec:application}).
	
	\addtocontents{toc}{\protect\setcounter{tocdepth}{2}}
	\section{Dg-categories and t-structures}\label{sec:dgcategories-tstructures}
	We fix once and for all a base \emph{commutative dg-ring} $\basering k$, which we also assume to be strictly concentrated in nonpositive degrees. Unless otherwise specified our constructions are in the $\basering k$-linear context,\footnote{This will be the $H^0(\basering k)$-linear context for ordinary (and also triangulated) categories and the $H^*(\basering k)$-linear context for graded categories.} although we do not always say it. 
	
	We fix a Grothendieck universe $\mathfrak U$. From this point on, all categories are $\mathfrak U$-categories, small categories are $\mathfrak U$-small categories and small limits and colimits are $\mathfrak U$-small limits and colimits. We additionally fix two extra universes $\mathfrak U \in \mathfrak V$ and $\mathfrak V \in \mathfrak W$, such that for every $\mathfrak U$-small (dg) category its category of (dg) modules is $\mathfrak V$-small and for every $\mathfrak V$-small (dg) category its category of (dg) modules is $\mathfrak W$-small. From this point on, we refrain ourselves to indicate when an enlargement of the universe is required (from $\mathfrak U$ to $\mathfrak V$, or from $\mathfrak V$ to $\mathfrak W$), as this will be evident from the context.  
	\subsection{Dg-categories and quasi-functors}\label{subsec:dgcategories-quasifunctors}
	We shall work with $\basering k$-linear dg-categories, namely, categories enriched over dg-$\basering k$-modules. Their homotopy categories (resp. graded homotopy categories) will carry a natural $H^0(\basering k)$-linear (resp. $H^*(\basering k)$-linear) structure.  We refer to \cite[\S 2]{lunts-schnurer-smoothness-equivariant} for a specific account on $\basering k$-linear dg-categories and some of their homotopy-theoretical features; practically, everything works as in the case where $\basering k$ is an ordinary commutative ring. We also refer to \cite{keller-dgcat} for a general survey. Here, we recollect some notation and results we shall need throughout the paper.
	\subsubsection{Dg-modules} We denote by $\compdg(\basering k)$ the dg-category of $\basering k$-dg-modules. Let $\cat A$ be a small a dg-category. We define the dg-category of \emph{(right) $\cat A$-dg-modules} as:
	\[
	\compdg(\cat A) = \Fun_{\basering k}(\opp{\cat A}, \compdg(\basering k)),
	\]
    namely the dg-category of dg-functors $\opp{\cat A} \to \compdg(\basering k)$.
	
	The \emph{derived category $\dercomp(\cat A)$} is defined as the Verdier quotient of $H^0(\compdg(\cat A))$ by the acyclic dg-modules. A dg-model of $\dercomp(\cat A)$ is given by the dg-category $\hproj(\cat A)$ of \emph{h-projective} dg-modules, namely dg-modules $M \in \compdg(\cat A)$ such that $\compdg(\cat A)(M,-)$ preserves acyclic objects. Thus, we have an ($H^0(\basering k)$-linear) equivalence:
	\[
	H^0(\hproj(\cat A)) \cong \dercomp(\cat A)
	\]
	which we will sometimes view as an identification, implicitly taking suitable \emph{h-projective resolutions}, that is, quasi-isomorphisms of the form
	\[
	Q(M) \to M,
	\]
	for $M \in \compdg(\cat A)$, where $Q(M)$ is h-projective. We also sometimes use the following notation:
	\begin{equation} \label{eq:dercompdg_hproj}
	\dercompdg(\cat A) = \hproj(\cat A),
	\end{equation}
	interpreting $\hproj(\cat A)$ as the \emph{derived dg-category} of $\cat A$.
	
	Notice that for any $A \in \cat A$, the representable dg-module $\cat A(-, A)$ is h-projective by the dg-Yoneda lemma. So, the Yoneda embedding gives rise to a fully faithful dg-functor
	\[
	h_{\cat A} \colon \cat A \hookrightarrow \hproj(\cat A),
	\]
    which in turn induces the so-called \emph{derived Yoneda embedding}:
    \[
    H^0(\cat A) \hookrightarrow \dercomp(\cat A).
    \]

    \subsubsection{Pretriangulated dg-categories} \label{subsubsec:dgcat_pretr}
    Let $\cat A$ be a dg-category. We denote by $\pretr(\cat A)$ the smallest full dg-subcategory of $\compdg(\cat A)$ which contains all the representables $\cat A(-,A)$ and it is closed under taking shifts and mapping cones. We remark that the dg-category $\hproj(\cat A)$ is indeed closed under shifts and mapping cones and contains the representables, so the Yoneda embedding factors as follows:
    \[
    h_{\cat A} \colon \cat A \hookrightarrow \pretr(\cat A) \hookrightarrow \hproj(\cat A).
    \]
    We say that $\cat A$ is \emph{pretriangulated} if $\cat A \hookrightarrow \pretr(\cat A)$ is a quasi-equivalence; we say that it is \emph{strongly pretriangulated} if $\cat A \hookrightarrow \pretr(\cat A)$ is a ``strict'' dg-equivalence.  We refer the reader to the seminal work \cite{bondal-kapranov-enhanced} for the essential features of the theory of pretriangulated categories. 
    
    The dg-categories $\compdg(\cat A), \hproj(\cat A)$ and $\pretr(\cat A)$ are all strongly pretriangulated. In particular, if $\cat A$ is pretriangulated, we can replace it up to quasi-equivalence with $\pretr(\cat A)$, which is strongly pretriangulated.
    
    The crucial property of a pretriangulated dg-category $\cat A$ is that it has \emph{functorial shifts and cones} (of closed degree $0$ morphisms). Hence, with the obvious choice of shifts and distinguished triangles, the homotopy category $H^0(\cat A)$ becomes a triangulated category. If $\cat A$ is moreover strongly pretriangulated, its functorial shifts and cones exist up to ``strict'' isomorphism in $\cat A$. We now describe a variant of the key property of such functorial cones, in a specific way which will be suitable for our purposes.
    \begin{lemma} \label{lemma:functorialcones_strict}
        Let $\cat A$ be a strongly pretriangulated dg-category, and let
        \[
        \begin{tikzcd}
        A \arrow[r, "f"] \arrow[d, "u"] & B \arrow[d, "v"] \\
        A' \arrow[r, "f'"]              & B'              
        \end{tikzcd}
        \]
        be a \emph{strictly} commutative diagram of closed degree $0$ morphisms in $\cat A$. Let $\cone(f)$ and $\cone(f')$ be the functorial cones in $\cat A$ of respectively $f$ and $f'$. Then, there is a \emph{unique} closed degree $0$ morphism
        \[
        w \colon \cone(f) \to \cone(f')
        \]
        such that the following diagram (of closed degree $0$ morphisms) is strictly commutative:
        \[
        \begin{tikzcd}
        {\cone(f)[-1]} \arrow[r] \arrow[d, "{w[-1]}", dotted] & A \arrow[r, "f"] \arrow[d, "u"] & B \arrow[d, "v"] \arrow[r] & \cone(f) \arrow[d, "w", dotted] \\
        {\cone(f')[-1]} \arrow[r]                             & A' \arrow[r, "f'"]              & B' \arrow[r]               & \cone(f')                      
        \end{tikzcd}
        \]
        The rows of the above diagram induce distinguished triangles in $H^0(\cat A)$.
    \end{lemma}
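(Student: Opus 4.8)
The plan is to reduce to the explicit mapping-cone model and then argue by a short matrix computation. Since $\cat A$ is strongly pretriangulated, $\cat A \hookrightarrow \pretr(\cat A)$ is a dg-equivalence, so we may assume $\cat A = \pretr(\cat A)$; then the functorial cone of a closed degree $0$ morphism $f\colon A\to B$ is the twisted complex with underlying graded object $A[1]\oplus B$ and differential of the shape $\left(\begin{smallmatrix} d_{A[1]} & 0 \\ f & d_{B}\end{smallmatrix}\right)$ (once signs for the shift and the twist are fixed), the morphism $\iota\colon B\to\cone(f)$ appearing in the triangle is the split inclusion $\left(\begin{smallmatrix}0\\1\end{smallmatrix}\right)$, and $\cone(f)[-1]\to A$ is $\pm$ the $[-1]$-shift of the split projection $\pi\colon\cone(f)\to A[1]$; likewise for $f'$. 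With this model everything becomes a manipulation of $2\times 2$ block matrices with entries in $\cat A$.

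For existence I would take $w$ to be the image of the square $(u,v)$ under the functorial cone, i.e.\ the block-diagonal morphism $w=\left(\begin{smallmatrix} u[1] & 0\\ 0 & v\end{smallmatrix}\right)\colon A[1]\oplus B\to A'[1]\oplus B'$. It has degree $0$, and computing $d(w)$ block by block the two diagonal entries vanish because $u$ and $v$ are closed, while the only off-diagonal entry is $f'\circ u[1]-v\circ f$, which vanishes precisely because the given square commutes strictly. Strict commutativity of the whole diagram is then immediate from the block-diagonal shape of $w$: the identities $w\circ\iota=\iota'\circ v$ and $\pi'\circ w=u[1]\circ\pi$ (the latter being the left-hand square after applying $[-1]$) just read off the columns, respectively the top row, of $w$, and the middle square is the hypothesis. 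Finally the two rows are, by construction, the standard (rotated) triangles attached to the functorial cones, hence distinguished in $H^{0}(\cat A)$ for the usual triangulated structure.

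For uniqueness I would invoke the mapping property of the functorial cone: for any object $Z$, a closed degree $0$ morphism $\cone(f)\to Z$ is the datum of a closed degree $0$ morphism $B\to Z$ together with a nullhomotopy of its composite with $f$, recoverable as the composites of the morphism with $\iota$ and with $\pi$. Commutativity of the right-hand square forces the first datum to be $\iota'\circ v$, and commutativity of the left-hand square, read through $w[-1]$ and $\pi$, forces the second; together these pin down $w$, giving both the uniqueness and an alternative route to existence. I expect the only genuinely delicate point in the whole proof to be keeping consistent the signs produced by the shift and by the twist; once a coherent choice of conventions is fixed, each verification above is a one-line matrix check.
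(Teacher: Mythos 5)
Your existence argument is fine: in the twisted-complex model the block-diagonal morphism $w=\left(\begin{smallmatrix} u[1] & 0\\ 0 & v\end{smallmatrix}\right)$ is closed of degree $0$ exactly because $u,v$ are closed and the given square commutes strictly, and the displayed diagram commutes on the nose. (The paper states this lemma without proof, so there is nothing to compare against; your construction is the standard one.)

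The uniqueness step, however, has a genuine gap --- and in fact no argument can close it, because the uniqueness assertion is false as stated. Write $w\colon \cone(f)=A[1]\oplus B\to \cone(f')=A'[1]\oplus B'$ as a matrix $\left(\begin{smallmatrix} w_{11} & w_{12}\\ w_{21} & w_{22}\end{smallmatrix}\right)$. Strict commutativity of the right-hand square ($w\circ\iota=\iota'\circ v$) forces $w_{12}=0$ and $w_{22}=v$; strict commutativity of the left-hand square ($\pi'\circ w=u[1]\circ\pi$ after shifting) forces $w_{11}=u[1]$ and again $w_{12}=0$. Neither square constrains the corner entry $w_{21}\colon A[1]\to B'$, and closedness of $w$ only demands $d(w_{21})=\pm(vf-f'u)=0$, which is automatic. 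Concretely, for any cycle $h\in Z^{0}\bigl(\cat A(A[1],B')\bigr)=Z^{-1}\bigl(\cat A(A,B')\bigr)$ the morphism $w+\iota'\circ h\circ\pi$ is again closed of degree $0$ and makes the diagram strictly commute, since $\pi\circ\iota=0$ and $\pi'\circ\iota'=0$. This is precisely where your uniqueness argument breaks: you claim the left-hand square ``forces the second datum'' (the nullhomotopy component of $w$), but $\pi'\circ w$ only records the $A'[1]$-component of $w$ restricted to $A[1]$, not its $B'$-component. Uniqueness therefore fails whenever $Z^{-1}\cat A(A,B')\neq 0$; e.g.\ in $\dercompdg(\basering k)$ over a field take $A=\basering k$, $B=A'=0$, $B'=\basering k[1]$ with all maps zero: then $\cone(f)\cong\cone(f')\cong\basering k[1]$ and every scalar multiple of the identity satisfies the (vacuous) commutativity constraints. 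What is true, and what the paper actually needs later (the directed system of the $\alpha_{F,F'}$ in Lemma~\ref{lemma:filteredcolim_functorialcones}), is that the block-diagonal choice is \emph{strictly functorial} in the commutative square $(u,v)$, since $\operatorname{diag}(u'[1],v')\circ\operatorname{diag}(u[1],v)=\operatorname{diag}((u'u)[1],v'v)$; your explicit construction delivers exactly this, so the correct fix is to replace the uniqueness claim by this naturality statement, or to add the requirement that $w$ respect the canonical graded splittings of the cones.
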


    \subsubsection{Resolutions of dg-categories} For any dg-category $\cat A$, there is a dg-functor $Q(\cat A) \to \cat A$ with the following properties:
    \begin{itemize}
        \item It is a \emph{quasi-equivalence}, namely, it induces an equivalence of graded $H^*(\basering k)$-linear categories
        \[
        H^*(Q(\cat A)) \xrightarrow{\sim} H^*(\cat A).
        \]
        \item $Q(\cat A)$ is \emph{h-projective}, namely, it has h-projective hom complexes. This implies that it is also \emph{h-flat}, namely, $- \otimes_{\basering k} \cat A(A,B)$ preserves acyclic $\basering k$-dg-modules for all $A,B \in \Ob(\cat A)$.
    \end{itemize}
    This can be found, for example in \cite[\S3.5]{drinfeld-dgquotients}.
    $Q(\cat A) \to \cat A$ is called an \emph{h-projective (and hence h-flat) resolution} of $\cat A$. We can use it to define a \emph{derived tensor product} of dg-categories:
    \[
    \cat A \lotimes_{\basering k} \cat B = Q(\cat A) \otimes_{\basering k} \cat B \cong \cat A \otimes_{\basering k} Q(\cat B).
    \]
    The above isomorphisms are taken in the \emph{homotopy category of dg-categories} $\Hqe(\basering k)$, namely, the localization of the category of (small) dg-categories $\kat{dgCat}(\basering k)$ along quasi-equivalences: the derived tensor product is well defined there up to isomorphism. For an account on $\Hqe(\basering)$ we refer the reader to \cite{toen-dgcat-invmath}.
    
    \subsubsection{Dg-bimodules and quasi-functors} \label{subsubsec:bimod_qfun}
    Let $\cat A$ and $\cat B$ be dg-categories. An \emph{$\cat A$-$\cat B$-dg-bimodule} is by definition a dg-functor
    \[
    F \colon \opp{\cat B} \otimes_{\basering k} \cat A \to \compdg(\basering k),
    \]
    namely, an element of the dg-category $\compdg(\cat B \otimes \opp{\cat A})$. We shall often write
    \[
    F_A^B = F(B,A),
    \]
    with the convention ``contravariant-above, covariant-below''. Sometimes, we shall also denote by $\cat A$ or $h_{\cat A}$ or even $h$ the diagonal bimodule of a dg-category $\cat A$, namely:
    \[
    h_A^{A'} = \cat A_A^{A'} = \cat A(A',A).
    \]
    
    Now, assume that $\cat A$ or $\cat B$ is h-flat, if necessary taking an h-flat resolution, so that
    \[
    \cat B \lotimes_{\basering k} \opp{\cat A} = \cat B \otimes_{\basering k} \opp{\cat A}.
    \]
    An $\cat A$-$\cat B$-dg-bimodule $F \in \compdg(\cat B \otimes_{\basering k} \opp{\cat A})$ is called a \emph{quasi-functor} if it is \emph{right quasi-representable}, namely, for any $A \in \cat A$ there is an object $\Phi_F(A) \in \cat B$ and an isomorphism
    \[
    \cat B(-,\Phi_F(A)) \xrightarrow{\sim} F_A.
    \]
    We shall denote such quasi-functor as an arrow
    \[
    F \colon \cat A \to \cat B.
    \]
    Moreover, $F$ induces a graded $H^*(\basering k)$-linear functor
    \begin{align*}
        H^*(F) \colon H^*(\cat A) & \to H^*(\cat B), \\
        A & \mapsto \Phi_F(A).
    \end{align*}
    Analogously, it induces an $H^0(\basering k)$-linear functor
    \[
    H^0(F) \colon H^0(\cat A) \to H^0(\cat B).
    \]
    We say that $F$ is \emph{quasi-fully faithful} if $H^*(F)$ is a fully faithful graded functor. We say that it is \emph{quasi-essentially surjective} if $H^0(F)$ is essentially surjective. Finally, we say that it is a \emph{quasi-equivalence} if it is both quasi-fully faithful and quasi-essentially surjective, namely, if $H^*(F)$ is an equivalence of graded categories. Using the results in \cite{genovese-adjunctions} (in particular, the derived duality of bimodules), we can characterize quasi-equivalences as quasi-functors which admit suitable inverse quasi-functors.
    
    We shall denote by $\RHom(\cat A,\cat B)$ the full dg-subcategory of $\hproj(\cat B \lotimes_{\basering k} \opp{\cat A})$ spanned by the quasi-functors. This gives the internal hom of the symmetric monoidal category $(\Hqe(\basering k), \lotimes_{\basering k})$ (cf. \cite{toen-dgcat-invmath}, \cite{canonaco-stellari-internalhoms}). Sometimes, we shall abuse notation and identify quasi-functors with the objects of $\RHom(\cat A, \cat B)$, implicitly taking h-projective resolutions.
    \subsubsection{Adjoint quasi-functors} \label{subsubsec:qfun_adj}
    Let $\cat A$ and $\cat B$ be dg-categories, and let
    \[
    F \colon \cat A \rightleftarrows \cat B : G
    \]
    be quasi-functors. We say that \emph{$F$ is left adjoint to $G$}, in symbols $F \dashv G$, if the induced graded functors
    \[
    H^*(F) \colon H^*(\cat A) \rightleftarrows H^*(\cat B) : H^*(G)
    \]
    are part of an adjunction $H^*(F) \dashv H^*(G)$. Adjunctions of quasi-functors are reasonably well-behaved, in the sense that the existence of an adjoint at the graded level implies that there is actually an adjoint at the level of quasi-functors:
    \begin{lemma}[see also {\cite[Remark 3.9]{lowen-julia-tensordg-wellgen}}] \label{lemma:adjoints_quasifunctors_cohomology}
    Let $G \colon \cat B \to \cat A$ be a quasi-functor. Assume that $H^*(G) \colon H^*(\cat B) \to H^*(\cat A)$ has a left adjoint $L \colon H^*(\cat A) \to H^*(\cat B)$ as a graded functor. Then, $G$ has a left adjoint $F$  as a quasi-functor. Moreover, $F$ satisfies $H^*(F) \cong L$.
    
    If $\cat A$ and $\cat B$ are pretriangulated dg-categories, the above claim holds if we replace $H^*$ with $H^0$ everywhere.
    \end{lemma}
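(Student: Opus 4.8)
The plan is to obtain the lemma from the theory of derived duality of bimodules of \cite{genovese-adjunctions}, isolating the single point at which the cohomological hypothesis is used. First I would fix an h-projective representative of the bimodule underlying $G$, so that $G$ is an h-projective $\cat B$-$\cat A$-dg-bimodule which is right quasi-representable in the $\cat B$-variable: for each $B \in \cat B$ the right $\cat A$-dg-module $G_B = G(-,B)$ is quasi-isomorphic to $\cat A(-,\Phi_G(B))$. Dualizing $G$ with respect to its left $\cat B$-dg-module structure against the diagonal bimodule $h_{\cat B}$ (and taking a further h-projective resolution) produces an $\cat A$-$\cat B$-dg-bimodule $F$ whose $A$-component is the right $\cat B$-dg-module given by $F_A(B) \simeq \RHom_{\cat B}\big(G(A,-),\cat B(B,-)\big)$, where $G(A,-)$ and $\cat B(B,-)$ are regarded as left $\cat B$-dg-modules; equivalently, $F$ records the values on representables of the derived tensor functor $-\lotimes_{\cat A} G$. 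A priori $F$ is only a bimodule, and the content of the lemma is that the hypothesis forces it to be right quasi-representable, hence a quasi-functor $\cat A \to \cat B$.

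The key step, and the only place where the hypothesis enters, is to identify each left $\cat B$-dg-module $G(A,-)$ with a representable one. The left adjoint $L$ provides, for each $A$, an object $L(A) \in \cat B$ together with a unit morphism $\eta_A \colon A \to \Phi_G(L(A))$ in $H^*(\cat A)$; transporting $\eta_A$ along the quasi-isomorphism $\cat A(-,\Phi_G(L(A))) \simeq G_{L(A)}$ evaluated at $A$ yields a degree-$0$ cohomology class of $G(A,L(A))$, which by the derived Yoneda lemma for left $\cat B$-dg-modules represents a morphism $u_A \colon \cat B(L(A),-) \to G(A,-)$ in $\dercompdg$ of left $\cat B$-dg-modules. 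I would then check, by unwinding these identifications, that for each $B \in \cat B$ the map induced by $u_A$,
\[
H^*\big(\cat B(L(A),B)\big) \longrightarrow H^*\big(G(A,B)\big) \cong H^*\big(\cat A(A,\Phi_G(B))\big),
\]
is exactly the adjunction bijection for $L \dashv H^*(G)$, namely $\varphi \mapsto \Phi_G(\varphi)\circ\eta_A$; since this is an isomorphism in every degree, $u_A$ is a quasi-isomorphism. Dualizing back gives $F_A(B) \simeq \RHom_{\cat B}\big(\cat B(L(A),-),\cat B(B,-)\big) \simeq \cat B(B,L(A))$, i.e.\ $F_A \simeq \cat B(-,L(A))$; thus $F$ is right quasi-representable with $\Phi_F(A) = L(A)$, and $F \colon \cat A \to \cat B$ is a quasi-functor.

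It then remains to record that $F \dashv G$ and $H^*(F) \cong L$. The natural isomorphisms $H^*\big(\cat B(\Phi_F(A),B)\big) \cong H^*\big(\cat A(A,\Phi_G(B))\big)$ established in the previous step exhibit $H^*(F) = \Phi_F$ as a left adjoint of $H^*(G) = \Phi_G$, which is precisely $F \dashv G$; by uniqueness of adjoints $H^*(F) \cong L$. For the pretriangulated variant I would argue that the $H^0$- and $H^*$-formulations are equivalent: $H^0(\cat A)$ and $H^0(\cat B)$ are triangulated, $H^0(G)$ is exact, and since $H^n(\cat A)(X,Y) \cong H^0(\cat A)(X,Y[n])$ and likewise for $\cat B$, a graded left adjoint of $H^*(G)$ restricts to an ordinary left adjoint of $H^0(G)$, while conversely any ordinary left adjoint of the exact functor $H^0(G)$ automatically commutes with shifts and so promotes to a graded left adjoint of $H^*(G)$; the first case then applies verbatim.

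The step I expect to be the main obstacle is the middle paragraph: setting up the bimodule dual with the correct variances and h-projectivity so that the relevant $\RHom$-complexes compute what is claimed, and checking that $u_A$ is a quasi-isomorphism of complexes rather than merely a bijection on $H^0$. This is exactly the kind of bookkeeping that the derived-duality formalism of \cite{genovese-adjunctions} is designed to handle, so most of it can be imported; the genuinely new ingredient is the cohomological hypothesis, which is used only once, to select the objects $L(A)$ and units $\eta_A$ that make the $u_A$ into quasi-isomorphisms.
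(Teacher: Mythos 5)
Your proposal is correct and takes essentially the same route as the paper: the crux in both arguments is to use the graded adjunction (via its unit / the graded Yoneda lemma) to produce a degree-zero class in $H^0(G(A,L(A)))$ and then invoke the derived Yoneda lemma to upgrade it to a quasi-isomorphism $\cat B(L(A),-)\to G^A$, i.e.\ to show that $G$ is left quasi-representable. The only difference is that the paper then cites \cite[Proposition 7.1]{genovese-adjunctions} for the bimodule-duality bookkeeping that you carry out by hand, and both treatments dispose of the $H^0$ variant by the same shift argument.
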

    \begin{proof}
        $G \colon \cat B \to \cat A$ is a quasi-functor, hence for any $B \in \cat B$ we have an isomorphism
        \[
        \cat A(-,\Phi_G(B)) \xrightarrow{\sim} G_B
        \]
        in $\dercomp(\cat A)$. The mapping $B \mapsto \Phi_G(B)$ defines the graded functor $H^*(G)$. For $A \in \cat A$, we have by assumption an isomorphism
        \begin{equation} \label{eq:adjoints_quasifunctors_cohomology} \tag{$\ast$}
        H^*(\cat B)(L(A), -) \cong H^*(\cat A)(A, \Phi_G(-)) \cong H^*(G^A)
        \end{equation}
        of graded left $H^*(\cat B)$-modules. Thanks to the (graded) Yoneda lemma, this yields an element
        \[
        e \in H^0(G^A_{L(A)}).
        \]
        Then, applying the \emph{derived} Yoneda lemma, we deduce that $e$ yields an isomorphism in $\dercomp(\opp{\cat B})$, for all $A \in \cat A$:
        \[
        \cat B(L(A),-) \xrightarrow{\sim} G^A,
        \]
        which induces the above \eqref{eq:adjoints_quasifunctors_cohomology} when taking $H^*$. This means that $G$ is \emph{left quasi-representable}, hence by \cite[Proposition 7.1]{genovese-adjunctions} we know that $G$ has a left adjoint quasi-functor $F$. By construction, $H^*(F)$ is precisely $L$.
        
        The second part of the claim is immediate, since by assumption we are allowed to take shifts of objects in $\cat A$.
    \end{proof}
    
    \subsubsection{Induction and restriction quasi-functors} \label{subsubsec:ind_res}
    
    Let $\cat A$ and $\cat B$ be dg-categories, and let $F \colon \cat A \to \cat B$ be a dg-functor. Then, we have an \emph{induction dg-functor}
    \begin{equation*}
    \Ind_F \colon \compdg(\cat A) \to \compdg(\cat B), \\
    \end{equation*}
    which is left adjoint to the \emph{restriction dg-functor}
    \begin{equation*} 
    \begin{split}
        \Res_F \colon \compdg(\cat B) & \to \compdg(\cat A), \\
        M & \mapsto M \circ F.
    \end{split}
    \end{equation*}
    We refer to \cite{drinfeld-dgquotients} for the definition of $\Ind_F$ and we recall from there some of its relevant properties:
    \begin{itemize}
        \item $\Ind_F$ preserves representable dg-modules, namely, there is an isomorphism in $\compdg(\cat B)$:
        \[
        \Ind_F(\cat A(-,A)) \cong \cat B(-,F(A)),
        \]
        natural in $A \in \cat A$.
        \item $\Ind_F$ preserves h-projective modules and hence induces a dg-functor
        \begin{equation}
            \Ind_F \colon \dercompdg(\cat A) \to \dercompdg(\cat B),
        \end{equation}
        recalling notation \eqref{eq:dercompdg_hproj}.
        
        \item If $F$ is fully faithful, the same is true for $\Ind_F$; if $F$ is a quasi-equivalence, the same is true for $\Ind_F$. 
        
        \item If $F$ is a dg-equivalence with inverse $G$, then
        \begin{equation}
            \Ind_F \colon \dercompdg(\cat A) \rightleftarrows \dercompdg(\cat B) : \Ind_G
        \end{equation}
        are dg-equivalences, inverse to each other.
    \end{itemize}
    
    The restriction dg-functor $\Res_F$ induces a \emph{restriction quasi-functor}
    \begin{equation} \label{eq:restriction_quasifunctor}
        \Res_F \colon \dercompdg(\cat B) \to \dercompdg(\cat A).
    \end{equation}
    It is defined as the following dg-bimodule:
    \[
    (\Res_F)^N_M = \compdg(\cat A)(N, M \circ F),
    \]
    for $M \in \hproj(\cat B)$ and $N \in \hproj(\cat A)$. Since $N$ is h-projective, we have a quasi-isomorphism
    \[
    \hproj(\cat A)(N, Q(M \circ F)) \xrightarrow{\sim} \compdg(\cat A)(N, M \circ F) = (\Res_F)_M^N,
    \]
    natural in $N$, where $Q(M \circ F) \to M \circ F$ is an h-projective resolution. This proves that $\Res_F$ is indeed a quasi-functor $\hproj(\cat B) \to \hproj(\cat A)$.
    
    Moreover, it is clear that there is an adjunction of quasi-functors (cf. \S \ref{subsubsec:qfun_adj}):
    \begin{equation} \label{eq:indres_qfun_adj}
        \Ind_F \colon \dercompdg(\cat A) \rightleftarrows \dercompdg(\cat B) : \Res_F.
    \end{equation}
    \subsubsection{Dg-quotients and localizations} \label{subsubsec:dgquotients_localizations}
    The theory of dg-quotients was initiated by Keller in \cite{keller-cyclic} and further developed by Drinfeld in \cite{drinfeld-dgquotients}. Given a dg category $\cat A$ and a full dg-subcategory $\cat A' \subseteq \cat A$, the \emph{dg-quotient} $\cat A/ \cat A'$ is a dg-category satisfying the following universal property in $\Hqe(\basering k)$ (cf. \cite{tabuada-dgquotient}):
    \[
    \RHom(\cat A/\cat A',\cat B) \cong \RHom_{\cat A'}(\cat A,\cat B)
    \]
    for all $\cat B$ in $\Hqe(\basering k)$, where $\RHom_{\cat A'}(\cat A,\cat B)$ denotes the full dg-subcategory of $\RHom(\cat A,\cat B)$ given by the quasi-functors $F$ such that $H^0(F):H^0(\cat A) \to H^0(\cat B)$ annihilates the objects of $\cat A'$ (seen as objects in $H^0(\cat A)$). Given $\cat A' \subseteq \cat A$ as above, the dg-quotient $\cat A/\cat A'$ does exist. We refer the reader to \cite{drinfeld-dgquotients} for an explicit construction.
    
    In particular, the dg-quotient provides an enhancement of the Verdier quotient of triangulated categories. More concretely, we have the following:
    
    \begin{theorem}[{\cite[Thm 3.4]{drinfeld-dgquotients}}]
    Given a strongly pretriangulated dg-category $\cat A$ and a dg-subcategory $\cat A'$, we have that
    \[
    H^0(\cat A) / H^0(\cat A') \cong H^0(\cat A /\cat A').
    \]
    \end{theorem}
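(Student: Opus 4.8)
The plan is to produce the canonical comparison functor by hand and then verify that it is a triangle equivalence, using Drinfeld's explicit model of the dg-quotient \cite{drinfeld-dgquotients} for the one nonformal computation.

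First I would record that the dg-quotient of a strongly pretriangulated dg-category is again strongly pretriangulated: this is visible from Drinfeld's construction, which adjoins, for each $U \in \cat A'$, a degree $-1$ endomorphism $\epsilon_U$ with $d\epsilon_U = \mathrm{id}_U$, and can also be extracted from the universal property in $\Hqe(\basering k)$. Consequently $H^0(\cat A/\cat A')$ is triangulated and the canonical dg-functor $P \colon \cat A \to \cat A/\cat A'$ induces a triangle functor $H^0(P) \colon H^0(\cat A) \to H^0(\cat A/\cat A')$. Since each $\epsilon_U$ is a contracting homotopy, $\mathrm{id}_U = d\epsilon_U$ becomes $0$ in $H^0(\cat A/\cat A')$, so $H^0(P)$ annihilates the objects of $\cat A'$, hence also the thick subcategory they generate. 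By the universal property of the Verdier quotient this yields a canonical triangle functor
\[
\bar P \colon H^0(\cat A)/H^0(\cat A') \longrightarrow H^0(\cat A/\cat A').
\]
Essential surjectivity of $\bar P$ is then immediate, since Drinfeld's construction does not change the set of objects, so $H^0(P)$, and hence $\bar P$, is already essentially surjective on the nose.

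The substantial point is that $\bar P$ is fully faithful. I would first reduce to the case where $\cat A$ is h-flat (say h-projective) by passing to a resolution: this changes neither side up to equivalence, because the dg-quotient is invariant under quasi-equivalence and the Verdier quotient under triangle equivalence, and it is precisely the hypothesis under which Drinfeld's explicit construction computes the dg-quotient. Then I would use the explicit description of the hom-complexes of $\cat A/\cat A'$: a morphism $X \to Y$ is a $\basering k$-linear combination of alternating words $f_n \epsilon_{U_n} f_{n-1} \cdots \epsilon_{U_1} f_0$ with $U_i \in \cat A'$ and the $f_i$ morphisms of $\cat A$, the differential being that of $\cat A$ plus the ``contraction'' term coming from $d\epsilon_{U_i} = \mathrm{id}_{U_i}$, which lowers the number of $\epsilon$'s. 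Filtering $(\cat A/\cat A')(X,Y)$ by the number of $\epsilon$'s exhibits it as a colimit $\varinjlim_n F_n$ with $F_0 = \cat A(X,Y)$ and $n$-th subquotient a direct sum over $U_1, \dots, U_n \in \cat A'$ of the bar-type complexes $\cat A(U_n,Y) \otimes_{\basering k} \cat A(U_{n-1},U_n)[1] \otimes_{\basering k} \cdots \otimes_{\basering k} \cat A(X,U_1)[1]$ (h-flatness of $\cat A$ ensures these $\basering k$-tensor products compute derived tensor products, so Künneth applies). A spectral-sequence bookkeeping argument then identifies $H^0$ of this colimit with the set of equivalence classes of roofs $X \to Y' \xleftarrow{s} Y$ with $\cone(s)$ in the thick subcategory generated by $\cat A'$, which is exactly $\Hom_{H^0(\cat A)/H^0(\cat A')}(X,Y)$ via the calculus of fractions for the localization at such $s$. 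Tracing these identifications shows $\bar P$ is bijective on all hom-groups, so it is a triangle equivalence compatible with the localization functors out of $H^0(\cat A)$.

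The only delicate step is this last cohomological computation of $(\cat A/\cat A')(X,Y)$ and its matching with the fraction description of the Verdier quotient; everything preceding it is formal. An alternative to the explicit filtration, which I would keep in reserve, is to embed $\cat A \hookrightarrow \dercompdg(\cat A)$, realize $\cat A/\cat A'$ as a suitable full dg-subcategory of the localization of $\dercompdg(\cat A)$ that kills $\cat A'$, and transport the standard description of morphisms in that localized derived category; the crux, namely computing $\Hom$ in the quotient, is the same either way.
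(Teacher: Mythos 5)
The paper does not actually prove this statement; it is imported verbatim from Drinfeld \cite{drinfeld-dgquotients}, so your sketch has to stand on its own. Its formal skeleton is fine: $H^0(P)$ kills the objects of $\cat A'$ because $\epsilon_U$ is a contracting homotopy, the comparison functor $\bar P$ exists by the universal property of the Verdier quotient, and it is surjective on objects since Drinfeld's construction does not change objects. (One small overclaim: the dg-quotient of a strongly pretriangulated dg-category is not obviously \emph{strongly} pretriangulated, since strict cones of the newly adjoined closed morphisms are not supplied by the construction; but it is pretriangulated, which is all you use.)

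The genuine gap is the full-faithfulness step, which you correctly isolate and then dispatch as ``spectral-sequence bookkeeping''. Identifying $H^0$ of the $\epsilon$-filtered hom complex with the roof description of $\Hom$ in the Verdier quotient \emph{is} the theorem, not a bookkeeping exercise: the filtration by the number of $\epsilon$'s exhibits $(\cat A/\cat A')(X,Y)$ as, roughly, the cone of a two-sided bar complex over $\cat A'$ mapping to $\cat A(X,Y)$, and there is no natural map between the calculus-of-fractions description and any page of the resulting spectral sequence, so there is nothing to ``trace''. The way this is actually proved --- both by Drinfeld and in the ``alternative'' you keep in reserve, which is not an alternative but the proof --- is to embed $\cat A$ into a larger strongly pretriangulated category (semi-free modules, or $\dercompdg(\cat A)$) in which every object $Y$ admits a closed morphism $Y \to Y'$ with cone in the triangulated subcategory generated by $\cat A'$ and with $Y'$ right-orthogonal to $\cat A'$ in the homotopy category. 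The $\epsilon$-filtration is then used only for the local computation: when the target is such a $Y'$, each bar layer $\cat A(U_n,Y')\otimes \cdots$ is acyclic because its outer factor is, so $\cat A(X,Y') \to (\cat A/\cat A')(X,Y')$ is a quasi-isomorphism, while $(\cat A/\cat A')(X,Y) \to (\cat A/\cat A')(X,Y')$ is one because the cone of $Y \to Y'$ dies in the quotient. The identification with $\Hom$ in the Verdier quotient then comes from the standard theory of Bousfield localization applied to $H^0$ of the ambient category, not from roofs in $H^0(\cat A)$ itself. Without this resolution step your argument does not close.
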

    
    In the localization theory of triangulated categories, Bousfield localizations play an important role. A Bousfield localization of a triangulated category $\cat T$ is an exact functor $L: \cat T \to \cat T$ that can be realised as a composition
    \[
    \cat T \to \cat S \to \cat T
    \]
    where $\cat S$ is a replete subcategory of $\cat T$, the second functor is fully faithful and the first functor is the left adjoint of the second one \cite[Prop. 2.6.1]{krause-localizationtheory}. In this situation, $\cat S$ is actually equivalent to the Verdier quotient $\cat T/\ker(L)$ and the first functor is just the canonical quotient functor (see \cite[Prop 4.9.1]{krause-localizationtheory}). In particular, $\Img(L) \cong \cat S \cong \cat T/\cat S$.
    
    When we restrict to the case of a well generated triangulated category $\cat T$ in the sense of Neeman (cf. \cite[Def 8.1.4]{neeman-triangulated}), Verdier quotients and Bousfield localizations are equivalent approaches to the localization theory of $\cat T$. More explicitly, Bousfield localizations of $\cat T$ with kernel generated by a set of objects are in bijection with localizing subcategories of $\cat T$ generated by a set of objects \cite[Thm 7.2.1 \& Prop 5.2.1]{krause-localizationtheory}. One direction is provided by taking the composition of the Verdier quotient with its left adjoint (which exists because localizing subcategories are closed under small coproducts and the fact that Brown representability holds for well generated triangulated categories \cite[Thm 5.1.1]{krause-localizationtheory}). The other is given by taking the kernel of the localization functor, which is localizing. The generation by a set has to be imposed in order for the Verdier quotient to be well generated as well. 
    
    Bousfield localizations, as Verdier quotients, can also be enhanced to the dg-realm (see \cite[\S3.2]{lowen-julia-tensordg-wellgen}) as follows:
    
    \begin{definition}\label{def:dg_Bousfield_localization}
	Let $\cat A, \cat B$ be pretriangulated dg categories and $i: \cat B \to \cat A$ a quasi-fully faithful dg functor. We say that $i \in \RHom(\cat B, \cat A)$ is a \emph{dg Bousfield localization} of $\cat A$ if $H^0(i): H^0(\cat B) \hookrightarrow H^0(\cat A)$ admits a left adjoint. In view of \Cref{lemma:adjoints_quasifunctors_cohomology}, this is equivalent to saying that $i: \cat B \to \cat A$ has a left adjoint quasi-functor $G \in \RHom(\cat A, \cat B)$.
    \end{definition}

    \begin{remark}
    Observe that a dg-Bousfield localization induces a classical Bousfield localization of the corresponding underlying triangulated category $H^0(\cat A)$.
    \end{remark}
    \begin{remark}\label{remqlpretriang}
    Given a dg-Bousfield localization $i: \cat B \to \cat A$ with left adjoint quasi-functor $G: \cat A \to \cat B$, one has that $\cat B$ is isomorphic in $\Hqe(\basering k)$ to the dg-quotient $\cat A/ \ker(H^0(G))$ where we see $\ker(H^0(G))$ as a full dg-subcategory of $\cat A$ via the natural enhancement $\cat A$ of $H^0(\cat A)$.
    \end{remark}

    In parallel fashion to what happened in the triangulated framework, when we restrict ourselves to the realm of pretriangulated well generated dg-categories (i.e. pretriangulated dg categories whose underlying triangulated category is well generated), there is an order-reversing bijective correspondence between localizing subcategories and Bousfield localizations \cite[\S 3.2.3]{lowen-julia-tensordg-wellgen}. Namely, for a well generated dg category $\cat A$, there is a bijection between:
    \begin{itemize}
    	\item The set of localizing subcategories of $H^0(\cat A)$ generated by a set;
    	\item The set of dg-Bousfield localizations of $\cat A$ with the kernel of the left adjoint (at the $H^0$-level) generated by a set.
    \end{itemize} 

    \subsubsection{Truncations of dg-categories} \label{subsubsec:truncations_dgcat}
    Let $\cat A$ be a dg-category. We define a dg-category $\tau^{\leq 0} \cat A$ as follows: $\Ob \tau^{\leq 0} \cat A = \Ob \cat A$, and for any $A,B \in \Ob \cat A$ we set
    \begin{equation*}
        (\tau^{\leq 0} \cat A)(A,B) = \tau_{\leq 0} (\cat A(A,B)),
    \end{equation*}
    where $\tau_{\leq 0}$ is the (smart) truncation of $\basering k$-dg-modules. More explicitly:
    \begin{equation*}
    \begin{cases}
    (\tau^{\leq 0} \cat A)(A,B)^n = \cat A(A,B)^n & \text{if $n < 0$}, \\
    (\tau^{\leq 0} \cat A)(A,B)^n = 0 & \text{if $n > 0$}, \\
    (\tau^{\leq 0} \cat A)(A,B)^0 = Z^0(\cat A(A,B)).
    \end{cases}
    \end{equation*}
    There is a natural dg-functor
    \begin{equation*}
    i^{\leq 0} \colon \tau^{\leq 0} \cat A \rightarrow \cat A,
    \end{equation*}
    which is the identity on objects and the inclusion map $\tau_{\leq 0} \cat A(A,B) \hookrightarrow \cat A(A,B)$ on hom-complexes.
    
	\subsection{t-structures on dg-categories}\label{sect:tstructures}
	t-structures on dg-categories are understood as t-structures on their homotopy categories.
    \begin{definition} \label{def:tstruct_dgcat}
    A \emph{t-structure} on a pretriangulated dg-category $\cat A$ is a t-structure on the homotopy category $H^0(\cat A)$ (see \cite{beilinson-bernstein-deligne-perverse}). 
    
    We shall denote by $\cat A_{\leq n}$ and $\cat A_{\geq n}$ the full
    dg-subcategories of $\cat A$ whose objects are the same as the
    aisles $H^0(\cat A)_{\leq n}$ and $H^0(\cat A)_{\geq n}$, so that
    \begin{align*}
        H^0(\cat A_{\leq n}) &= H^0(\cat A)_{\leq n}, \\
        H^0(\cat A_{\geq n}) &= H^0(\cat A)_{\geq n}.
    \end{align*}
    Such t-structure on $\cat A$ will often be denoted by the pair $(\cat A_{\leq 0}, \cat A_{\geq 0})$.
    
    A pretriangulated dg-category $\cat A$ with a t-structure $(\cat A_{\leq 0}, \cat A_{\geq 0})$ will be called a \emph{t-dg-category}.
    \end{definition}
    \begin{remark}
        Let $\cat A$ be a t-dg-category. We shall denote by
        \[
        H^0_t \colon H^0(\cat A) \to H^0(\cat A)^\heartsuit
        \]
        the cohomological functor associated to the t-structure. We also set
        \[
        H^n_t(-) = H^0_t(-[n]).
        \]
        We shall often ease notation and write $H^n$ instead of $H^n_t$.
    \end{remark}
    \begin{definition}
        Let $F \colon \cat A \to \cat B$ be a quasi-functor between t-dg-categories. We say that $F$ is \emph{left t-exact} (resp. \emph{right t-exact}) if $H^0(F)$ has this property, namely, it maps $H^0(\cat A_{\geq 0})$ to $H^0(\cat B_{\geq 0})$ (resp. it maps $H^0(\cat A_{\leq 0})$ to $H^0(\cat B_{\leq 0})$).
        
        We also say that $F$ is \emph{t-exact} if it is both left and right t-exact.
    \end{definition}
    A prototypical example of t-dg-categories is given by dg-derived categories of \emph{dg-categories concentrated in nonpositive degrees}.
    \begin{proposition} \label{prop:dercomp_naturaltstruct}
    Let $\smallcat a$ be a ($\basering k$-linear) dg-category concentrated in nonpositive degrees, namely, such that
    \begin{equation} \label{eq:dgcat_nonpositive}
    H^i(\smallcat a(A,B))=0, \qquad i>0,
    \end{equation}
    for all objects $A, B \in \smallcat a$. Then, the derived dg-category $\dercompdg(\smallcat a)=\hproj(\smallcat a)$ has a t-structure such that:
    \begin{align*}
        \dercompdg(\smallcat a)_{\leq 0} &= \{ M \in \dercompdg(\smallcat a) : H^i(M) = 0 \ \forall\, i > 0 \}, \\
        \dercompdg(\smallcat a)_{\geq 0} &= \{ M \in \dercompdg(\smallcat a) : H^i(M) = 0 \ \forall\, i < 0 \}.
    \end{align*}
    
    This t-structure will be referred to as \emph{natural} or \emph{canonical}. As a t-dg-category, $\dercompdg(\smallcat a)$ will be always endowed with this t-structure, unless otherwise specified. The heart $H^0(\dercompdg(\smallcat a))^\heartsuit = \dercomp(\smallcat a)^\heartsuit$ is identified with the abelian ($H^0(\basering k)$-linear) category $\Mod(H^0(\smallcat a))$ of right $H^0(\smallcat a)$-modules.
    \end{proposition}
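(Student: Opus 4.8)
The plan is to realise the truncation functors via the smart truncation of dg-modules and to verify the t-structure axioms directly; the nonpositivity hypothesis \eqref{eq:dgcat_nonpositive} enters at precisely two points. First I would reduce to the case in which $\smallcat a$ is \emph{strictly} concentrated in nonpositive degrees. Under \eqref{eq:dgcat_nonpositive} the dg-functor $i^{\leq 0}\colon \tau^{\leq 0}\smallcat a \to \smallcat a$ of \S\ref{subsubsec:truncations_dgcat} is a quasi-equivalence, so by \S\ref{subsubsec:ind_res} the induction quasi-functor $\Ind_{i^{\leq 0}} \colon \dercompdg(\tau^{\leq 0}\smallcat a) \to \dercompdg(\smallcat a)$ is a quasi-equivalence, with quasi-inverse the restriction $\Res_{i^{\leq 0}}$. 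Since $\Res_{i^{\leq 0}}$ is precomposition along an identity-on-objects dg-functor, it does not alter the underlying $\basering k$-complex of a dg-module, hence preserves cohomology and matches the candidate aisles on the two sides, and $H^0(\tau^{\leq 0}\smallcat a) = H^0(\smallcat a)$. So I may assume $\smallcat a^{i} = 0$ for $i > 0$.

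Next I would introduce $\tau_{\leq n}, \tau_{\geq n} \colon \compdg(\smallcat a) \to \compdg(\smallcat a)$ by applying the smart truncation of $\basering k$-dg-modules objectwise. Strict nonpositivity is used here to see that these objectwise truncations are stable under the $\smallcat a$-action: for $m \in Z^0(M(A))$ and $a \in \smallcat a(B,A)^{0}$ one has $d(m \cdot a) = \pm\, m \cdot da = 0$ since $da \in \smallcat a(B,A)^{1} = 0$, so $\tau_{\leq 0}M$ is a sub-dg-module of $M$ with quotient dg-module $\tau_{\geq 1}M$ (and likewise in each degree). The resulting short exact sequence $0 \to \tau_{\leq n}M \to M \to \tau_{\geq n+1}M \to 0$ of dg-modules induces a triangle $\tau_{\leq n}M \to M \to \tau_{\geq n+1}M \to \tau_{\leq n}M[1]$ in $\dercomp(\smallcat a)$, with $H^i(\tau_{\leq n}M) = H^i(M)$ for $i \leq n$ and $0$ otherwise, and dually for $\tau_{\geq n+1}M$. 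This yields the decomposition triangles required for a t-structure (although $\tau_{\leq n}M$ need not be h-projective when $M$ is, the candidate aisles are cut out by a quasi-isomorphism-invariant condition, so we may work inside $\dercomp(\smallcat a) \cong H^0(\hproj(\smallcat a))$), together with the evident shift-stability $\dercompdg(\smallcat a)_{\leq 0}[1] \subseteq \dercompdg(\smallcat a)_{\leq 0}$ and $\dercompdg(\smallcat a)_{\geq 0}[-1] \subseteq \dercompdg(\smallcat a)_{\geq 0}$.

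The remaining axiom is the orthogonality $\Hom_{\dercomp(\smallcat a)}(M, N) = 0$ for $M \in \dercompdg(\smallcat a)_{\leq 0}$ and $N \in \dercompdg(\smallcat a)_{\geq 1}$, which I expect to be the main point and which is where nonpositivity is used a second time. Replacing $M$ by $\tau_{\leq 0}M$ and $N$ by $\tau_{\geq 1}N$ (quasi-isomorphisms, by the defining cohomology conditions) I may assume $M$ concentrated in degrees $\leq 0$ and $N$ in degrees $\geq 1$. Since $\smallcat a$, $M$ and the base ring $\basering k$ are all concentrated in nonpositive degrees, one can choose an h-projective resolution $P \xrightarrow{\sim} M$ with $P$ concentrated in degrees $\leq 0$; then every closed degree-$0$ morphism $P \to N$ vanishes for degree reasons, whence $\Hom_{\dercomp(\smallcat a)}(M, N) = H^0\bigl(\compdg(\smallcat a)(P, N)\bigr) = 0$. (Alternatively, one could invoke the existence of the aisle generated by the set $\{\smallcat a(-, A)[i]\}_{A,\, i \geq 0}$ and identify it with $\dercompdg(\smallcat a)_{\leq 0}$ using the truncation triangles above.) This establishes the t-structure with the stated aisles.

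Finally, for the heart $\dercompdg(\smallcat a)^\heartsuit = \dercompdg(\smallcat a)_{\leq 0} \cap \dercompdg(\smallcat a)_{\geq 0}$ --- the dg-modules with cohomology concentrated in degree $0$ --- I would use the canonical projection dg-functor $\pi \colon \smallcat a \to H^0(\smallcat a)$, well defined since $H^0(\smallcat a(A,B)) = \smallcat a(A,B)^{0}/d\,\smallcat a(A,B)^{-1}$ is a quotient of $\smallcat a(A,B)^{0}$ because $\smallcat a^{i}=0$ for $i>0$, viewing $H^0(\smallcat a)$ as a dg-category concentrated in degree $0$. Restriction $\Res_\pi$ sends an ordinary $H^0(\smallcat a)$-module to a dg-module concentrated in degree $0$, hence into the heart, and I would check that $\Res_\pi \colon \Mod(H^0(\smallcat a)) \to \dercompdg(\smallcat a)^\heartsuit$ and $M \mapsto H^0(M) \colon \dercompdg(\smallcat a)^\heartsuit \to \Mod(H^0(\smallcat a))$ are mutually quasi-inverse: $H^0 \circ \Res_\pi = \mathrm{id}$ is immediate, and for $M$ in the heart the natural zig-zag of quasi-isomorphisms $M \xleftarrow{\sim} \tau_{\leq 0}M \xrightarrow{\sim} \tau_{\geq 0}\tau_{\leq 0}M = \Res_\pi(H^0 M)$ --- the target being concentrated in degree $0$ --- exhibits $M \cong \Res_\pi(H^0 M)$ in $\dercomp(\smallcat a)$, naturally in $M$. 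Being an equivalence of categories with abelian target, this is automatically exact and $H^0(\basering k)$-linear, yielding the asserted identification $\dercompdg(\smallcat a)^\heartsuit \cong \Mod(H^0(\smallcat a))$.
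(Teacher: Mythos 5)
Your proof is correct and is essentially the argument the paper delegates to its citation of Amiot's [Lem 2.2, Prop 2.3]: after reducing to a strictly nonpositively graded $\smallcat a$, objectwise smart truncations supply the decomposition triangles, orthogonality follows from the existence of h-projective resolutions concentrated in degrees $\leq 0$, and the heart is identified via restriction along $\smallcat a \to H^0(\smallcat a)$. The only step you assert rather than prove --- that a dg-module with vanishing positive cohomology admits a semifree resolution built from cells $\smallcat a(-,A)[i]$ with $i \geq 0$ --- is precisely the content of the cited lemma and follows from the standard inductive cell-attachment construction, so nothing essential is missing.
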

    \begin{proof}
    This can be proved by adapting \cite[Lem 2.2, Prop 2.3]{amiot-cluster}.
    \end{proof}
	\begin{remark}
    Since our base dg-ring $\basering k$ is by assumption (strictly) concentrated in nonpositive degrees, the derived dg-category $\dercompdg(\basering k)$ has the natural t-structure described in the above Proposition \ref{prop:dercomp_naturaltstruct}.
    \end{remark}

	\subsubsection{Exact functors and t-structures}
	We now temporarily forget about dg-categories and discuss a few well-known results (cf. \cite[Proposition 1.3.17]{beilinson-bernstein-deligne-perverse}) which involve only triangulated categories endowed with t-structures and triangulated functors between them. Clearly, everything can be immediately generalized to t-dg-categories.
	\begin{proposition} \label{prop:induced_functor_heart}
		Let $\cat T$ and $\cat S$ be ($H^0(\basering k$)-linear) triangulated categories endowed with t-structures $(\cat T_{\leq 0}, \cat T_{\geq 0})$ and $(\cat S_{\leq 0}, \cat S_{\geq 0})$. Let $F \colon \cat T \to \cat S$ be a triangulated functor. There is an induced functor
		\begin{equation}
			\begin{split}
				F^\heartsuit & \colon \cat T^\heartsuit  \to \cat S^\heartsuit, \\
				A & \mapsto H^0(F(A)).
			\end{split}
		\end{equation}
		This formula simplifies to $\tau_{\geq 0} F(A)$ ($\tau_{\leq 0} F(A)$) is $F$ if right (left) t-exact -- in which case $F^{\heartsuit}$ is a right (left) exact functor between abelian categories.
	\end{proposition}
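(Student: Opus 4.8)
The plan is to construct $F^\heartsuit$ directly as the restriction to the heart of the composite $H^0_t \circ F \colon \cat T \to \cat S^\heartsuit$. So I would set $F^\heartsuit(A) = H^0_t(F(A))$ on objects $A \in \cat T^\heartsuit$ and $F^\heartsuit(f) = H^0_t(F(f))$ on morphisms; since both $F$ (being triangulated) and $H^0_t$ (being a cohomological functor) are additive, $F^\heartsuit$ is automatically an additive functor between abelian categories, and this is all the first assertion requires.

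For the simplification clause I would argue as follows. If $F$ is right t-exact, then $F(\cat T_{\leq 0}) \subseteq \cat S_{\leq 0}$; since $\cat T^\heartsuit \subseteq \cat T_{\leq 0}$, every $A \in \cat T^\heartsuit$ has $F(A) \in \cat S_{\leq 0}$, so $\tau_{\leq 0} F(A) = F(A)$ and hence $F^\heartsuit(A) = H^0_t(F(A)) = \tau_{\geq 0}\tau_{\leq 0} F(A) = \tau_{\geq 0} F(A)$. The left t-exact case is handled symmetrically, using $\cat T^\heartsuit \subseteq \cat T_{\geq 0}$, $F(\cat T_{\geq 0}) \subseteq \cat S_{\geq 0}$, and the identity $H^0_t = \tau_{\leq 0}\tau_{\geq 0}$.

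The last point — right (resp. left) exactness of $F^\heartsuit$ — I would deduce from the long exact cohomology sequence. Starting from a short exact sequence $0 \to A' \to A \to A'' \to 0$ in $\cat T^\heartsuit$, I would invoke the standard fact that it arises from a distinguished triangle $A' \to A \to A'' \to A'[1]$ in $\cat T$ with $A', A, A''$ all in $\cat T^\heartsuit$; applying the triangulated functor $F$ gives a distinguished triangle in $\cat S$, and applying the cohomological functor $H^0_t$ yields a long exact sequence $\cdots \to H^n_t(F(A')) \to H^n_t(F(A)) \to H^n_t(F(A'')) \to H^{n+1}_t(F(A')) \to \cdots$ in $\cat S^\heartsuit$. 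If $F$ is right t-exact, then $F(A'), F(A), F(A'')$ lie in $\cat S_{\leq 0}$, so $H^n_t$ of each vanishes for $n > 0$ and the sequence around degree $0$ collapses to $H^{-1}_t(F(A'')) \to F^\heartsuit(A') \to F^\heartsuit(A) \to F^\heartsuit(A'') \to 0$, which is exactly right exactness of $F^\heartsuit$. Dually, if $F$ is left t-exact those cohomologies vanish for $n < 0$ and the sequence collapses to $0 \to F^\heartsuit(A') \to F^\heartsuit(A) \to F^\heartsuit(A'') \to H^1_t(F(A'))$, giving left exactness.

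I do not expect a genuine obstacle: the argument is formal given the foundational facts about t-structures from \cite[\S1.3]{beilinson-bernstein-deligne-perverse} — that $H^0_t = \tau_{\geq 0}\tau_{\leq 0} \cong \tau_{\leq 0}\tau_{\geq 0}$ is a cohomological functor to the heart, that distinguished triangles produce long exact cohomology sequences, and that short exact sequences in the heart correspond to distinguished triangles supported in the heart. The only place needing a little care is the index bookkeeping in the long exact sequence, i.e. checking that the vanishing range supplied by one-sided t-exactness is precisely what truncates the sequence to the claimed one-sided exactness.
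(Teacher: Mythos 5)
Your proposal is correct and follows essentially the same route as the paper: construct $F^\heartsuit = H^0_t \circ F$ on the heart, and deduce one-sided exactness from the long exact cohomology sequence of the distinguished triangle $F(A') \to F(A) \to F(A'')$, using the vanishing of $H^1_t(F(A'))$ (resp.\ $H^{-1}_t(F(A''))$) supplied by right (resp.\ left) t-exactness. Your treatment of the simplification clause $F^\heartsuit(A) = \tau_{\geq 0}F(A)$ is a detail the paper leaves implicit, and your index bookkeeping is right.
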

	\begin{proof}
	    Let
	    \[
	    0 \to A \to B \to C \to 0
	    \]
	    be a short exact sequence in $\cat T^\heartsuit$. This induces a distinguished triangle in $\cat T$, and hence a distinguished triangle
	    \[
	    F(A) \to F(B) \to F(C).
	    \]
	    We take the cohomological long exact sequence in $\cat S^\heartsuit$:
	    \[
	    H^{-1}(F(C)) \to H^0(F(A)) \to H^0(F(B)) \to H^0(F(C)) \to H^1(F(A)).
	    \]
	    If $F$ is right t-exact, then $H^1(F(A))\cong 0$; if it is left t-exact, then $H^{-1}(F(C))\cong 0$. From these observations, we easily conclude.
	\end{proof}
	\begin{proposition}\label{prop:left-right-texact}
		Let $\cat T$ and $\cat S$ be triangulated categories endowed with t-structures $(\cat T_{\leq 0}, \cat T_{\geq 0})$ and $(\cat S_{\leq 0}, \cat S_{\geq 0})$, and let $F \dashv G \colon \cat T \rightleftarrows \cat S$ be an adjunctions of triangulated functors. Then, $F$ is right t-exact if and only if $G$ is left t-exact.
	\end{proposition}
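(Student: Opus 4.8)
The plan is to reduce the statement to the orthogonality that defines a t-structure. Recall that $(\cat T_{\leq 0},\cat T_{\geq 0})$ satisfies $\cat T_{\leq 0} = {}^{\perp}\cat T_{\geq 1}$ and $\cat T_{\geq 1} = \cat T_{\leq 0}^{\perp}$, where $\cat T_{\geq 1} = \cat T_{\geq 0}[-1]$; in particular $\Hom_{\cat T}(X,Y) = 0$ whenever $X \in \cat T_{\leq 0}$ and $Y \in \cat T_{\geq 1}$, and similarly in $\cat S$. Since $F$ and $G$ are triangulated they commute with the shift, and since $\cat T_{\leq n} = \cat T_{\leq 0}[-n]$, $\cat T_{\geq n} = \cat T_{\geq 0}[-n]$ (and likewise in $\cat S$), the inclusion $F(\cat T_{\leq 0}) \subseteq \cat S_{\leq 0}$ is equivalent to $F(\cat T_{\leq n}) \subseteq \cat S_{\leq n}$ for all $n$, and $G(\cat S_{\geq 0}) \subseteq \cat T_{\geq 0}$ is equivalent to $G(\cat S_{\geq 1}) \subseteq \cat T_{\geq 1}$. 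So it suffices to prove $F(\cat T_{\leq 0}) \subseteq \cat S_{\leq 0}$ if and only if $G(\cat S_{\geq 1}) \subseteq \cat T_{\geq 1}$.

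For the forward direction I would assume $F$ right t-exact and fix $Y \in \cat S_{\geq 1}$. For every $X \in \cat T_{\leq 0}$ the adjunction isomorphism gives
\[
\Hom_{\cat T}(X, G(Y)) \cong \Hom_{\cat S}(F(X), Y),
\]
and the right-hand side vanishes because $F(X) \in \cat S_{\leq 0}$ and $Y \in \cat S_{\geq 1}$. Hence $G(Y)$ is right-orthogonal to all of $\cat T_{\leq 0}$, i.e. $G(Y) \in \cat T_{\leq 0}^{\perp} = \cat T_{\geq 1}$, which is exactly left t-exactness of $G$.

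The converse is the mirror argument: assuming $G$ left t-exact, fix $X \in \cat T_{\leq 0}$; for every $Y \in \cat S_{\geq 1}$ the same adjunction isomorphism shows $\Hom_{\cat S}(F(X), Y) \cong \Hom_{\cat T}(X, G(Y)) = 0$, using $G(Y) \in \cat T_{\geq 1}$ and $X \in \cat T_{\leq 0}$; therefore $F(X) \in {}^{\perp}\cat S_{\geq 1} = \cat S_{\leq 0}$, so $F$ is right t-exact. The argument is entirely formal, so I do not expect a real obstacle; the only point requiring a little care is the bookkeeping with shifts that allows one to check the aisle inclusions in a single degree only, which is legitimate precisely because $F$ and $G$ are exact.
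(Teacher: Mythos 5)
Your proof is correct and follows essentially the same route as the paper's: both directions are reduced, via the adjunction isomorphism and shift bookkeeping, to the orthogonality $\Hom(X,Y)=0$ for $X \in \cat S_{\leq 0}$, $Y \in \cat S_{\geq 1}$ that characterizes the aisles. The only difference is that you write out the converse explicitly where the paper says it is "proved in the same fashion."
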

	\begin{proof}
	    Assume that $G$ is left t-exact, namely $G(\cat S_{\geq 0}) \subseteq \cat T_{\geq 0}$. Playing with shifts, we notice that this implies that $G(\cat S_{\geq n}) \subseteq \cat T_{\geq n}$ for all $n \in \mathbb Z$. Now, let $A \in \cat T_{\leq 0}$. Recall that $F(A) \in \cat S_{\leq 0}$ is equivalent to
    \begin{equation*}
    \cat S(F(A),B) = 0, \quad \forall\, B \in \cat S_{>0}.
    \end{equation*}
    On the other hand, for any $B \in \cat S_{>0}$, we have
    \begin{equation*}
    \cat S(F(A),B) \cong \cat T(A,G(B)) \cong 0,
    \end{equation*}
    since by hypothesis $G(B) \in \cat T_{>0}$. The other implication is proved in the same fashion.
	\end{proof}
	\begin{proposition} \label{prop:induced_adjunction_hearts}
		Let $F \dashv G \colon \cat T \rightleftarrows \cat S$ be an adjunctions of triangulated functors, and assume that $F$ is right t-exact (equivalently, $G$ is left t-exact). Then, there is an induced adjunction
		\begin{equation}
			F^\heartsuit \dashv G^\heartsuit \colon \cat T^\heartsuit \rightleftarrows \cat S^\heartsuit.
		\end{equation}
	\end{proposition}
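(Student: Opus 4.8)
The plan is to obtain $F^\heartsuit \dashv G^\heartsuit$ directly from the given adjunction $F \dashv G$ by inserting ``truncation'' isomorphisms on either side. First recall from \Cref{prop:induced_functor_heart} (and \Cref{prop:left-right-texact}, so that $G$ is left t-exact) that on objects $F^\heartsuit(A) = H^0(F(A)) = \tau_{\geq 0} F(A)$ for $A \in \cat T^\heartsuit \subseteq \cat T_{\leq 0}$ -- the last equality because $F(A) \in \cat S_{\leq 0}$ -- and dually $G^\heartsuit(B) = H^0(G(B)) = \tau_{\leq 0} G(B)$ for $B \in \cat S^\heartsuit \subseteq \cat S_{\geq 0}$, using $G(B) \in \cat T_{\geq 0}$; note that $\tau_{\geq 0}F(A)$ indeed lies in $\cat S^\heartsuit$ and $\tau_{\leq 0}G(B)$ in $\cat T^\heartsuit$.

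The technical step I would isolate is a pair of elementary truncation isomorphisms, valid in any triangulated category $\cat S$ with a t-structure. For $X \in \cat S_{\leq 0}$ and $Y \in \cat S^\heartsuit$, applying $\cat S(-,Y)$ to the truncation triangle $\tau_{\leq -1}X \to X \to \tau_{\geq 0}X \to (\tau_{\leq -1}X)[1]$ and using that both $\tau_{\leq -1}X$ and $(\tau_{\leq -1}X)[1]$ lie in $\cat S_{\leq -1}$, which is left-orthogonal to $\cat S_{\geq 0} \ni Y$ (this is the reindexed t-structure orthogonality $\cat S(\cat S_{\leq 0}, \cat S_{\geq 1}) = 0$), gives a natural isomorphism $\cat S(X, Y) \cong \cat S(\tau_{\geq 0}X, Y)$. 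Dually, for $X \in \cat S^\heartsuit$ and $Y \in \cat S_{\geq 0}$, applying $\cat S(X,-)$ to $\tau_{\leq 0}Y \to Y \to \tau_{\geq 1}Y \to (\tau_{\leq 0}Y)[1]$ yields $\cat S(X, Y) \cong \cat S(X, \tau_{\leq 0}Y)$. The same statements hold in $\cat T$.

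With these in hand, for $A \in \cat T^\heartsuit$ and $B \in \cat S^\heartsuit$ I would chain
\begin{equation*}
\cat S^\heartsuit(F^\heartsuit A, B) = \cat S(\tau_{\geq 0}F(A), B) \cong \cat S(F(A), B) \cong \cat T(A, G(B)) \cong \cat T(A, \tau_{\leq 0}G(B)) = \cat T^\heartsuit(A, G^\heartsuit B),
\end{equation*}
where the outer equalities are fullness of the hearts as subcategories, the two truncation isomorphisms use $F(A) \in \cat S_{\leq 0}$, $B \in \cat S^\heartsuit$ and $A \in \cat T^\heartsuit$, $G(B) \in \cat T_{\geq 0}$ respectively, and the middle isomorphism is the adjunction $F \dashv G$. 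The only thing requiring care is that every isomorphism in the chain is natural in both $A$ and $B$; this holds because each is induced by a morphism natural in the relevant variable (the truncation maps $X \to \tau_{\geq 0}X$ and $\tau_{\leq 0}Y \to Y$, and the adjunction bijection of $F \dashv G$), so the ``hard part'' here is really only this bookkeeping. Equivalently, one can argue structurally: right t-exactness makes $F$ restrict to $\bar F \colon \cat T_{\leq 0} \to \cat S_{\leq 0}$, left adjoint to $\tau_{\leq 0}G \colon \cat S_{\leq 0} \to \cat T_{\leq 0}$ (since $\tau_{\leq 0}$ is right adjoint to $\cat T_{\leq 0} \hookrightarrow \cat T$); as $\tau_{\leq 0}G$ sends $\cat S^\heartsuit$ into $\cat T^\heartsuit$ by left t-exactness and $\tau_{\geq 0} \colon \cat S_{\leq 0} \to \cat S^\heartsuit$ is left adjoint to the inclusion, composing these adjunctions and restricting the source to $\cat T^\heartsuit$ yields exactly $F^\heartsuit \dashv G^\heartsuit$. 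The right (resp. left) exactness of $F^\heartsuit$ (resp. $G^\heartsuit$) as a functor of abelian categories is already part of \Cref{prop:induced_functor_heart}.
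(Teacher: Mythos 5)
Your proof is correct and follows essentially the same route as the paper: the same chain of natural isomorphisms $\cat S^\heartsuit(F^\heartsuit A, B) \cong \cat S(F(A),B) \cong \cat T(A,G(B)) \cong \cat T^\heartsuit(A, G^\heartsuit B)$, using right t-exactness of $F$ and left t-exactness of $G$ to replace $H^0$ by a single truncation and then the t-structure orthogonality to remove it. The extra detail you supply on the truncation isomorphisms and naturality, and the alternative phrasing via composition of adjunctions, are fine but not needed beyond what the paper records.
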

	\begin{proof}
	    We may compute, for any $A \in \cat T^{\heartsuit}$ and $B \in \cat S^{\heartsuit}$:
        \begin{align*}
        \cat S^\heartsuit(F^\heartsuit(A),B)
        &\cong \cat S^\heartsuit(\tau_{\geq 0}F(A),B) && \text{($F$ is right t-exact)}\\
        &\cong \cat S(F(A),B) &&(B \in \cat S^\heartsuit \subseteq \cat S_{\geq 0}) \\
        &\cong \cat T(A,G(B)) \\
        &\cong \cat T(A,\tau_{\le 0}G(B)) &&(A \in \cat T^\heartsuit \subseteq \cat T_{\leq 0}) \\ 
        &\cong \cat T^\heartsuit(A,G^\heartsuit(B)) && \text{($G$ is left t-exact).}
         \qedhere
        \end{align*}
	\end{proof}

	\subsubsection{Epi-mono decompositions in t-structures}\label{subsubsec:epimono}
	In abelian categories, we can always factor a morphism as an epimorphism followed by a monomorphism. If we have a triangulated category with a t-structure, we can generalize such factorizations even quite outside the (abelian) heart.
	\begin{definition} \label{def:tmono_tepi}
		Let $\cat T$ be a triangulated category with a t-structure, and let $f \colon A \to B$ be a morphism in $\cat T$, with $A \in \cat T_{\leq 0}$. We say that $f$ is a \emph{t-monomorphism} if $\cone(f) \in \cat T_{\geq 0}$. We will sometimes picture a t-monomorphism as a hooked arrow $f \colon A \hookrightarrow B$.
		
		Moreover, let $g \colon A' \to B'$ be a morphism in $\cat T$, with both $A', B' \in \cat T_{\leq 0}$. We say that $g$ is a \emph{t-epimorphism} if $\cone(g)[-1] \in \cat T_{\leq 0}$. We will sometimes picture a t-epimorphism as an arrow with two heads $g \colon A' \twoheadrightarrow B'$.
	\end{definition}
	\begin{remark} \label{remark:tmono_tepi}
	    Using the long exact sequence in cohomology, it is easy to see that if $f \colon A \to B$ is a t-monomorphism, then $H^0(f)$ is a monomorphism and $H^{-n}(f)$ is an isomorphism for all $n \geq 1$. Analogously, we can check that if $g \colon A' \to B'$ is a t-epimorphism, then $H^0(f)$ is an epimorphism in $\cat T^\heartsuit$.
	\end{remark}
	 It may seem that the above Definition \ref{def:tmono_tepi} is quite ``ad hoc'' and asymmetrical; indeed, it looks like that t-monomorphisms and t-epimorphisms are not well-behaved concepts in general, but they need some additional restrictions to work.
	
	In the next result, we prove a ``t-epi/t-mono'' factorization for suitable morphisms in a triangulated category with a t-structure. A dual version appears in the proof of \cite[Proposition C.5.2.8]{lurie-SAG}; we give a detailed proof here, since it is one of the key ingredients of our main result.
	\begin{proposition} \label{prop:epimono_leftaisle}
		Let $\cat T$ be a triangulated category with a t-structure, and let $f \colon A \to B$ be a morphism in $\cat T$, with $A \in \cat T_{\leq 0}$. Then, $f$ can be decomposed as
		\begin{equation} \label{eq:epimono_leftaisle}
			\begin{tikzcd}
A \arrow[r, "p", two heads] & \operatorname{Coim}(f) \arrow[r, "\tilde{f}", hook] & B,
\end{tikzcd}
		\end{equation}
		where $\operatorname{Coim}(f) \in \cat T_{\leq 0}$, $p$ is a t-epimorphism (namely, $\cone(p)[-1] \in \cat T_{\leq 0}$) and $\tilde{f}$ is a t-monomorphism (namely, $\cone(\tilde{f}) \in \cat T_{\geq 0}$). In particular, taking $H^0$ (and recalling Remark \ref{remark:tmono_tepi}), this decomposition induces the usual epi-mono decomposition of $H^0(f)$ in the heart.
		
		More specifically, if we define $F$ as the object sitting in the distinguished triangle
		\[
		F \to A \xrightarrow{f} B,
		\]
		there is a distinguished triangle
		\begin{equation} \label{eq:epimono_leftaisle_coim}
		\tau_{\leq 0} F \to A \xrightarrow{p} \operatorname{Coim}(f)
		\end{equation}
		and a morphism of distinguished triangles
		\[
		\begin{tikzcd}
			\tau_{\leq 0} F \arrow[r] \arrow[d] & A \arrow[r, "p"] \arrow[d, equal] & \operatorname{Coim}(f) \arrow[r] \arrow[d, "\tilde{f}"] & {(\tau_{\leq 0} F) [1]} \arrow[d] \\
			F \arrow[r]                         & A \arrow[r, "f"]                                & B \arrow[r]                                        & {F[1],}                           
		\end{tikzcd}
		\]
		where $\tau_{\leq 0} F \to F$ is the natural morphism.
	\end{proposition}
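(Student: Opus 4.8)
The plan is to exhibit $\operatorname{Coim}(f)$ directly as the cone of a truncation-type morphism and then extract the two halves of the factorization from the octahedral axiom.

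First I would fix the objects. Let $F$ be as in the statement, sitting in a distinguished triangle $F \xrightarrow{b} A \xrightarrow{f} B \to F[1]$, and let $t\colon \tau_{\leq 0}F \to F$ be the canonical morphism; by the defining truncation triangle of the t-structure one has $\cone(t) \cong \tau_{\geq 1}F$. Define $\operatorname{Coim}(f) := \cone(b\circ t)$, so that by construction there is a distinguished triangle $\tau_{\leq 0}F \xrightarrow{b\circ t} A \xrightarrow{p} \operatorname{Coim}(f) \to (\tau_{\leq 0}F)[1]$, which is exactly \eqref{eq:epimono_leftaisle_coim}. Two of the three required properties are then immediate. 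Since $\cat T_{\leq 0}$ is closed under extensions (apply $\cat T(-,W)$ with $W \in \cat T_{\geq 1}$ to a triangle and use that the outer $\Hom$-groups vanish) and both $\tau_{\leq 0}F$ and $A$ lie in $\cat T_{\leq 0}$ (the first by construction, the second by hypothesis), we get $\operatorname{Coim}(f) \in \cat T_{\leq 0}$. From the same triangle $\cone(p) \cong (\tau_{\leq 0}F)[1]$, hence $\cone(p)[-1] \cong \tau_{\leq 0}F \in \cat T_{\leq 0}$, i.e.\ $p$ is a t-epimorphism.

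Next I would produce $\tilde f$ and verify the t-monomorphism property. Apply the octahedral axiom to the composite $b\circ t$ of $t\colon \tau_{\leq 0}F \to F$ with $b\colon F \to A$: since $B \cong \cone(b)$ and $\operatorname{Coim}(f) = \cone(b\circ t)$, the octahedron yields a morphism $\tilde f\colon \operatorname{Coim}(f)\to B$ with $\tilde f\circ p = f$, fitting into a distinguished triangle $\cone(t)\to \operatorname{Coim}(f)\xrightarrow{\tilde f} B\to \cone(t)[1]$ and, simultaneously, into the morphism of distinguished triangles displayed in the statement (the right-hand vertical being $t[1]\colon (\tau_{\leq 0}F)[1]\to F[1]$). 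Because $\cone(t)\cong \tau_{\geq 1}F\in\cat T_{\geq 1}$, we conclude $\cone(\tilde f)\cong (\tau_{\geq 1}F)[1]\in \cat T_{\geq 0}$, so $\tilde f$ is a t-monomorphism. Finally, applying $H^0$ and invoking Remark~\ref{remark:tmono_tepi}, the factorization \eqref{eq:epimono_leftaisle} descends to the usual epi-mono factorization of $H^0(f)$ in $\cat T^\heartsuit$.

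The main obstacle is the bookkeeping around the octahedral axiom: one has to select the octahedron so that the resulting $\tilde f$ is genuinely a map of distinguished triangles compatible with \emph{all} the data in the displayed diagram (in particular with the connecting maps to $(\tau_{\leq 0}F)[1]$ and $F[1]$), not merely some abstract morphism $\operatorname{Coim}(f)\to B$. An alternative route is to obtain $\tilde f$ from TR3 — which gives compatibility with both triangles for free, at the cost of only existence, not uniqueness — and then identify $\cone(\tilde f)$ separately by a $3\times 3$-lemma argument comparing the triangles of $t$, $b$ and $b\circ t$. Either way, the content of the proposition reduces to the formal computation $\cone(\tilde f)\cong(\tau_{\geq 1}F)[1]$, which places it in $\cat T_{\geq 0}$.
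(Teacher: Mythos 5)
Your proof is correct and follows essentially the same route as the paper: both define $\operatorname{Coim}(f)$ as the cone of the composite $\tau_{\leq 0}F \to F \to A$, check membership in $\cat T_{\leq 0}$ and the t-epimorphism property from that triangle, and use the octahedral axiom to identify $\cone(\tilde f)$ with a shift of $\tau_{\geq 1}F$. The only (immaterial) difference is packaging: you extract $\tilde f$ and the identification of its cone from a single octahedron, whereas the paper first produces $\tilde f$ via TR3 and then invokes the octahedron to compute $\cone(\tilde f)[-1] \cong \tau_{\geq 1}F$.
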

	\begin{proof}
		In abelian categories, the coimage of $f$ is the cokernel of its kernel. We need to reinterpret this in the t-structure setting. Consider the distinguished triangle
		\[
		F \to A \xrightarrow{f}  B.
		\]
		The composition
		\[
		j \colon \tau_{\leq 0} F \to F \to A
		\]
		plays the role of the kernel of $f$. Hence, we may form the distinguished triangle:
		\[
		\tau_{\leq 0} F \xrightarrow{j} A \xrightarrow{p} \operatorname{Coim}(f).
		\]
		Since both $\tau_{\leq 0} F$ and $A$ lie in $\cat T_{\leq 0}$, the same is true for $\operatorname{Coim}(f)$, and moreover we have that $p$ is a t-epimorphism by construction. Consider the following commutative diagram with rows being distinguished triangles:
		\[
		\begin{tikzcd}
			\tau_{\leq 0} F \arrow[r] \arrow[d] & A \arrow[r, "p"] \arrow[d, equal] & \operatorname{Coim}(f) \arrow[r] \arrow[d, dotted, "\tilde{f}"] & {(\tau_{\leq 0} F) [1]} \arrow[d] \\
			F \arrow[r]                         & A \arrow[r, "f"]                                & B \arrow[r]                                        & {F[1]}                           
		\end{tikzcd}
		\]
		The morphism $\tilde{f}$ is obtained by the completing the leftmost commutative square to a morphism of distinguished triangles. Finally, recalling the natural distinguished triangle
		\[
		\tau_{\leq 0} F \to F \to \tau_{\geq 1} F,
		\]
		an application of the octahedral axiom (see \cite[Proposition 1.4.6]{neeman-triangulated}) yields a distinguished triangle:
		\[
		\tau_{\leq 0} F \to F \to \cone(\tilde{f})[-1],
		\]
		whence we deduce that $\tau_{\geq 1} F \cong \cone(\tilde{f})[-1]$ and that $\cone(\tilde{f})$ lies indeed in $\cat T_{\geq 0}$, as claimed.
	\end{proof}
	\begin{corollary} \label{corollary:tepi_coimage}
	    In the setting of the above Proposition \ref{prop:epimono_leftaisle}, assume moreover that $B \in \cat T_{\leq 0}$ and that $f \colon A \to B$ is a t-epimorphism. Then, any morphism $\tilde{f}$ in the decomposition \eqref{eq:epimono_leftaisle} is an isomorphism, and in particular $\operatorname{Coim}(f) \cong B$.
	\end{corollary}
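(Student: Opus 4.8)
The plan is to observe that for a t-epimorphism the ``kernel object'' $F$ produced in Proposition~\ref{prop:epimono_leftaisle} automatically lies in the left aisle, which collapses the truncation that distinguishes $\operatorname{Coim}(f)$ from $B$.

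First I would unwind the definitions. Recall that $F$ sits in the distinguished triangle $F \to A \xrightarrow{f} B \to F[1]$, so that $F \cong \cone(f)[-1]$. Since by hypothesis $A, B \in \cat T_{\leq 0}$ and $f$ is a t-epimorphism, Definition~\ref{def:tmono_tepi} says precisely that $\cone(f)[-1] \in \cat T_{\leq 0}$, i.e.\ $F \in \cat T_{\leq 0}$. Hence the natural morphism $\tau_{\leq 0} F \to F$ is an isomorphism (equivalently $\tau_{\geq 1} F \cong 0$).

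Next I would plug this into the morphism of distinguished triangles supplied by Proposition~\ref{prop:epimono_leftaisle},
\[
\begin{tikzcd}
\tau_{\leq 0} F \arrow[r] \arrow[d] & A \arrow[r, "p"] \arrow[d, equal] & \operatorname{Coim}(f) \arrow[r] \arrow[d, "\tilde f"] & {(\tau_{\leq 0} F)[1]} \arrow[d] \\
F \arrow[r]                         & A \arrow[r, "f"]                  & B \arrow[r]                                          & {F[1].}
\end{tikzcd}
\]
Its first and second vertical arrows are isomorphisms (the first by the previous step, the second the identity), so the standard fact that in a morphism of distinguished triangles two isomorphic legs force the third to be an isomorphism yields that $\tilde f$ is an isomorphism; this is insensitive to the (non-unique) choice of $\tilde f$ completing the left-hand square, since the argument uses only the two outer columns. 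In particular $\tilde f \colon \operatorname{Coim}(f) \xrightarrow{\sim} B$. As an alternative to this five-lemma step one can quote directly the identification $\cone(\tilde f) \cong (\tau_{\geq 1} F)[1]$ obtained inside the proof of Proposition~\ref{prop:epimono_leftaisle}, which vanishes precisely because $F \in \cat T_{\leq 0}$.

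There is no real obstacle here: the only points requiring a little care are the shift bookkeeping that identifies $F$ with $\cone(f)[-1]$ and matches it against the t-epimorphism condition, and the remark that the conclusion does not depend on which representative $\tilde f$ is chosen in the decomposition \eqref{eq:epimono_leftaisle}.
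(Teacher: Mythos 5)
Your proof is correct and follows exactly the paper's argument: the t-epimorphism hypothesis forces $F \cong \cone(f)[-1]$ to lie in $\cat T_{\leq 0}$, so $\tau_{\leq 0}F \to F$ is an isomorphism and the conclusion follows from the morphism of triangles (or equivalently from the identification $\cone(\tilde f) \cong (\tau_{\geq 1}F)[1] \cong 0$). The paper states this tersely (``the result easily follows''); you have merely filled in the same details it leaves implicit.
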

    \begin{proof}
    By definition, in the proof of the above Proposition \ref{prop:epimono_leftaisle}, we find that the object $F$ lies in the left aisle $\cat T_{\leq 0}$. Hence, $\tau_{\leq 0} F \to F$ is an isomorphism and the result easily follows.
    \end{proof}
    \subsection{Derived injectives} \label{sect:dginj}
    T-structures allow to generalize the notion of injective object to the derived framework. \emph{Derived injectives} can be used to make resolutions of objects, just as their ``ordinary'' counterparts: this is thoroughly explored in \cite{genovese-lowen-vdb-dginj}. In this paper, we use them to prove a straightforward but crucial ``t-structure version'' of the following well-known fact:
    \begin{proposition} \label{prop:rightadj_preserve_inj_leftadj_exact_abelian}
        Let $G \colon \mathfrak A \to \mathfrak B$ be a functor between abelian categories which admits a left adjoint $F \colon \mathfrak B \to \mathfrak A$. Assume that $\mathfrak A$ has enough injectives. Then, $F$ is exact if and only if $G$ preserves injective objects.
    \end{proposition}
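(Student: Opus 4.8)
The plan is to play the adjunction isomorphism $\Hom_{\mathfrak B}(-,G(I)) \cong \Hom_{\mathfrak A}(F(-),I)$ (for $I \in \mathfrak A$) against the $\Hom$-functor descriptions of injectivity and of exactness. For the implication ``$F$ exact $\Rightarrow$ $G$ preserves injectives'' --- which, I expect, uses nothing about $\mathfrak A$ beyond being abelian --- I would argue as follows: given an injective $I \in \mathfrak A$, the adjunction provides a natural isomorphism of contravariant functors $\Hom_{\mathfrak B}(-,G(I)) \cong \Hom_{\mathfrak A}(F(-),I)$, exhibiting the left-hand side as a composite of the exact functor $F$ with the exact functor $\Hom_{\mathfrak A}(-,I)$ (exact since $I$ is injective); hence it is exact, that is, $G(I)$ is injective.

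For the converse I would assume $G$ preserves injectives and prove $F$ is exact. As a left adjoint, $F$ preserves all colimits, hence is right exact; so it is enough to show $F$ preserves monomorphisms, because then applying $F$ to a short exact sequence and combining right-exactness with mono-preservation yields a short exact sequence. So let $i \colon B' \hookrightarrow B$ be a monomorphism in $\mathfrak B$, set $K = \ker(F(i))$ with inclusion $\kappa \colon K \hookrightarrow F(B')$, and aim to show $K = 0$. Using that $\mathfrak A$ has enough injectives, I would pick a monomorphism $\mu \colon K \hookrightarrow I$ with $I$ injective; injectivity of $I$ applied to $\kappa$ then yields $\varphi \colon F(B') \to I$ with $\varphi\kappa = \mu$. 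On the other hand $G(I)$ is injective by hypothesis, so $\Hom_{\mathfrak B}(B,G(I)) \to \Hom_{\mathfrak B}(B',G(I))$ is surjective; transporting this along the natural adjunction isomorphism, $\Hom_{\mathfrak A}(F(B),I) \to \Hom_{\mathfrak A}(F(B'),I)$ is surjective, so $\varphi$ factors as $\psi \circ F(i)$ for some $\psi \colon F(B) \to I$. Then $\mu = \varphi\kappa = \psi \circ F(i) \circ \kappa = 0$ since $F(i)\circ\kappa = 0$, and as $\mu$ is monic this forces $K = 0$; hence $F(i)$ is a monomorphism and $F$ is exact.

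I do not anticipate any genuine obstacle here: the whole argument is a $\Hom$-chase through the adjunction. The only points deserving a little care are checking that right-exactness of $F$ (automatic, since $F$ is a left adjoint) together with preservation of monomorphisms really does give full exactness, and that the ``enough injectives'' hypothesis is invoked exactly once --- to produce the monomorphism $\mu \colon K \hookrightarrow I$ certifying the vanishing of $K$.
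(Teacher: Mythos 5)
Your proof is correct; the paper states this proposition as a well-known fact and does not prove it, so there is nothing to diverge from. Your argument for the nontrivial direction (embed $\ker(F(i))$ into an injective $I$, extend along $i$ using injectivity of $G(I)$, and conclude the embedding vanishes) is the standard one and is structurally the same as the paper's proof of its derived analogue, Proposition \ref{prop:adjoint-dginj}, where the ``enough injectives'' hypothesis is used in exactly the same single place.
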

	\subsubsection{Basic definitions and properties}
	We shall discuss derived injectives and some of their features essentially following \cite[\S 5.1]{rizzardo-vdb-nonFM}. It is worth mentioning that analogous notions have appeared in literature, for instance \emph{injective objects of stable $\infty$-categories} (see \cite[C.5.7]{lurie-SAG}) or \emph{Ext-injective objects} (see \cite{assem-extproj}); derived injectives over non-positively graded dg-algebras, with respect to the canonical t-structure, have been investigated in \cite{shaul-injectivedgmod}. We refer to Appendix \ref{appendix:dginj} for a ``Baer-like'' characterization of derived injectives.
	\begin{definition} \label{def:dginj}
        Let $\cat T$ be an ($H^0(\basering k$)-linear) triangulated category with a t-structure $(\cat T_{\leq 0}, \cat T_{\geq 0})$, and let $I \in \operatorname{Inj}(\cat T^\heartsuit)$ be an injective object in the heart. The \emph{derived injective} associated to $I$ is an object $L(I)$ (uniquely determined up to isomorphism) which represents the cohomological functor $\cat T^\heartsuit(H^0(-),I) \colon \opp{\cat T} \to \Mod(H^0(\basering k))$:
        \begin{equation} \label{eq:dginj}
        \cat T^\heartsuit(H^0(-),I) \cong \cat T(-,L(I)).
        \end{equation}
        
        An object $E \in \cat T$ will be called \emph{derived injective} if there exists an injective object $I \in \cat T^\heartsuit$ such that $E \cong L(I)$.
        
        If for any $I \in \operatorname{Inj}(\cat T^\heartsuit)$ an object $L(I)$ as above exists, we say that \emph{$\cat T$ has derived injectives}. Moreover, if $\cat T$ has derived injectives and the heart $\cat T^\heartsuit$ has enough injectives, we say that \emph{$\cat T$ has enough derived injectives}. 
        
        If $\cat A$ is a ($\basering k$-linear) t-dg-category (cf. Definition \ref{def:tstruct_dgcat}), we shall say that \emph{$\cat A$ has (enough) derived injectives} if $H^0(\cat A)$ has this property.
	\end{definition}
	\begin{proposition} \label{prop:dginj_basicproperties}
	     Let $\cat T$ be an ($H^0(\basering k$)-linear) triangulated category with a t-structure $(\cat T_{\leq 0}, \cat T_{\geq 0})$, and let $I \in \operatorname{Inj}(\cat T^\heartsuit)$ be an injective object in the heart. Assume that the derived injective $L(I)$ associated to $I$ exists. Then:
	     \begin{enumerate}
	         \item \label{item:dginj_basicproperties_1} $L(I) \in \cat T_{\geq 0}$. 
	         \item \label{item:dginj_basicproperties_2} $H^0(L(I)) \cong I$.
	         \item \label{item:dginj_basicproperties_3} The functor $H^0 \colon \cat T \to \cat T^\heartsuit$ induces an isomorphism
	         \[
	         H^0 \colon \cat T(A,L(I)) \xrightarrow{\sim} \cat T^\heartsuit(H^0(A),I),
	         \]
	         for all $A \in \cat T$.
	     \end{enumerate}
	\end{proposition}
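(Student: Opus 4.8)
The plan is to deduce all three items from the defining isomorphism \eqref{eq:dginj}, combined with the orthogonality of the aisles and the Yoneda lemma; I would prove part (1) first and then obtain parts (2) and (3) together from a single observation.

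For part (1), recall that $\cat T_{\geq 0}$ admits the orthogonality description $\cat T_{\geq 0} = \{ X \in \cat T : \cat T(Y,X) = 0 \text{ for all } Y \in \cat T_{\leq -1} \}$. So I would take an arbitrary $Y \in \cat T_{\leq -1}$, note that then $H^0(Y) = 0$ in $\cat T^\heartsuit$, and compute, via \eqref{eq:dginj}, $\cat T(Y, L(I)) \cong \cat T^\heartsuit(H^0(Y), I) = \cat T^\heartsuit(0, I) = 0$. Hence $L(I) \in \cat T_{\geq 0}$.

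The key point for parts (2) and (3) is that, by the Yoneda lemma, the natural isomorphism \eqref{eq:dginj} is necessarily of the form $f \mapsto \phi \circ H^0(f)$, where $\phi \in \cat T^\heartsuit(H^0(L(I)), I)$ is the image of $\mathrm{id}_{L(I)}$ under \eqref{eq:dginj} evaluated at $A = L(I)$; that is, \eqref{eq:dginj} is exactly the map induced by $H^0$ followed by post-composition with $\phi$. Granting that $\phi$ is an isomorphism, part (3) is then immediate, since \eqref{eq:dginj} factors as $\cat T(A, L(I)) \to \cat T^\heartsuit(H^0(A), H^0(L(I))) \to \cat T^\heartsuit(H^0(A), I)$ with the second arrow (post-composition with $\phi$) an isomorphism, so the first arrow is one as well; and part (2) is precisely the statement that $\phi$ is an isomorphism.

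It remains to show that $\phi$ is an isomorphism, and for this I would restrict \eqref{eq:dginj} to objects $M$ of the heart $\cat T^\heartsuit$. For such $M$ one has $H^0(M) = M$, and — using part (1) together with the adjunction between the inclusion $\cat T_{\leq 0} \hookrightarrow \cat T$ and $\tau_{\leq 0}$ — one gets $\cat T(M, L(I)) \cong \cat T(M, \tau_{\leq 0}L(I)) = \cat T^\heartsuit(M, H^0(L(I)))$, an identification under which $f$ corresponds to $H^0(f)$. Composing with \eqref{eq:dginj} produces a natural isomorphism $\cat T^\heartsuit(-, H^0(L(I))) \xrightarrow{\sim} \cat T^\heartsuit(-, I)$ of functors on $\cat T^\heartsuit$ given by post-composition with $\phi$, whence the Yoneda lemma in $\cat T^\heartsuit$ forces $\phi$ to be an isomorphism. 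The only genuinely delicate point in the whole argument is the bookkeeping here: one must check that, under the identification $\cat T(M, L(I)) \cong \cat T^\heartsuit(M, H^0(L(I)))$ — valid precisely because $L(I) \in \cat T_{\geq 0}$ — the morphism $f$ really does correspond to $H^0(f)$, which is the standard compatibility of $H^0$ with the truncation adjunctions. Everything else is a routine use of \eqref{eq:dginj}, the orthogonality of the aisles, and the Yoneda lemma.
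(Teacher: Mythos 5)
Your proof is correct and follows essentially the same route as the paper: part (1) via orthogonality and the vanishing of $H^0$ on $\cat T_{\leq -1}$, and parts (2)--(3) by restricting \eqref{eq:dginj} to the heart, using the truncation adjunction and the Yoneda lemma. Your explicit identification of \eqref{eq:dginj} as $f \mapsto \phi \circ H^0(f)$ is exactly the content of the commutative diagram the paper checks by ``direct inspection,'' so you have simply made that step more explicit.
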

	\begin{proof}
	    To prove \eqref{item:dginj_basicproperties_1}, let $Z \in \cat T_{\leq -1}$. Then, $H^0(Z) \cong 0$, so:
	    \[
	    \cat T(Z,L(I)) \cong \cat T^\heartsuit(H^0(Z),I) \cong 0,
	    \]
	    and we conclude that indeed $L(I) \in \cat T_{\geq 0}$.
	    
	    For \eqref{item:dginj_basicproperties_2}, let $B \in \cat T^\heartsuit$. We have natural isomorphisms:
	    \begin{align*}
	        \cat T^\heartsuit(B, H^0(L(I))) & \cong \cat T(B,\tau_{\leq 0} L(I)) && \text{(use point \eqref{item:dginj_basicproperties_1})} \\
	        & \cong \cat T(B, L(I)) \\
	        & \cong \cat T^\heartsuit(B, I) && \text{(use that $B \in \cat T^\heartsuit$ and the def. of $L(I)$).}
	    \end{align*}
	    By the Yoneda lemma in $\cat T^\heartsuit$, we obtain an isomorphism $H^0(L(I)) \cong I$.
	    
	    For \eqref{item:dginj_basicproperties_3}, we consider the following diagram:
	    \[
	    \begin{tikzcd}
        {\cat T(A,L(I))} \arrow[r, "H^0"] \arrow[rd, "\eqref{eq:dginj}"'] & {\cat T^\heartsuit(H^0(A),H^0(L(I)))} \arrow[d, "\sim"] \\
                                                     & {\cat T^\heartsuit(H^0(A),I).}                  
        \end{tikzcd}
	    \]
	    The diagonal arrow of the diagram is the natural isomorphism \eqref{eq:dginj} which defines $L(I)$; the vertical arrow is the natural isomorphism we described in the proof of point \eqref{item:dginj_basicproperties_2}. A direct inspection shows that the above diagram is commutative, and we conclude. 
	\end{proof}
	\begin{remark} \label{remark:dginj_H0inj_uniquely}
	    Let $E \in \cat T$ be a derived injective, namely, there exists an injective object $I \in \cat T^\heartsuit$ such that $E \cong L(I)$ (cf. Definition \ref{def:dginj}). Then, by point \eqref{item:dginj_basicproperties_2} of the above Proposition \ref{prop:dginj_basicproperties}, we see that $I \cong H^0(E)$. We conclude that \emph{$I \cong H^0(E)$ is an injective object uniquely determined by $E$}.
	\end{remark}
	
	We can characterize derived injectives by a vanishing property -- this is the actual link with Ext-injective objects that we mentioned before.
	\begin{proposition} \label{prop:dginj_equivalent_definitions}
	    Let $\cat T$ be a triangulated category with a t-structure, and let $E \in \cat T$ be an object. The following are equivalent:
	    \begin{enumerate}
	        \item \label{item:dginj_equivalent_definitions_1} $E$ is a derived injective (Definition \ref{def:dginj}).
	        \item \label{item:dginj_equivalent_definitions_2} $E \in \cat T_{\geq 0}$ and for any $Z \in \cat T_{\geq 0}$ we have
	        \begin{equation}
	            \cat T(Z[-1],E) \cong 0. \label{eq:dginj_vanishing}
	        \end{equation}
	    \end{enumerate}
	\end{proposition}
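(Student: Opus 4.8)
The plan is to prove the two implications separately. The direction \eqref{item:dginj_equivalent_definitions_1} $\Rightarrow$ \eqref{item:dginj_equivalent_definitions_2} is immediate from what we already have: if $E \cong L(I)$ with $I$ injective, then $E \in \cat T_{\geq 0}$ is part~\eqref{item:dginj_basicproperties_1} of Proposition \ref{prop:dginj_basicproperties}, while for any $Z \in \cat T_{\geq 0}$ we have $Z[-1] \in \cat T_{\geq 1}$, hence $H^0(Z[-1]) \cong 0$, so the defining isomorphism \eqref{eq:dginj} gives $\cat T(Z[-1], E) \cong \cat T^\heartsuit(H^0(Z[-1]), I) \cong \cat T^\heartsuit(0, I) \cong 0$. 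The real content is the converse \eqref{item:dginj_equivalent_definitions_2} $\Rightarrow$ \eqref{item:dginj_equivalent_definitions_1}, for which we set $I := H^0(E)$; since $E \in \cat T_{\geq 0}$ we have $\tau_{\geq 0}E \cong E$ and therefore $I \cong \tau_{\leq 0}E$. The goal becomes to show that $E$ represents the functor $\cat T^\heartsuit(H^0(-), I)$ and that $I$ is injective in $\cat T^\heartsuit$, so that $E \cong L(I)$ by Definition \ref{def:dginj}.

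For the representability I would run a short d\'evissage along the standard truncation triangles, applying the contravariant functor $\cat T(-, E)$ and using systematically two vanishing facts: $\cat T(X, E) \cong 0$ whenever $X \in \cat T_{\leq -1}$ (orthogonality in the t-structure, since $E \in \cat T_{\geq 0}$), and $\cat T(X, E) \cong 0$ whenever $X \in \cat T_{\geq 1}$ (this is precisely hypothesis \eqref{eq:dginj_vanishing}, as $\cat T_{\geq 1} = \cat T_{\geq 0}[-1]$). Concretely: from the triangle $\tau_{\leq 0}A \to A \to \tau_{\geq 1}A \to (\tau_{\leq 0}A)[1]$ one obtains $\cat T(A, E) \cong \cat T(\tau_{\leq 0}A, E)$ for arbitrary $A$; from $\tau_{\leq -1}A \to A \to H^0(A) \to (\tau_{\leq -1}A)[1]$ (valid for $A \in \cat T_{\leq 0}$) one obtains $\cat T(A, E) \cong \cat T(H^0(A), E)$; and from $\tau_{\leq 0}E \to E \to \tau_{\geq 1}E \to (\tau_{\leq 0}E)[1]$, applying $\cat T(B, -)$ with $B \in \cat T^\heartsuit$, one obtains $\cat T(B, E) \cong \cat T(B, \tau_{\leq 0}E) = \cat T(B, I) = \cat T^\heartsuit(B, I)$. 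Chaining these isomorphisms---each induced by a canonical truncation morphism, hence natural---yields an isomorphism $\cat T(A, E) \cong \cat T^\heartsuit(H^0(A), I)$ natural in $A$.

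Finally, to see that $I = H^0(E)$ is injective in $\cat T^\heartsuit$, take a monomorphism $B' \hookrightarrow B$ in the heart, complete it to a short exact sequence $0 \to B' \to B \to B'' \to 0$ and hence to a distinguished triangle $B' \to B \to B'' \to B'[1]$. Applying $\cat T(-, E)$ and noting that $B''[-1] \in \cat T^\heartsuit[-1] \subseteq \cat T_{\geq 1}$, hypothesis \eqref{eq:dginj_vanishing} forces the connecting term $\cat T(B''[-1], E)$ to vanish, so that $\cat T(B, E) \to \cat T(B', E)$ is surjective; transporting this along the natural isomorphism of the previous paragraph (restricted to the heart, where $H^0$ is the identity) shows that $\cat T^\heartsuit(B, I) \to \cat T^\heartsuit(B', I)$ is surjective, i.e.\ $I$ is injective, and therefore $E \cong L(I)$ is a derived injective. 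The whole argument is elementary; the one point to be careful about is bookkeeping the naturality of the d\'evissage isomorphisms, so that the surjectivity in the last step really transfers to $\cat T^\heartsuit(-, I)$.
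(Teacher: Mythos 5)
Your proposal is correct and follows essentially the same route as the paper's proof: the forward implication from Proposition \ref{prop:dginj_basicproperties}, and for the converse a d\'evissage along truncation triangles (using orthogonality against $\cat T_{\leq -1}$ and the vanishing hypothesis against $\cat T_{\geq 1}$) to identify $\cat T(-,E)$ with $\cat T^\heartsuit(H^0(-),H^0(E))$, plus the long exact sequence of a short exact sequence in the heart to see that $H^0(E)$ is injective. The only difference is the order in which you truncate the test object (below first, then above, versus the paper's reverse order), which is immaterial.
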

	\begin{proof}
	    The implication $\eqref{item:dginj_equivalent_definitions_1} \Rightarrow \eqref{item:dginj_equivalent_definitions_2}$ follows from the definition and from Proposition \ref{prop:dginj_basicproperties} \eqref{item:dginj_basicproperties_1}.
	    
	    Let us now prove $\eqref{item:dginj_equivalent_definitions_2} \Rightarrow \eqref{item:dginj_equivalent_definitions_1}$. First, we check that $H^0(E)$ is an injective object in $\cat T^\heartsuit$. Indeed, let
	    \[
	    0 \to A \to B \to C \to 0
	    \]
	    be a short exact sequence in $\cat T^\heartsuit$. This induces a distinguished triangle
	    \begin{equation} \label{eq:dginj_equivalent_definitions_ses}
	    A \to B \to C \tag{$\ast$}
	    \end{equation}
	    in $\cat T$. Moreover, since $E \in \cat T_{\geq 0}$, we have:
	    \begin{equation} \label{eq:dginj_equivalent_definitions_natiso} \tag{$\bullet$}
	    \begin{split} 
	        \cat T^\heartsuit(-,H^0(E)) & \cong \cat T^\heartsuit(-,\tau_{\leq 0} E) \\
	        & \cong \cat T(-,E)
	    \end{split}
	    \end{equation}
	    as functors $\opp{(\cat T^\heartsuit)} \to \Mod(H^0(\basering k))$. We consider the following long exact sequence induced by \eqref{eq:dginj_equivalent_definitions_ses}:
	    \[
	    \cat T(A[1],E) \to \cat T(C,E) \to \cat T(B,E) \to \cat T(A,E) \to \cat T(C[-1], E).
	    \]
	    Since $\cat C \in \cat T^\heartsuit \subseteq \cat T_{\geq 0}$, we have by hypothesis that $\cat T(C[-1],E)=0$; furthermore, $\cat T(A[1], E)=0$ since $A[1] \in \cat T_{\leq -1}$ and $E \in \cat T_{\geq 0}$. Recalling the above \eqref{eq:dginj_equivalent_definitions_natiso}, we see that the sequence
	    \[
	    0 \to \cat T^\heartsuit(C,H^0(E)) \to \cat T^\heartsuit(B,H^0(E)) \to \cat T^\heartsuit(A, H^0(E)) \to 0
	    \]
	    is exact, as we wanted.
	    
	    Next, we show that $E$ is the derived injective associated to the injective object $H^0(E)$, by checking that the morphism
	    \[
	    H^0 \colon \cat T(X, E) \to \cat T^\heartsuit(H^0(X),H^0(E))
	    \]
	    is an isomorphism for all $X \in \cat T$. Since $E \in \cat T_{\geq 0}$, we have an isomorphism
	    \[
	    \cat T(X,E) \cong \cat T(\tau_{\geq 0} X, E).
	    \]
	    Then, consider the canonical distinguished triangle
	    \[
	    H^0(X) \to \tau_{\geq 0} X \to \tau_{\geq 1} X
	    \]
	    and the induced exact sequence:
	    \[
	     \cat T(\tau_{\geq 1} X,E) \to \cat T(\tau_{\geq 0} X, E) \to \cat T(H^0(X), E) \to \cat T((\tau_{\geq 1}X)[-1], E).
	    \]
	    The object $\tau_{\geq 1}X$ lies in $\cat T_{\geq 1} \subseteq \cat T_{\geq 0}$, and moreover we can write $\tau_{\geq 1}X \cong (\tau_{\geq 1} X)[1][-1]$, and $(\tau_{\geq 1} X)[1] \in \cat T_{\geq 0}$. Hence, by hypothesis, we deduce that
	    \begin{align*}
	    \cat T(\tau_{\geq 1} X, E) &\cong 0, \\
	    \cat T((\tau_{\geq 1}X)[-1], E) & \cong 0.
	    \end{align*}
	    Hence, we have an isomorphism
	    \[
	    \cat T(\tau_{\geq 0} X, E) \cong \cat T(H^0(X),E).
	    \]
	    Finally, we have an isomorphism
	    \[
	    \cat T(H^0(X),E) \cong \cat T(H^0(X),H^0(E)).
	    \]
	    With a direct inspection, we now see that $H^0 \colon \cat T(X, E) \to \cat T^\heartsuit(H^0(X),H^0(E))$ can be factored as the chain of isomorphisms we have just described, and we conclude.
	\end{proof}
	We have an easy corollary which perhaps makes derived injectives a little more familiar.
	\begin{corollary} \label{coroll:dginj_extension_tmono}
	    Let $E$ be a derived injective, and let $g \colon A \to B$ be a t-monomorphism (Definition \ref{def:tmono_tepi}). Then, the induced morphism
	    \[
	    g^* \colon \cat T(B,E) \to \cat T(A,E)
	    \]
	    is surjective. In other words, any morphism $f \colon A \to E$ can be extended to a morphism $\tilde{f} \colon B \to E$ along the t-monomorphism $g \colon A \to B$:
	    \begin{equation} \label{eq:dginj_extension_tmono}
            \begin{tikzcd}
            A \arrow[r, "g", hook] \arrow[d, "f"'] & B \arrow[ld, "\tilde{f}", dotted] \\
            E.                                      &                                  
            \end{tikzcd}
	    \end{equation}
	\end{corollary}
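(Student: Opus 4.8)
The plan is to read off the result directly from the vanishing characterization of derived injectives established in \Cref{prop:dginj_equivalent_definitions}, applied to the defining triangle of the t-monomorphism $g$.

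First I would set $C = \cone(g)$ and write down the distinguished triangle
\[
A \xrightarrow{g} B \to C \to A[1]
\]
attached to $g$. Rotating it one step to the left gives a triangle $C[-1] \to A \xrightarrow{g} B \to C$, to which I apply the cohomological functor $\cat T(-,E)$. This yields an exact sequence
\[
\cat T(C,E) \to \cat T(B,E) \xrightarrow{g^*} \cat T(A,E) \to \cat T(C[-1],E),
\]
so surjectivity of $g^*$ is equivalent to the vanishing of $\cat T(C[-1],E)$.

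Next I would invoke the hypotheses: since $g$ is a t-monomorphism, by \Cref{def:tmono_tepi} we have $C = \cone(g) \in \cat T_{\geq 0}$; and since $E$ is derived injective, \Cref{prop:dginj_equivalent_definitions}\,\eqref{item:dginj_equivalent_definitions_2} tells us that $\cat T(Z[-1],E) \cong 0$ for every $Z \in \cat T_{\geq 0}$. Taking $Z = C$ gives $\cat T(C[-1],E) \cong 0$, hence $g^*$ is surjective; unwinding what surjectivity of $g^*$ means produces exactly the dotted extension $\tilde f$ in diagram \eqref{eq:dginj_extension_tmono}.

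There is essentially no obstacle here: the only thing to be slightly careful about is the bookkeeping of shifts (making sure it is $C[-1]$ and not $C[1]$ that appears, and that $C$ lands in $\cat T_{\geq 0}$ rather than some other aisle), but this is forced by the rotation of the triangle and the sign conventions already fixed in \Cref{def:tmono_tepi}. So the proof is a two-line consequence of the preceding proposition.
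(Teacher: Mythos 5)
Your argument is correct and coincides with the paper's own proof: both take the distinguished triangle on $g$, apply $\cat T(-,E)$, and kill the obstruction term $\cat T(\cone(g)[-1],E)$ using the t-monomorphism condition $\cone(g)\in\cat T_{\geq 0}$ together with the vanishing criterion of \Cref{prop:dginj_equivalent_definitions}. Nothing further is needed.
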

	\begin{proof}
	   We consider the distinguished triangle
	    \[
	    A \xrightarrow{g} B \to \cone(g).
	    \]
	    By assumption, $\cone(g) \in \cat T_{\geq 0}$, and by \Cref{prop:dginj_equivalent_definitions} above we know that 
	    \[
	    \cat T(\cone(g)[-1], E) \cong 0.
	    \]
	    We now immediately conclude by considering the exact sequence
	    \[
	    \cat T(B, E) \xrightarrow{g^*} \cat T(A,E) \to \cat T(\cone(g)[-1], E). \qedhere
	    \]
	\end{proof}
	We now give a reasonable sufficient condition which ensures that a given triangulated category with a t-structure \emph{has derived injectives}, in the sense of Definition \ref{def:dginj}.
	\begin{proposition} \label{prop:dginj_brownrepresentability}
		Let $\cat T$ be a triangulated category with a t-structure. In addition, assume that $\cat T$ is well generated \cite[Def 8.1.4]{neeman-triangulated} (in particular, it is closed under small direct sums). Assume moreover that the cohomological functor 
		\[
		H^0 \colon \cat T \to \cat T^\heartsuit
		\]
		preserves small direct sums. Then, $\cat T$ has derived injectives (cf. Definition \ref{def:dginj}).
	\end{proposition}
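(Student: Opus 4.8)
The plan is to deduce the statement from Brown representability for well generated triangulated categories. Fix an injective object $I \in \operatorname{Inj}(\cat T^\heartsuit)$ and consider the contravariant functor
\[
F = \cat T^\heartsuit(H^0(-),I) \colon \opp{\cat T} \to \Mod(H^0(\basering k)).
\]
By Definition \ref{def:dginj}, an object $L(I)$ is exactly a representing object for $F$ together with the natural isomorphism \eqref{eq:dginj}, so it suffices to verify that $F$ is a cohomological functor (i.e.\ it sends distinguished triangles to long exact sequences) and that it sends small coproducts in $\cat T$ to products. Once this is done, \cite[Theorem 5.1.1]{krause-localizationtheory} (see also \cite[Chapter 8]{neeman-triangulated}) produces $L(I)$, and since $I$ was an arbitrary injective object of the heart, $\cat T$ has derived injectives.

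The first step is to check that $F$ is cohomological. Given a distinguished triangle $A \xrightarrow{u} B \xrightarrow{v} C \to A[1]$ in $\cat T$, applying the cohomological functor $H^0 \colon \cat T \to \cat T^\heartsuit$ attached to the t-structure yields a long exact sequence in $\cat T^\heartsuit$; in particular the segment $H^0(A) \xrightarrow{H^0(u)} H^0(B) \xrightarrow{H^0(v)} H^0(C)$ is exact at the middle term, that is $\operatorname{Im}(H^0(u)) = \ker(H^0(v))$. Now apply $\cat T^\heartsuit(-,I)$: since $H^0(v)\circ H^0(u)=0$ we get $\operatorname{Im}(F(v)) \subseteq \ker(F(u))$; conversely, a map $g \colon H^0(B) \to I$ with $g\circ H^0(u)=0$ factors through $H^0(B)/\operatorname{Im}(H^0(u)) = H^0(B)/\ker(H^0(v)) \cong \operatorname{Im}(H^0(v))$, and since $I$ is injective the resulting map $\operatorname{Im}(H^0(v)) \to I$ extends along the monomorphism $\operatorname{Im}(H^0(v)) \hookrightarrow H^0(C)$ to some $h \colon H^0(C) \to I$ with $F(v)(h) = h\circ H^0(v) = g$. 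Hence $F(C) \xrightarrow{F(v)} F(B) \xrightarrow{F(u)} F(A)$ is exact at the middle, and since distinguished triangles may be rotated, this establishes that $F$ is cohomological.

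The second step, compatibility with coproducts, is formal: for a small family $(A_i)_{i\in J}$ in $\cat T$, the hypothesis that $H^0$ preserves small direct sums gives $H^0\!\left(\coprod_i A_i\right) \cong \coprod_i H^0(A_i)$ in $\cat T^\heartsuit$, and as $\cat T^\heartsuit(-,I)$ turns coproducts into products we obtain $F\!\left(\coprod_i A_i\right) \cong \prod_i F(A_i)$. (The $H^0(\basering k)$-linearity of the isomorphism \eqref{eq:dginj} delivered by Brown representability is automatic, since both $\cat T(-,L(I))$ and $F$ are $H^0(\basering k)$-linear and the isomorphism is pinned down by a universal element.) I do not anticipate any real obstacle here: once the long exact cohomology sequence of the t-structure is combined with the injectivity of $I$ as above, everything reduces to Brown representability, whose applicability is precisely what the well generated hypothesis provides. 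The only point that genuinely uses the hypothesis on $I$ — and hence deserves a moment's care — is the ``right-hand'' exactness in the second step: for a non-injective object of the heart the analogous functor would fail to be cohomological.
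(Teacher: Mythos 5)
Your proposal is correct and follows essentially the same route as the paper: both apply Brown representability for well generated triangulated categories to the functor $\cat T^\heartsuit(H^0(-),I)$ after noting it is cohomological and sends small coproducts to products. The paper simply asserts these two properties, whereas you verify them explicitly (correctly using the injectivity of $I$ together with the long exact cohomology sequence of the t-structure for the cohomological part).
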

	\begin{proof}
		Let $I$ be an injective object in $\cat T^\heartsuit$. By hypothesis, the functor 
		\[
		\cat T^\heartsuit(H^0(-),I)
		\]
		is cohomological and maps small direct sums to direct products. As Brown representability holds for well generated triangulated categories (see, for example, \cite[Thm A]{krause-brownrepresentability}), we conclude that $\cat T^\heartsuit(H^0(-),I)$ is representable, as desired.
	\end{proof}
	\subsubsection{Preservation of derived injectives and t-exactness}\label{subsubsec:texactness-dginj}
    In this part we prove the analogue of Proposition \ref{prop:rightadj_preserve_inj_leftadj_exact_abelian} in the framework of t-structures, as promised. However, before stating it, we need an additional hypothesis which essentially ensures that objects are uniquely determined by their cohomologies.
    \begin{definition} \label{def:tstruct_nondegenerate}
        Let $\cat T$ be a triangulated category with a t-structure. We say that such t-structure is \emph{non-degenerate} if one of the following equivalent conditions holds: 
        \begin{enumerate}
            \item \label{def:tstruct_nondegenerate_1} For any object $A \in \cat T$, we have that $A \cong 0$ if and only if $H^n_t(A) \cong 0$ for all $n \in \mathbb Z$.
            \item \label{def:tstruct_nondegenerate_2} Any morphism $f \colon A \to B$ in $\cat T$ is an isomorphism if and only if $H^n_t(f) \colon H^n_t(A) \to H^n_t(B)$ is an isomorphism in $\cat T^\heartsuit$, for all $n \in \mathbb Z$.
        \end{enumerate}
        
        If $\cat A$ is a t-dg-category, we say that it (or its t-structure) is \emph{non-degenerate} if the t-structure on the homotopy category $H^0(\cat A)$ is non-degenerate.
    \end{definition}
    \begin{remark} \label{remark:tstruct_nondegerate_aisles}
        If $\cat T$ has a non-degenerated t-structure, it is straightforward to see that its aisles $\cat T_{\leq 0}$ and $\cat T_{\geq 0}$ are completely determined by cohomologies, namely:
        \begin{equation} \label{eq:tstruct_nondegerate_aisles}
            \begin{split}
                    \cat T_{\leq n} &= \{ X \in \cat T : H^k_t(X) = 0,\ \forall\, k > n \}, \\
                    \cat T_{\geq n} &= \{ X \in \cat T : H^k_t(X) = 0,\ \forall\, k < n \},
            \end{split}
        \end{equation}
        as full subcategories of $\cat T$.
        
        In this setting, we then easily deduce that $f \colon A \to B$ (with $A \in \cat T_{\leq 0}$) is a t-monomorphism (Definition \ref{def:tmono_tepi}) if and only if $H^0_t(f)$ is a monomorphism and $H^{-n}_t(f)$ is an isomorphism for all $n \geq 1$; analogously, a morphism $g \colon A' \to B'$ (with $A', B' \in \cat T_{\leq 0}$) is a t-epimorphism if and only if $H^0_t(g)$ is an epimorphism.
    \end{remark}
	\begin{proposition}\label{prop:adjoint-dginj}
	    Let $\cat T$ and $\cat S$ be triangulated categories with t-structures, and assume that $\cat T$ has a non-degenerate t-structure (Definition \ref{def:tstruct_nondegenerate}) and enough derived injectives (Definition \ref{def:dginj}). Moreover, let $F\dashv G: \cat S \rightleftarrows \cat T$ be an adjunction of triangulated functors, such that $G$ is left t-exact. Then, $F$ is t-exact if and only if $G$ preserves derived injectives.
	\end{proposition}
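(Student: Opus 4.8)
The plan is to first invoke \Cref{prop:left-right-texact}: since $G$ is left t-exact, its left adjoint $F$ is automatically right t-exact, so ``$F$ is t-exact'' is the same as ``$F$ is left t-exact'', and the claim reduces to showing that $F(\cat S_{\geq 0}) \subseteq \cat T_{\geq 0}$ if and only if $G$ preserves derived injectives. Throughout I would use the intrinsic characterization of derived injectives from \Cref{prop:dginj_equivalent_definitions} (namely: $E$ is derived injective if and only if $E \in \cat T_{\geq 0}$ and $\cat T(Z[-1],E) \cong 0$ for every $Z \in \cat T_{\geq 0}$), together with \Cref{prop:dginj_basicproperties} and the defining isomorphism $\cat T^\heartsuit(H^0_t(-),I) \cong \cat T(-,L(I))$ of a derived injective $L(I)$.

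For the implication ``$F$ left t-exact $\Rightarrow$ $G$ preserves derived injectives'', I would take a derived injective $E \in \cat T$ and verify the two clauses of \Cref{prop:dginj_equivalent_definitions} for $G(E)$. The first is immediate: $E \in \cat T_{\geq 0}$ by \Cref{prop:dginj_basicproperties}, hence $G(E) \in \cat S_{\geq 0}$ by left t-exactness of $G$. For the second, given $Z \in \cat S_{\geq 0}$, the adjunction and the fact that $F$ and $G$ are triangulated give
\[
\cat S(Z[-1], G(E)) \;\cong\; \cat T(F(Z[-1]), E) \;\cong\; \cat T(F(Z)[-1], E),
\]
which vanishes by \Cref{prop:dginj_equivalent_definitions} applied to $E$, since $F(Z) \in \cat T_{\geq 0}$ by left t-exactness of $F$. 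Thus $G(E)$ is a derived injective in $\cat S$. Note that this direction uses neither non-degeneracy of $\cat T$ nor the existence of enough derived injectives.

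For the converse, assume $G$ preserves derived injectives and fix $Z \in \cat S_{\geq 0}$; the goal is $F(Z) \in \cat T_{\geq 0}$. By non-degeneracy of $\cat T$ (\Cref{remark:tstruct_nondegerate_aisles}) it suffices to show $H^k_t(F(Z)) \cong 0$ for every $k < 0$, and since $\cat T^\heartsuit$ has enough injectives, it is in turn enough to show $\cat T^\heartsuit(H^k_t(F(Z)), I) \cong 0$ for every injective object $I$ of $\cat T^\heartsuit$ and every $k < 0$. Here I would chain the isomorphisms
\[
\cat T^\heartsuit(H^k_t(F(Z)), I) \,=\, \cat T^\heartsuit(H^0_t(F(Z)[k]), I) \,\cong\, \cat T(F(Z)[k], L(I)) \,\cong\, \cat T(F(Z), L(I)[-k]) \,\cong\, \cat S(Z[k], G(L(I))),
\]
using in turn the definition of $L(I)$, the shift equivalence of $\cat T$, the adjunction $F \dashv G$, and that $G$ commutes with shifts. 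Since $G$ preserves derived injectives, $G(L(I))$ is a derived injective in $\cat S$; and for $k \leq -1$ we have $Z[k] \cong (Z[k+1])[-1]$ with $Z[k+1] \in \cat S_{\geq -k-1} \subseteq \cat S_{\geq 0}$, so the vanishing clause of \Cref{prop:dginj_equivalent_definitions} applied to $G(L(I))$ forces $\cat S(Z[k], G(L(I))) \cong 0$. This yields $F(Z) \in \cat T_{\geq 0}$, so $F$ is left t-exact and, being also right t-exact, t-exact.

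The main obstacle I anticipate is essentially organizational: one must arrange the shifts so that $Z[k]$ (for $k<0$) is exhibited as a $[-1]$-shift of an object of $\cat S_{\geq 0}$, so that the vanishing property of the derived injective $G(L(I))$ can be applied, and one must invoke non-degeneracy precisely at the step passing from ``$H^k_t(F(Z)) = 0$ for all $k<0$'' to ``$F(Z) \in \cat T_{\geq 0}$''. Everything else is formal manipulation of adjunctions, shifts, and the representability isomorphism defining derived injectives.
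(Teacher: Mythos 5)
Your proposal is correct and follows essentially the same route as the paper: reduce to left t-exactness via \Cref{prop:left-right-texact}, prove the forward direction by verifying the vanishing criterion of \Cref{prop:dginj_equivalent_definitions} for $G(E)$ through the adjunction, and prove the converse by testing $H^k_t(F(Z))$ against injectives $I$ of the heart and transporting $\cat T(F(Z)[k],L(I))$ to $\cat S(Z[k],G(L(I)))=0$. The only cosmetic difference is that the paper phrases the last step via an explicit monomorphism $H^{-n}(F(Z))\hookrightarrow I$ rather than your ``enough to test against all injectives'' formulation, which is the same argument.
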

		\begin{proof}
		    Since $G$ is left t-exact, we know from Proposition \ref{prop:left-right-texact} that $F$ is right t-exact. Hence, it is enough to prove that $F$ is left t-exact if and only if $G$ preserves derived injectives.
		    
			Assume that $F$ is left t-exact, and let $E \in \cat T$ be a derived injective. As $E \in \cat T_{\geq 0}$ and $G$ is left t-exact by hypothesis, we have that $G(E) \in \cat S_{\geq 0}$. By assumption, if $C \in \cat S_{\geq 0}$ then $F(C) \in \cat T_{\geq 0}$. Consequently, we have that  
			\begin{equation*}
				\cat S(C[-1],G(E)) \cong \cat T(F(C)[-1],E) = 0
			\end{equation*}
			for every $C \in \cat S_{\geq 0}$, which proves that $G(E)$ is derived injective (cf. Proposition \ref{prop:dginj_equivalent_definitions}).
			
			On the other hand, suppose that $G$ preserves derived injectives and let $C \in \cat S_{\geq 0}$. The t-structure on $\cat T$ is non-degenerate by assumption, so (recalling the above Remark \ref{remark:tstruct_nondegerate_aisles}) it is enough to show that
			\[
			H^0(F(C)[-n]) = H^{-n}(F(C)) \cong 0
			\]
			in $\cat T^{\heartsuit}$, for all $n \geq 1$. First, we observe that for every derived injective $E \in \cat T$ we have that 
			\begin{equation} \label{eq:homszero} \tag{$\ast$}
				\cat T(F(C)[-n], E) \cong \cat S(C[-n],G(E)) = 0,
			\end{equation} 
			for every $n \geq 1$, since $C[-n] \in \cat S_{\geq 1}$ (cf. Proposition \ref{prop:dginj_equivalent_definitions}). As $\cat T$ has enough derived injectives, the heart $\cat T^{\heartsuit}$ has enough injectives. Consequently, there exists an injective object $I$ in $\cat T^{\heartsuit}$ and a monomorphism $i \in \cat T^{\heartsuit}(H^{-n}(F(C)), I)$, for any fixed $n \geq 1$. In addition, again because $\cat T$ has enough derived injectives, we have that $I$ has an associated derived injective $L(I)$; in particular, there is an isomorphism:
			\[
			\cat T^{\heartsuit} (H^{-n}(F(C)), I) \cong \cat T(F(C)[-n], L(I)).
			\]
			This, taking $E=L(I)$ in \eqref{eq:homszero} above, implies that
			\begin{equation*}
				i \in \cat T^{\heartsuit}(H^{-n}(F(C)), I) \cong \cat T(F(C)[-n],L(I)) = 0.
			\end{equation*}
			As $i$ is a monomorphism, we deduce that $H^{-n}(F(C)) = 0$ ($n\geq 1$), as we wanted.
		\end{proof}
	
	\section{A derived Gabriel-Popescu theorem for t-structures}\label{sect:Gabriel-Popescu}
	This part is devoted to the proof of the main result of the paper, namely, a derived Gabriel-Popescu theorem for dg-categories endowed with a t-structure (also called t-dg-categories as in Definition \ref{def:tstruct_dgcat}).
	\subsection{The setup}
	    The classical Gabriel-Popescu theorem shows that Grothendieck abelian categories are localizations of categories of modules \cite{gabriel-popescu-original}. In \cite{porta-gabrielpopescu-triangulated} a triangulated Gabriel-Popescu theorem is provided, showing that well generated algebraic triangulated categories are localizations of derived categories of small dg-categories. Our aim is to provide a t-dg-Gabriel-Popescu theorem for t-dg-categories with suitable assumptions which capture the idea of ``being Grothendieck in a derived sense''. In particular, the heart of such t-dg-categories will be a Grothendieck abelian category.
	\begin{setup} \label{setup:GabrielPopescu}
	We shall fix a $\basering k$-linear t-dg-category $\cat A$. We make the following assumptions:
	\begin{itemize}
		\item $H^0(\cat A)$ is well generated. In particular, it has small direct sums.
		\item The t-structure on $\cat A$ is non degenerate (Definition \ref{def:tstruct_nondegenerate}).
		\item The cohomological functor
		\[
		H^0_t = H^0 \colon H^0(\cat A) \to H^0(\cat A)^\heartsuit
		\]
		preserves small direct sums.
		\item Filtered colimits are exact in $H^0(\cat A)^\heartsuit$.
		\item The t-dg-category $\cat A$ has a small \emph{set of generators}, namely, a set $\mathcal U$ of objects in $\cat A_{\leq 0}$, such that for every object $A \in \cat A_{\leq 0}$ there is a t-epimorphism in $H^0(\cat A)$ (Definition \ref{def:tmono_tepi}):
		\[
		p \colon \bigoplus_{i \in I} U_i \twoheadrightarrow A,
		\]
		where $U_i \in \mathcal U$ for all $i \in I$, for some set $I$.
	\end{itemize}
	\end{setup}
	\begin{remark}
	The assumptions of the above Setup \ref{setup:GabrielPopescu} are slightly different than the ones of \cite[C.2.1.10]{lurie-SAG}, but essentially equivalent. The advantage of our set of assumptions is that they only depend on the homotopy category $H^0(\cat A)$.
	\end{remark}
	\begin{lemma} \label{lemma:heart_Grothendieck}
	Let $\cat A$ be as in Setup \ref{setup:GabrielPopescu} above. Then, the heart $H^0(\cat A)^\heartsuit$ is a Grothendieck abelian category, with generators given by the set
	\[
	H^0_t(\mathcal U) = \{ H^0_t(U) : U \in \mathcal U\}.
	\]
	
	In particular, $\cat A$ has enough derived injectives (Definition \ref{def:dginj}).
	\end{lemma}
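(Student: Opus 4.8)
The plan is to verify, property by property, that the heart $\cat T^{\heartsuit}$ of $\cat T := H^0(\cat A)$ meets the definition of a Grothendieck abelian category, and then to read off the statement about derived injectives. Abelianness of $\cat T^{\heartsuit}$ is part of the general theory of t-structures \cite{beilinson-bernstein-deligne-perverse}, so the three things to produce are: all small colimits, exactness of filtered colimits, and a small set of generators.

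For cocompleteness, I would first note that $\cat T = H^0(\cat A)$ has all small coproducts (it is well generated) and that the left aisle $\cat T_{\leq 0}$ is closed under them, since $\cat T_{\leq 0} = {}^{\perp}\cat T_{\geq 1}$ and $\cat T(\bigoplus_i X_i, Y) \cong \prod_i \cat T(X_i, Y)$ vanishes for $Y \in \cat T_{\geq 1}$. Hence for a family $\{X_i\}$ in $\cat T^{\heartsuit}$ the object $\bigoplus_i X_i$ lies in $\cat T_{\leq 0}$, and using the adjunction between $\tau_{\geq 0}$ and the inclusion $\cat T_{\geq 0} \hookrightarrow \cat T$ together with $\cat T^{\heartsuit} \subseteq \cat T_{\geq 0}$ one checks that $H^0_t(\bigoplus_i X_i) = \tau_{\geq 0}(\bigoplus_i X_i)$ is the coproduct of the $X_i$ in $\cat T^{\heartsuit}$; by the third bullet of Setup~\ref{setup:GabrielPopescu}, $H^0_t$ in fact commutes with small direct sums, which makes this identification transparent. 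Since an abelian category with small coproducts is cocomplete, $\cat T^{\heartsuit}$ has all small colimits, and exactness of filtered colimits is precisely the fourth bullet of Setup~\ref{setup:GabrielPopescu}.

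Next I would show that $H^0_t(\mathcal U) = \{ H^0_t(U) : U \in \mathcal U \}$ is a small set of generators. Given $B \in \cat T^{\heartsuit} \subseteq \cat T_{\leq 0}$, the last bullet of Setup~\ref{setup:GabrielPopescu} provides a t-epimorphism $p \colon \bigoplus_{i \in I} U_i \twoheadrightarrow B$ in $\cat T$ with all $U_i \in \mathcal U$; as observed above $\bigoplus_i U_i \in \cat T_{\leq 0}$, so $p$ is a t-epimorphism between objects of the left aisle, and therefore $H^0_t(p)$ is an epimorphism in $\cat T^{\heartsuit}$ (Remark~\ref{remark:tmono_tepi}). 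Since $H^0_t(\bigoplus_i U_i) \cong \bigoplus_i H^0_t(U_i)$, this exhibits $B$ as a quotient of a coproduct of objects of $H^0_t(\mathcal U)$. A standard argument then upgrades this to the generating property: if $e \colon \bigoplus_k G_k \twoheadrightarrow M$ is an epimorphism and every component $G_k \to M$ factors through a subobject $M' \subseteq M$, then $e$ factors through $M'$, forcing $M' = M$. Collecting the last two paragraphs, $\cat T^{\heartsuit}$ is a cocomplete abelian category with exact filtered colimits and a set of generators (equivalently, the single generator $\bigoplus_{U \in \mathcal U} H^0_t(U)$), i.e.\ a Grothendieck abelian category, with generators $H^0_t(\mathcal U)$ as claimed.

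For the final assertion, a Grothendieck abelian category has enough injectives by the classical theorem, so $H^0(\cat A)^{\heartsuit}$ does; and $H^0(\cat A)$ has derived injectives by Proposition~\ref{prop:dginj_brownrepresentability}, whose hypotheses are exactly the first and third bullets of Setup~\ref{setup:GabrielPopescu}. By Definition~\ref{def:dginj}, $\cat A$ has enough derived injectives. The only point requiring genuine care rather than routine verification is the bookkeeping in the first two paragraphs — that the coproduct of objects of the heart is really computed by $H^0_t$ of the ambient direct sum, and that a t-epimorphism between objects of $\cat T_{\leq 0}$ becomes an honest epimorphism after applying $H^0_t$ — after which the rest is standard Grothendieck-category theory.
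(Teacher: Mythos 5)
Your proof is correct and follows essentially the same route as the paper's: the heart acquires coproducts because $H^0_t$ preserves direct sums, the generators come from applying $H^0_t$ to the t-epimorphism $\bigoplus_i U_i \twoheadrightarrow A$ supplied by the Setup, exactness of filtered colimits is an assumption, and enough derived injectives follows from Proposition~\ref{prop:dginj_brownrepresentability} together with the classical fact that Grothendieck categories have enough injectives. The only difference is that you spell out the bookkeeping (closure of $\cat T_{\leq 0}$ under coproducts, the $\tau_{\geq 0}$-adjunction identifying the heart coproduct, the upgrade from ``quotient of a coproduct of generators'' to the generating property) that the paper leaves implicit.
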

	\begin{proof}
	First, the assumption that $H^0_t$ preserves direct sums entails that, for any set of objects $\{B_j : j \in J\}$ in $H^0(\cat A)^\heartsuit$, the object $H^0_t (\oplus_j B_j)$ is indeed (together with the suitable structure morphisms) a direct sum of the $B_j$ in $H^0(\cat A)^\heartsuit$. Hence, the heart $H^0(\cat A)^\heartsuit$ has direct sums (and in particular all colimits).
	
	Next, let $A \in H^0(\cat A)^\heartsuit$. By hypothesis, there is a t-epimorphism
	\[
	p \colon \bigoplus_{i \in I} U_i \twoheadrightarrow A,
	\]
	where $U_i \in \mathcal U$ for all $i \in I$, for some set $I$. By definition, this induces an epimorphism
	\[
	H^0_t(p) \colon \bigoplus_{i \in I} H^0(U_i) \twoheadrightarrow H^0_t(A)=A,
	\]
	where we also used that $H^0_t$ preserves direct sums. This shows that $H^0_t(\mathcal U)$ is a set of generators of $H^0(\cat A)^\heartsuit$. Exactness of filtered colimits holds by assumption, so we conclude that $H^0(\cat A)^\heartsuit$ is indeed a Grothendieck abelian category.
	
	By Proposition \ref{prop:dginj_brownrepresentability} we know that $\cat A$ has derived injectives. Since $H^0(\cat A)^\heartsuit$ has enough injectives (being Grothendieck abelian), we conclude that $\cat A$ has indeed enough derived injectives.
	\end{proof}
	\begin{example} \label{example:derivedcategory}
	     Let $\smallcat a$ be a (small) dg-category concentrated in nonpositive degrees:
	     \[
	     H^i(\smallcat a(A,B)) \cong 0, \qquad A,B \in \smallcat a, \quad i > 0.
	     \]
	     The derived dg-category $\dercompdg(\smallcat a)$ is naturally a t-dg-category (cf. Proposition \ref{prop:dercomp_naturaltstruct}). Moreover, it is easy to see that it satisfies the assumptions of the above Setup \ref{setup:GabrielPopescu}:
	     \begin{itemize}
	         \item The derived category $\dercomp(\smallcat a)=H^0(\dercompdg(\smallcat a))$ is compactly generated and the representables $\smallcat a(-,A)$ constitute a set of compact generators. Actually, all algebraic triangulated categories that are compactly generated are of this form (see, for example, \cite[Thm Cor 7.3]{porta-gabrielpopescu-triangulated}). In particular, $\dercomp(\smallcat a)$ is well generated.
	         \item The t-structure on $\dercompdg(\smallcat a)$ is clearly non-degenerate: acyclic objects are isomorphic to $0$ in the derived category, practically by definition.
	         \item Using h-projective resolutions it is easy to see that the cohomological functor
	         \[
	         H^0 \colon \dercomp(\smallcat a) \to \Mod(H^0(\smallcat a))
	         \]
	         preserves (small) direct sums. 
	         \item The heart of the t-structure is $\Mod(H^0(\smallcat a))$, which is a Grothendieck category. In particular, filtered colimits are exact in the heart .
	         \item $\dercompdg(\smallcat a)$ has enough derived injectives. This follows from Proposition \ref{prop:dginj_brownrepresentability} and the fact that the heart $\Mod(H^0(\smallcat a))$ has enough injectives.
	         \item The representables $\smallcat a(-,A)$ form a set of generators of the canonical t-structure. Indeed, for any object $M \in \dercomp(\smallcat a)_{\leq 0}$, we can find an epimorphism
	         \[
	         \bigoplus_{i \in I} H^0(\smallcat a(-,A_i)) \twoheadrightarrow H^0(M),
	         \]
	         for a suitable family of objects $\{A_i : i \in I\}$. By the Yoneda lemma, we can lift this epimorphism to a t-epimorphism
	         \[
	         \bigoplus_{i \in I} \smallcat a(-,A_i) \twoheadrightarrow M.
	         \]
	     \end{itemize}
	     The derived dg-category $\dercompdg(\smallcat a)$ is a prototypical example of t-dg-category for which the derived Gabriel-Popescu theorem holds, and it is the derived counterpart of the Grothendieck abelian category of modules.
	\end{example}
	\begin{example} \label{example:dercat_Grothendieckabelian}
	In thi s example we assume that $\basering k$ is an ordinary commutative ring. 
	
	Let $\mathfrak G$ be a Grothendieck $\basering k$-linear category. It is well-known that the derived category $\dercomp (\mathfrak G)$ admits a unique dg-enhancement \cite[Thm A]{canonaco-stellari-dgenh-survey} (see also \S\ref{subsec:application}) that we denote by $\dercompdg (\mathfrak G)$. The dg-category $\dercompdg (\mathfrak G)$ is naturally a t-dg-category by considering the standard t-structure of $\dercomp(\mathfrak G)$.
	In addition, it satisfies the assumptions provided in \Cref{setup:GabrielPopescu}:
	\begin{itemize}
	    \item The derived category $\dercomp(\mathfrak G) = H^0(\dercompdg(\mathfrak G))$ is well generated \cite[Thm 5.10]{krause-derivingauslander}. More concretely, we can always find a regular cardinal $\alpha \neq \aleph_0$ such that $\mathfrak G$ is locally $\alpha$-presentable and its full subcategory $\mathfrak G^{\alpha}$ of $\alpha$-presentable objects is abelian. Then $\dercomp(\mathfrak G)$ is $\alpha$-compactly generated and the $\alpha$-compact generators are given by $\dercomp(\mathfrak G^{\alpha})$ (see \cite{krause-derivingauslander}).
	    \item As in \Cref{example:derivedcategory} above, the t-structure is trivially non-degenerate: acyclic objects are isomorphic to $0$ in the derived category. 
	    \item The heart of the standard t-structure on $\dercomp(\mathfrak G)$ is given by $\mathfrak G$. In particular, because $\mathfrak G$ is Grothendieck, we have that filtered colimits are exact in the heart.
	    \item The cohomological functor 
	    \[
	    H^0: \dercomp(\mathfrak G) \to \mathfrak G
	    \]
	    preserves (small) direct sums as a direct consequence of the fact that filtered colimits are exact in $\mathfrak G$.
	    \item $\dercompdg(\mathfrak G)$ has enough derived injectives. This follows from \Cref{prop:dginj_brownrepresentability} and the fact that $\mathfrak G$ has enough injectives.
	    \item Let $U$ be a generator of $\mathfrak G$ and consider it as a chain complex concentrated in degree 0. In particular, we can see $U$ as an object in $\dercompdg(\mathfrak G)_{\leq 0}$.  We show that $U$ is a generator of the standard t-structure.
	    Given a chain complex $B \in \dercomp(\mathfrak G)_{\leq 0}$, we may assume that $B$ is strictly concentrated in non-positive degrees by replacing it with its truncation $\tau_{\leq 0}B$. Then, because $U$ generates $\mathfrak G$, we can always find a morphism of chain complexes
	    \[
	    f: \bigoplus_{i \in I} U \to B
	    \]
	    that is surjective on degree 0. Observe that both $B$ and $\bigoplus_{i \in I} U$, seen as objects in $\dercomp(\mathfrak G)$, belong to $\dercomp(\mathfrak G)_{\leq 0}$. We are going to show that the morphism induced by $f$ in $\dercomp(\mathfrak G)$ is a t-epimorphism. Consider the distinguished triangle
	    \[
	    \bigoplus_{i \in I} U \to B \to \cone(f)
	    \]
	    that $f$ induces on $\dercomp(\mathfrak G)$.
	     By construction, we have that $\cone(f) \in \dercomp(\mathfrak G)_{\leq 0}$ and it only remains to show that $H^0(\cone(f)) = 0$ to conclude. We compute the long exact sequence of cohomology: 
	    \[
	    H^{-1}(\cone(f)) \to \bigoplus_{i \in I} U \twoheadrightarrow H^0(B) \to H^0(\cone(f)) \to 0 
	    \]
	    Observe that $\bigoplus_{i \in I} U \twoheadrightarrow H^0(B)$ is an epimorphism in $\mathfrak G$ by construction of $f$. Hence, we deduce that $H^0(\cone(f)) = 0$ as desired.
	\end{itemize}
	\end{example}
	\subsection{Statement of the main results (and a few corollaries)}
	We let $\cat A$ be a t-dg-category as in Setup \ref{setup:GabrielPopescu}. Moreover, we view the set of generators $\mathcal U$ as a full dg-subcategory of $\cat A$. Recalling \S \ref{subsubsec:truncations_dgcat}, we set:
	\[
	\smallcat u = \tau^{\leq 0} \mathcal U.
	\]
	The objects of $\smallcat u$ are the same as $\mathcal U$. There is a natural dg-functor
	\[
	j \colon \smallcat u \to \cat A.
	\]
	Recalling \S \ref{subsubsec:ind_res}, we have a restriction quasi-functor:
	\[
	\Res_j \colon \dercompdg(\cat A) \to \dercompdg(\smallcat u).
	\]
	Composing it with the Yoneda embedding $\cat A \hookrightarrow \dercompdg(\cat A)$, we obtain a quasi-functor
	\begin{equation} \label{eq:GabrielPopescu_rightadj_def}
	    G \colon \cat A \to \dercompdg(\smallcat u).
	\end{equation}
	At the level of homotopy categories, this can be identified with the triangulated functor
	\begin{equation} \label{eq:GabrielPopescu_rightadj_hocat}
	    \begin{split}
	        H^0(\cat A) & \to \dercomp(\smallcat u), \\
	        A & \mapsto \cat A(j(-),A).
	    \end{split}
	\end{equation}
	
	We can finally state our main result.
	\begin{theorem}[A derived Gabriel-Popescu theorem for t-dg-categories]\label{thm:GabrielPopescu}
		Let $\cat A$ be a t-dg-category as in Setup \ref{setup:GabrielPopescu}. Then, the quasi-functor
		\begin{equation} \label{eq:GabrielPopescu_rightadj_def2}
			\begin{split}
				G \colon \cat A & \to \dercompdg(\smallcat u), \\
				A & \mapsto \cat A(j(-), A)
			\end{split}
		\end{equation}
	is quasi-fully faithful and has a t-exact left adjoint quasi-functor
		\begin{equation} \label{eq:GabrielPopescu_ladj}
				F \colon \dercompdg(\smallcat u) \to \cat A
		\end{equation}
	such that $H^0(F)(\smallcat u(-,U)) \cong U$ in $H^0(\cat A)$, naturally in $U \in H^0(\smallcat u)$.

	In particular, $(\cat A, F)$ is a dg-quotient (cf. \S \ref{subsubsec:dgquotients_localizations}) of $\dercompdg(\smallcat u)$ by its full dg-subcategory $\ker(F)$ spanned by the objects $\{M \in \dercompdg(\smallcat u) : H^0(F)(M) \cong 0\}$.
	
	Moreover, the adjunction $F \dashv G$ induces an adjunction
	\begin{equation} \label{eq:GabrielPopescu_adj_hearts}
	    H^0(F)^\heartsuit \dashv H^0(G)^\heartsuit \colon \Mod(H^0(\smallcat u)) \rightleftarrows H^0(\cat A)^\heartsuit,
	\end{equation}
	such that $H^0(F)^\heartsuit$ is exact and
	\begin{equation} \label{eq:GabrielPopescu_rightadj_heart}
	\begin{split}
	    	    H^0(G)^\heartsuit \colon H^0(\cat A)^\heartsuit & \to \Mod(H^0(\smallcat u)), \\
	    	    A & \mapsto H^0(\cat A)(j(-),A),
	\end{split}
	\end{equation}
	is fully faithful.
	\end{theorem}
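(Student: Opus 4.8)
The plan is to follow the roadmap sketched in the introduction. First I would construct the adjoint $F$, prove directly that it is right t-exact, then prove a derived version of Mitchell's extension lemma, use it both to show that $G$ preserves derived injectives — whence $F$ becomes left t-exact by \Cref{prop:adjoint-dginj}, hence t-exact — and to show that $G$ is quasi-fully faithful, and finally read off the quotient and heart statements formally. For the existence of $F$: by \Cref{lemma:adjoints_quasifunctors_cohomology} (applied with $H^0$, since $\cat A$ and $\dercompdg(\smallcat u)$ are pretriangulated) it suffices to equip $H^0(G)\colon H^0(\cat A)\to\dercomp(\smallcat u)$ with a left adjoint as an ordinary functor. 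The derived Yoneda lemma yields a natural isomorphism $\dercomp(\smallcat u)(\smallcat u(-,U),G(A))\cong H^0(\cat A)(j(U),A)$, so $A\mapsto\dercomp(\smallcat u)(\smallcat u(-,U),G(A))$ is corepresented by $j(U)$; since $\dercomp(\smallcat u)$ is compactly generated by the representables $\smallcat u(-,U)$ and $H^0(\cat A)$ is well generated (so it has all coproducts and Brown representability holds), these corepresenting objects assemble, along the homotopy-colimit presentations of objects of $\dercomp(\smallcat u)$, into a left adjoint $H^0(F)$; the dg-enhancement makes this construction rigorous. By construction $H^0(F)(\smallcat u(-,U))\cong j(U)=U$, naturally in $U\in H^0(\smallcat u)$.

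Next, $F$ is right t-exact. Since $\smallcat u=\tau^{\leq 0}\mathcal U$ is concentrated in nonpositive degrees, each representable $\smallcat u(-,U)$ lies in $\dercompdg(\smallcat u)_{\leq 0}$, and (by \Cref{prop:dercomp_naturaltstruct}, using that $H^0$ on $\dercomp(\smallcat u)$ commutes with coproducts) every object of $\dercompdg(\smallcat u)_{\leq 0}$ is built from these representables by coproducts, positive shifts and extensions. As $H^0(F)$ is triangulated and coproduct-preserving and $H^0(F)(\smallcat u(-,U))=U\in\cat A_{\leq 0}$, while $\cat A_{\leq 0}$ is itself a left aisle closed under coproducts (because $H^0_t$ preserves coproducts, see \Cref{setup:GabrielPopescu} and \Cref{remark:tstruct_nondegerate_aisles}), we obtain $H^0(F)(\dercompdg(\smallcat u)_{\leq 0})\subseteq\cat A_{\leq 0}$, i.e.\ $F$ is right t-exact. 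Equivalently (\Cref{prop:left-right-texact}), $G$ is left t-exact — which one also sees directly: for $A\in\cat A_{\geq 0}$ and $i<0$, $H^i(G(A))(U)=\Hom_{H^0(\cat A)}(j(U),A[i])=0$ because $j(U)\in\cat A_{\leq 0}$ and $A[i]\in\cat A_{\geq 1}$.

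The technical core — and the step I expect to be the main obstacle — is a derived version of Mitchell's extension lemma. Imitating Mitchell's argument, I would show that for every derived injective $E\in\cat A$, morphisms into $G(E)$ in $\dercompdg(\smallcat u)$ extend along a suitable generating family of t-monomorphisms (\Cref{def:tmono_tepi}): the proof transports such an extension problem across the adjunction $F\dashv G$, replaces the image under $F$ of the relevant t-monomorphism by its t-epi/t-mono factorization (\Cref{prop:epimono_leftaisle}), and applies the extension property of the derived injective $E$ along t-monomorphisms in $\cat A$ (\Cref{coroll:dginj_extension_tmono}), using that $H^0_t(\mathcal U)$ generates the heart (\Cref{lemma:heart_Grothendieck}). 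Granting this: $G(E)\in\dercompdg(\smallcat u)_{\geq 0}$ by left t-exactness of $G$, and $G(E)$ satisfies the vanishing of \Cref{prop:dginj_equivalent_definitions}, so $G(E)$ is a derived injective — that is, $G$ preserves derived injectives. Since $\cat A$ has a non-degenerate t-structure with enough derived injectives (\Cref{lemma:heart_Grothendieck}) and $F$ is already right t-exact, \Cref{prop:adjoint-dginj} now forces $F$ to be left t-exact, hence t-exact. The same derived Mitchell lemma, now together with t-exactness of $F$, drives the quasi-full faithfulness of $G$: a right adjoint is quasi-fully faithful exactly when the counit $H^0(F)H^0(G)\to\mathrm{id}_{H^0(\cat A)}$ is an isomorphism, and by non-degeneracy it suffices to check this on all cohomology objects $H^n_t$; shifting reduces to the heart, where Mitchell's full-faithfulness argument runs essentially verbatim (the counit is an epimorphism because $\mathcal U$ generates, and its kernel is annihilated by $G$, hence vanishes). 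Passing back up via shifts then shows that $H^*(G)$ is fully faithful.

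It remains to assemble the corollaries. As $G$ is quasi-fully faithful with left adjoint quasi-functor $F$, it is a dg-Bousfield localization of $\dercompdg(\smallcat u)$ in the sense of \Cref{def:dg_Bousfield_localization}; by \Cref{remqlpretriang}, $\cat A$ is then isomorphic in $\Hqe(\basering k)$ to the dg-quotient $\dercompdg(\smallcat u)/\ker(H^0(F))$, with $F$ the canonical quotient functor and $\ker(H^0(F))=\{M\in\dercompdg(\smallcat u):H^0(F)(M)\cong 0\}$. For the hearts, right t-exactness of $F$ (equivalently, left t-exactness of $G$) gives through \Cref{prop:induced_adjunction_hearts} the adjunction $H^0(F)^\heartsuit\dashv H^0(G)^\heartsuit$ between $\Mod(H^0(\smallcat u))=\dercomp(\smallcat u)^\heartsuit$ and $H^0(\cat A)^\heartsuit$; here $H^0(F)^\heartsuit$ is exact because $F$ is t-exact (\Cref{prop:induced_functor_heart}), and since $G(A)\in\dercompdg(\smallcat u)_{\geq 0}$ one reads off $H^0(G)^\heartsuit(A)=H^0(G(A))=H^0(\cat A)(j(-),A)$. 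Finally $H^0(G)^\heartsuit$ is fully faithful because a right adjoint is fully faithful iff its counit is invertible, and the counit of $H^0(F)^\heartsuit\dashv H^0(G)^\heartsuit$ is obtained by applying $H^0_t$ to the already-invertible counit of $H^0(F)\dashv H^0(G)$ — t-exactness of $F$ ensuring the truncations involved do not interfere.
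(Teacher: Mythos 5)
Your overall architecture coincides with the paper's: construct $F$ via compact generation and \Cref{lemma:adjoints_quasifunctors_cohomology}, obtain right t-exactness of $F$ from left t-exactness of $G$, prove a derived Mitchell lemma, use it to show $G$ preserves derived injectives and hence (via \Cref{prop:adjoint-dginj}) that $F$ is t-exact, prove quasi-full faithfulness by showing the counit is invertible, and read off the quotient and heart statements formally. The peripheral steps are all fine (for right t-exactness the paper uses your ``alternative'' direct computation $H^{-k}(G(A))(U)\cong\cat T(j(U),A[-k])=0$ rather than a cellular description of the aisle, but both work).

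The gap sits precisely in the step you flag as the main obstacle, the derived Mitchell lemma, and it is not merely presentational. First, as phrased --- ``transports the extension problem across the adjunction'' and factors ``the image under $F$ of the relevant t-monomorphism'' --- the argument is circular: applying $F$ to a monomorphism $M\hookrightarrow\smallcat u(-,U)$ of $H^0(\smallcat u)$-modules and expecting $F(M)\to U$ to behave like a t-monomorphism (or to control its coimage) presupposes exactness properties of $F$ that are exactly what the lemma is meant to establish. The paper never applies $F$ to the submodule: it forms the coproduct $\bigoplus_{m\in M}U_m$ indexed by the \emph{elements} of $M$, defines $\psi\colon\bigoplus_m U_m\to A$ and $\varphi\colon\bigoplus_m U_m\to B$ element-wise, and factors $\varphi$ through $\operatorname{Coim}(\psi)$ (\Cref{lemma:Mitchell_extension}). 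Second, by \Cref{prop:epimono_leftaisle} that factorization exists only if $\varphi$ kills $\tau_{\leq 0}(\cone(\psi)[-1])$, and proving this vanishing is where the hypotheses ``$H^0_t$ preserves coproducts'' and ``filtered colimits are exact in the heart'' are actually consumed: one must exhibit $H^0_t(\tau_{\leq 0}\cone(\psi)[-1])$ as the filtered colimit of the corresponding objects for the finite sub-sums $\bigoplus_{m\in F}U_m$ (\Cref{lemma:filteredcolim_functorialcones}, which already requires strict functorial cones in a strongly pretriangulated model just to produce a coherent directed system), reduce to finite $F$, probe with generators, and only then use that $f$ is a morphism of $H^0(\smallcat u)$-modules. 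Your sketch invokes the t-epi/t-mono factorization and the extension property of derived injectives along t-monomorphisms --- both needed --- but omits this finite-reduction mechanism entirely, and without it the vanishing that licenses the coimage factorization has no proof. Since everything downstream (injectivity of $H^0(G(E))$ via Baer, the lift to $G(E)$ being derived injective, and the isomorphisms of \Cref{coroll:fullyfaithfulness_target_dginj} and \Cref{coroll:fullyfaithfulness_target_rightaisle} that drive the counit argument --- note the paper's full-faithfulness proof runs through the latter, not through a ``kernel annihilated by $G$'' argument) rests on this lemma, the gap propagates to the whole proof.
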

	
	There is an obvious corollary of Theorem \ref{thm:GabrielPopescu} in the case where the set of generators $\mathcal U$ has only one object $U$. In that case, the dg-category $\smallcat u$ can be identified with the dg-algebra $R = \tau_{\leq 0} \cat A(U,U)$ and the Gabriel-Popescu theorem can be restated as follows:
	\begin{corollary} \label{coroll:GabrielPopescu_singleobj}
	    Let $\cat A$ be a t-dg-category as in Setup \ref{setup:GabrielPopescu}, and assume that it has a single generator $U$. Set
	    \begin{equation}
	        R = \tau_{\leq 0} \cat A(U,U).
	    \end{equation}
	    Then, the quasi-functor
	    \begin{equation}
	    \begin{split}
	        G \colon \cat A & \to \dercompdg(R), \\
	        A & \mapsto \cat A(U,A)
	    \end{split}
	    \end{equation}
	    is quasi-fully faithful and has a t-exact left adjoint quasi-functor
	    \[
	    F \colon \dercompdg(R) \to \cat A
	    \]
	    such that $H^0(F)(R)\cong U$ in $H^0(\cat A)$.
	    
	    In particular, $(\cat A, F)$ is a dg-quotient (cf. \S \ref{subsubsec:dgquotients_localizations}) of $\dercompdg(R)$ by its full dg-subcategory $\ker(F)$.
	    
	    Moreover, the adjunction $F \dashv G$ induces an adjunction
	    \[
	       H^0(F)^\heartsuit \dashv H^0(G)^\heartsuit \colon \Mod(H^0(\smallcat u)) \rightleftarrows H^0(\cat A)^\heartsuit,
	    \]
	    such that $H^0(F)^\heartsuit$ is exact and $H^0(G)^\heartsuit$ is fully faithful.
	\end{corollary}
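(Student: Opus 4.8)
The plan is to deduce this corollary directly from \Cref{thm:GabrielPopescu} by specializing to the single-generator situation and performing the standard identification of a one-object dg-category with a dg-algebra. First I would observe that, since $\mathcal U = \{U\}$ consists of a single object, the truncated dg-category $\smallcat u = \tau^{\leq 0}\mathcal U$ (cf. \S\ref{subsubsec:truncations_dgcat}) has exactly one object, whose endomorphism dg-algebra is $\smallcat u(U,U) = \tau_{\leq 0}\cat A(U,U) = R$. A one-object dg-category is precisely the datum of its endomorphism dg-algebra, so $\smallcat u$ may be identified with $R$, and under the standard dictionary a right dg-$\smallcat u$-module is the same thing as a right dg-$R$-module. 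This yields an identification $\dercompdg(\smallcat u) = \hproj(\smallcat u) \cong \hproj(R) = \dercompdg(R)$ which is compatible with the canonical t-structures of \Cref{prop:dercomp_naturaltstruct}, and in particular an identification of hearts $\Mod(H^0(\smallcat u)) = \Mod(H^0(R))$, where $H^0(R) = H^0(\cat A(U,U))$ is the endomorphism ring of $U$ in $H^0(\cat A)$.

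Next I would check that the quasi-functor $G$ of \Cref{thm:GabrielPopescu} specializes as claimed. The natural dg-functor $j \colon \smallcat u \to \cat A$ sends the single object to $U$ and is the inclusion $\tau_{\leq 0}\cat A(U,U) \hookrightarrow \cat A(U,U)$ on the hom-complex. Hence the dg-$\smallcat u$-module $\cat A(j(-),A)$ has underlying complex $\cat A(U,A)$ with $R$-action given by precomposition along $j$; under the identification of the previous step this is exactly the right dg-$R$-module $\cat A(U,A)$. Thus $G \colon \cat A \to \dercompdg(\smallcat u)$, defined via $\Res_j$ in \eqref{eq:GabrielPopescu_rightadj_def}, becomes the quasi-functor $A \mapsto \cat A(U,A)$ into $\dercompdg(R)$, as stated. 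Likewise, the representable $\smallcat u(-,U)$ corresponds under the dictionary to the free rank-one module $R$, so the normalization $H^0(F)(\smallcat u(-,U)) \cong U$ of \Cref{thm:GabrielPopescu} reads $H^0(F)(R) \cong U$.

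With these identifications in place, every assertion of the corollary is a verbatim transcription of the corresponding assertion of \Cref{thm:GabrielPopescu}: the quasi-full faithfulness of $G$, the existence of the t-exact left adjoint $F$ with $H^0(F)(R) \cong U$, the description of $(\cat A, F)$ as the dg-quotient of $\dercompdg(R)$ by $\ker(F)$, and the induced adjunction $H^0(F)^\heartsuit \dashv H^0(G)^\heartsuit$ on hearts with $H^0(F)^\heartsuit$ exact and $H^0(G)^\heartsuit$ fully faithful. Since $\cat A$ satisfies \Cref{setup:GabrielPopescu} by hypothesis, \Cref{thm:GabrielPopescu} applies and the corollary follows.

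The only step with any content beyond invoking the main theorem is verifying that the module-theoretic dictionary between one-object dg-categories and dg-algebras is compatible with all the structure in play: the canonical t-structures, the restriction quasi-functor $\Res_j$ through which $G$ is defined, and the $R$-action on $\cat A(U,A)$ and on $R = \smallcat u(-,U)$. I expect this to be routine but slightly tedious bookkeeping; there is no genuine obstacle, as the single-generator case is strictly a specialization of the already-proven \Cref{thm:GabrielPopescu}.
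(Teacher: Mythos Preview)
Your proposal is correct and matches the paper's approach exactly: the paper presents this corollary as an ``obvious'' restatement of \Cref{thm:GabrielPopescu} under the identification of the one-object dg-category $\smallcat u$ with the dg-algebra $R = \tau_{\leq 0}\cat A(U,U)$, and you have simply unpacked that identification in detail. If anything, your write-up is more explicit than the paper's, which gives no formal proof beyond the sentence preceding the corollary.
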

	
	More interestingly, we can deduce the ordinary Gabriel-Popescu theorem from the derived one. We state it in the classical ``single generator'' case, but a ``many generators'' version can be deduced in the same way.
	\begin{corollary}[Gabriel-Popescu theorem]\label{coroll:classicalGP}
	    Assume that $\basering k$ is an ordinary commutative ring. Let $\mathfrak A$ be a ($\basering k$-linear) Grothendieck abelian category with a generator $U$. Set
	    \[
	    R = \mathfrak A(U,U).
	    \]
	    Then, there is a fully faithful functor
	    \begin{align*}
	        \mathfrak A & \to \Mod(R), \\
	        A & \mapsto \mathfrak A(U,A)
	    \end{align*}
	    which admits an exact left adjoint.
	\end{corollary}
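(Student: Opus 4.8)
The plan is to specialize \Cref{coroll:GabrielPopescu_singleobj} to the dg-enhancement $\cat A = \dercompdg(\mathfrak A)$ of $\dercomp(\mathfrak A)$ equipped with its standard t-structure. By \Cref{example:dercat_Grothendieckabelian}, $\cat A$ is a t-dg-category satisfying all the hypotheses of \Cref{setup:GabrielPopescu}; moreover $H^0(\cat A) = \dercomp(\mathfrak A)$, the heart is $H^0(\cat A)^\heartsuit = \mathfrak A$, and the generator $U$ of $\mathfrak A$, viewed as a complex concentrated in degree $0$, is a single generator of the t-structure. Applying \Cref{coroll:GabrielPopescu_singleobj} --- whose dg-algebra, which I will call $R'$ to avoid a clash of notation, is $\tau_{\leq 0}\cat A(U,U)$ --- we therefore obtain a quasi-fully faithful quasi-functor $G \colon \cat A \to \dercompdg(R')$, $A \mapsto \cat A(U,A)$, with a t-exact left adjoint $F$, together with an induced adjunction
\[
H^0(F)^\heartsuit \dashv H^0(G)^\heartsuit \colon \Mod(H^0(R')) \rightleftarrows \mathfrak A
\]
in which $H^0(F)^\heartsuit$ is exact and $H^0(G)^\heartsuit$ is fully faithful.

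It then remains to perform the identifications that turn this into the classical statement. First I would observe that the endomorphism dg-algebra $\cat A(U,U)$ computes $\RHom_{\mathfrak A}(U,U)$, so $H^i(\cat A(U,U)) \cong \Ext^i_{\mathfrak A}(U,U)$, which vanishes for $i < 0$ and equals $\mathfrak A(U,U) = R$ for $i = 0$; since smart truncation does not change $H^0$, this yields an isomorphism of $\basering k$-algebras
\[
H^0(R') = H^0\bigl(\tau_{\leq 0}\cat A(U,U)\bigr) = H^0\bigl(\cat A(U,U)\bigr) \cong R,
\]
so that $\Mod(H^0(R')) = \Mod(R)$. Then I would unwind $H^0(G)^\heartsuit$: by the explicit formula of \Cref{thm:GabrielPopescu} in the single-object case (and using that $G$ is left t-exact, as $F$ is right t-exact by \Cref{prop:left-right-texact}), for every $A \in \mathfrak A$ we have
\[
H^0(G)^\heartsuit(A) = H^0\bigl(\cat A(U,A)\bigr) = \Hom_{\dercomp(\mathfrak A)}(U,A) = \mathfrak A(U,A),
\]
with its evident structure of $R = \mathfrak A(U,U)$-module. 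Hence $H^0(G)^\heartsuit$ is precisely the functor $\mathfrak A \to \Mod(R)$, $A \mapsto \mathfrak A(U,A)$; it is fully faithful, and its left adjoint $H^0(F)^\heartsuit$ is exact. This is exactly the assertion.

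The main (and essentially only) technical point will be the identification $H^0(R') \cong R$ as \emph{algebras}: one should verify that the smart truncation $R' = \tau_{\leq 0}\cat A(U,U)$ is a nonpositively graded dg-algebra whose canonical projection $R' \to H^0(R')$ is an algebra quasi-isomorphism --- equivalently, that the image of $d \colon (R')^{-1} \to (R')^{0}$ is a two-sided differential ideal of $(R')^{0}$ --- so that the ring-level adjunction produced by \Cref{coroll:GabrielPopescu_singleobj} genuinely lands in $\Mod(R)$. Beyond this, the argument is a direct unwinding of the definitions.
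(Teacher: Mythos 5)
Your proposal is correct and follows essentially the same route as the paper: verify via \Cref{example:dercat_Grothendieckabelian} that $\dercompdg(\mathfrak A)$ with its standard t-structure and the generator $U$ satisfies \Cref{setup:GabrielPopescu}, apply the single-generator form of the derived Gabriel--Popescu theorem, and pass to hearts after identifying the (truncated) endomorphism dg-algebra of $U$ with $R$. Your closing worry about the algebra identification is easily dispatched (and is in fact slightly more than you need, since the heart-level adjunction only involves $\Mod(H^0(R'))$, so the ring isomorphism $H^0(R')\cong R$ suffices); the paper simply notes that $\tau_{\leq 0}\dercompdg(\mathfrak A)(U,U)$ is quasi-isomorphic to $\mathfrak A(U,U)=R$ because $U$ lies in the heart.
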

	\begin{proof}
	By Example \ref{example:dercat_Grothendieckabelian}, the dg-derived category $\dercompdg(\mathfrak A)$ satisfies the assumptions of Setup \ref{setup:GabrielPopescu}, and is generated by the object $U \in \mathfrak A$ as a t-dg-category. Since $U$ is concentrated in degree $0$ in $\dercompdg(\mathfrak A)$, it is easy to see that 
	\begin{align*}
	\tau_{\leq 0}\dercompdg(\mathfrak A)(U,U) & \cong \dercomp(\mathfrak A)(U,U) \\
	& \cong \mathfrak A(U,U) \\ 
	&= R.
	\end{align*}
	Then, Theorem \ref{thm:GabrielPopescu} yields an adjunction
	\[
	F \dashv G \colon \dercompdg(R) \rightleftarrows \dercompdg(\mathfrak A)
	\]
	such that $F$ is t-exact and $G$ is quasi-fully faithful. Passing to hearts, we get an adjunction
	\[
	H^0(F)^\heartsuit \dashv H^0(G)^\heartsuit \colon \Mod(R) \rightleftarrows \mathfrak A,
	\]
	and we know that $H^0(F)^\heartsuit$ is exact and $H^0(G)^\heartsuit$ is fully faithful.
	\end{proof}
	\subsection{The proof of the derived Gabriel-Popescu theorem}
	The rest of the paper is devoted to prove Theorem \ref{thm:GabrielPopescu}. We will organize the different parts of the proof in subsections. The technical core of the proof will rely on a derived version of an argument used by Mitchell in his proof of the classical Gabriel-Popescu theorem: we present this result in \S\ref{subsect:Mitchell}. 
	
	From now on, we fix a t-dg-category $\cat A$ satisfying the assumptions of Setup \ref{setup:GabrielPopescu}.

\subsubsection{The existence of the left adjoint}
	In this subsection we prove the existence of the left adjoint of the quasi-functor \eqref{eq:GabrielPopescu_rightadj_def2}.
	\begin{proposition}\label{prop:existence-left-adjoint}
		Given $\cat A$ a dg-category as above and denote $\cat T = H^0(\cat A)$. Then the quasi-functor
		\begin{equation}
			\begin{split}
				G \colon \cat A & \to \dercompdg(\smallcat u), \\
				A & \mapsto \cat A(j(-),A)
			\end{split}
		\end{equation}
	has a left adjoint
		\begin{equation}
			F \colon \dercompdg(\smallcat u) \to \cat A
		\end{equation}
	such that $H^0(F)(\smallcat u(-,U)) \cong U$ in $H^0(\cat A)$ naturally in $U \in H^0(\smallcat u)$.
	\begin{proof}
		By Lemma \ref{lemma:adjoints_quasifunctors_cohomology} we know that if $H^0(G)$ has a left adjoint $L$, then there exists a quasi-functor $F$ which is left adjoint to $G$ and such that $H^0(F) = L$. From this, together with the fact that, for each $U \in \mathcal U$, $G(U)= \cat A (j(-),U) = \smallcat u(-,U)$ is h-projective, we conclude that to prove the claim it is enough to show that $H^0(G)$ has a left adjoint $L$ such that $L(\smallcat u(-,U)) = U$ for every $U \in \mathcal U$ naturally in $U \in H^0(\smallcat u)$.
		
		Since $\{\smallcat u(-,U)\}_{U \in \mathcal U}$ is a set of compact objects classically generating $\dercomp(\smallcat u)$, we have that there exists a unique exact functor $L: \dercomp(\smallcat u) \to \cat T$ preserving small direct sums and sending $\smallcat u(-,U)$ to $U$ for each $U \in \mathcal U$ naturally in $U \in H^0(\smallcat u)$. We proceed to prove that $L$ is left adjoint to $H^0(G)$, namely, we want to show that for every $M \in \dercomp(\smallcat u), X \in \cat T$ there is an isomorphism
		\begin{equation*}
			\dercomp(\smallcat u)(M,H^0(G)(X)) \cong \cat T(L(M),X)
		\end{equation*}
		of abelian groups which is functorial in $M$ and $X$. Observe that, because $L$ is an exact functor preserving small direct sums and $\{\smallcat u(-,U)\}_{U \in \mathcal U}$ is a set of compact generators of $\dercomp(\smallcat u)$, we are reduced to prove that 
		\begin{equation*}
			\dercomp(\smallcat u)(\smallcat u(-,U),H^0(G)(X)) \cong \cat T(L(\smallcat u(-,U)),X)
		\end{equation*}
		for all $U \in \mathcal U$. This is indeed the case, as we have that
		\begin{align*}
			\dercomp(\smallcat u)(\smallcat u(-,U),H^0(G)(X)) &= \dercomp(\smallcat u)(\smallcat u(-,U),H^0(\cat A(j(-),X)))\\ 
			&\cong H^0(\cat A(j(U),X)) \\
			&= H^0(\cat A(U,X)) \\
			&= \cat T(L(\smallcat u(-,U)),X)
		\end{align*}
		for all $U \in \mathcal U$, which concludes the proof.
	\end{proof}
	\end{proposition}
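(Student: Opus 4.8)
The plan is to reduce to a purely homotopy-categorical statement via \Cref{lemma:adjoints_quasifunctors_cohomology} and then build the adjoint from the compact generators of $\dercomp(\smallcat u)$, the dg-enhancements supplying the coherence that the triangulated structure alone cannot. Since $\cat A$ and $\dercompdg(\smallcat u)=\hproj(\smallcat u)$ are pretriangulated, \Cref{lemma:adjoints_quasifunctors_cohomology} (in its $H^0$-version) reduces the claim to producing a triangulated left adjoint $L\colon\dercomp(\smallcat u)\to\cat T=H^0(\cat A)$ of the triangulated functor $H^0(G)\colon A\mapsto\cat A(j(-),A)$, subject to $L(\smallcat u(-,U))\cong U$ naturally in $U\in H^0(\smallcat u)$; the lemma then hands us a left adjoint quasi-functor $F$ with $H^0(F)\cong L$, and in particular $H^0(F)(\smallcat u(-,U))\cong U$ as required.

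To build $L$, recall that $\dercomp(\smallcat u)$ is compactly generated by the set of representables $\{\smallcat u(-,U)\}_{U\in\mathcal U}$, and that $\dercompdg(\smallcat u)$, via the dg-Yoneda embedding $y\colon\smallcat u\hookrightarrow\dercompdg(\smallcat u)$, is the free cocompletion of $\smallcat u$ under (homotopy) colimits. Now $H^0(\cat A)$, being well generated, has all small coproducts, and because $\cat A$ is pretriangulated these are automatically \emph{homotopy} coproducts — the canonical morphism $\cat A(\coprod_i A_i,B)\to\prod_i\cat A(A_i,B)$ is a quasi-isomorphism, as one sees by computing cohomology in each degree and using the coproduct property of $H^0(\cat A)$ after all shifts — so, together with the cones furnished by the pretriangulated structure, $\cat A$ is cocomplete. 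Hence the dg-functor $j$ extends to the cocontinuous ``realization'' quasi-functor $F:=\Lan_y(j)\colon\dercompdg(\smallcat u)\to\cat A$, characterized by cocontinuity together with $F(y(U))\cong j(U)=U$; setting $L:=H^0(F)$ gives $L(\smallcat u(-,U))\cong U$ naturally.

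It remains to see that $L\dashv H^0(G)$. This is automatic from the nerve/realization formalism: the right adjoint of any cocontinuous functor out of $\dercompdg(\smallcat u)$ is computed by a formula of the shape $X\mapsto\dercompdg(\smallcat u)(y(-),X)$, which here unwinds to $X\mapsto\cat A(j(-),X)=G(X)$. One may also argue directly by a localizing-subcategory (d\'evissage) argument: the two functors $\dercomp(\smallcat u)\ni M\mapsto\cat T(L(M),X)$ and $M\mapsto\dercomp(\smallcat u)(M,H^0(G)(X))$ are cohomological and send coproducts to products (the first because $L$ preserves coproducts), so the comparison natural transformation is an isomorphism on a localizing subcategory of $\dercomp(\smallcat u)$; that subcategory contains the compact generators $\smallcat u(-,U)$, for which the isomorphism is the (derived) Yoneda computation
\[
\dercomp(\smallcat u)\bigl(\smallcat u(-,U),\cat A(j(-),X)\bigr)\;\cong\;H^0\bigl(\cat A(j(U),X)\bigr)\;=\;H^0(\cat A)(U,X)\;\cong\;\cat T\bigl(L(\smallcat u(-,U)),X\bigr),
\]
and hence equals all of $\dercomp(\smallcat u)$; naturality in $U$ gives the naturality clause of the statement.

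The step I expect to be the real obstacle is the construction of $F$ (equivalently $L$) in the second paragraph: a cocontinuous triangulated functor is \emph{not} pinned down at the triangulated level by its restriction to a generating set, so one genuinely needs the universal (free-cocompletion) property of the derived dg-category $\dercompdg(\smallcat u)$, together with the verification that $\cat A$ carries the homotopy colimits needed to realize an arbitrary h-projective $\smallcat u$-module — which is precisely what the observation ``coproducts in $H^0$ are homotopy coproducts'' secures. Everything else — the reduction through \Cref{lemma:adjoints_quasifunctors_cohomology}, the nerve/realization adjunction (or the d\'evissage), and the Yoneda bookkeeping — is routine.
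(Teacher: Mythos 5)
Your proposal is correct and follows the same overall skeleton as the paper: reduce to the homotopy level via \Cref{lemma:adjoints_quasifunctors_cohomology}, define the left adjoint by its values on the representables $\smallcat u(-,U)$, and verify the adjunction isomorphism $\dercomp(\smallcat u)(M,H^0(G)(X))\cong \cat T(L(M),X)$ by the Yoneda computation on generators followed by a d\'evissage over the localizing subcategory where the comparison map is invertible (both functors being cohomological and sending coproducts to products). The one place where you genuinely diverge is the construction of the extension itself: the paper simply asserts, at the triangulated level, that there is a unique exact coproduct-preserving $L\colon\dercomp(\smallcat u)\to\cat T$ with $L(\smallcat u(-,U))\cong U$, whereas you build $F$ at the dg level as $\Lan_y(j)$, using the free-cocompletion property of $\dercompdg(\smallcat u)$ together with your (correct) observation that coproducts in $H^0(\cat A)$ are automatically homotopy coproducts, so that $\cat A$ admits all the homotopy colimits needed. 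Your version is arguably the more honest one --- a coproduct-preserving exact functor is not determined, nor guaranteed to exist, by its values on compact generators at the purely triangulated level, so the paper's assertion implicitly leans on exactly the kind of dg-level realization you spell out --- at the cost of invoking the heavier universal property of the derived dg-category as a free cocompletion. Your fallback d\'evissage argument for the adjunction coincides with the paper's, so nothing in your write-up hinges on the nerve/realization formalism alone.
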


\subsubsection{$F$ is right t-exact}

\begin{notation}\label{not:H0(F)}
	From now on, we set $\cat T= H^0(\cat A)$. For the sake of readability, we will often abuse notation and set $F=H^0(F)$, $G=H^0(G)$. We have an adjunction
	\[
	F \dashv G \colon \dercomp(\smallcat u) \rightleftarrows \cat T,
	\]
	with $G(A) = \cat A(j(-),A)$ and $F(\smallcat u(-,U)) \cong U$ for all $U \in \smallcat u$.
\end{notation}	

\begin{proposition}\label{prop:right-texact}
	$G$ is left t-exact, namely $G(\cat T_{\geq 0}) \subseteq \dercomp(\smallcat u)_{\geq 0}$. In particular, $F$ is right t-exact, namely $F(\dercomp(\smallcat u)_{\leq 0}) \subseteq \cat T_{\leq 0}$.
\end{proposition}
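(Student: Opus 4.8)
The plan is to prove left t-exactness of $G$ directly and then obtain right t-exactness of $F$ for free: once we know $G(\cat T_{\geq 0}) \subseteq \dercomp(\smallcat u)_{\geq 0}$, applying \Cref{prop:left-right-texact} to the adjunction $F \dashv G$ of triangulated functors immediately yields $F(\dercomp(\smallcat u)_{\leq 0}) \subseteq \cat T_{\leq 0}$.

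For the left t-exactness of $G$, first unwind the construction. Since $G = \Res_j \circ h_{\cat A}$ is the composite of the Yoneda embedding with the restriction quasi-functor of \S\ref{subsubsec:ind_res}, for any $A \in \cat A$ the object $G(A) \in \dercomp(\smallcat u)$ is represented by the dg-module $U \mapsto \cat A(j(U),A) = \cat A(U,A)$ (recall that $j$ is the identity on objects). Cohomology of dg-modules is computed objectwise, and in a pretriangulated dg-category $H^i$ of a hom-complex recovers homotopy-category hom-groups, so
\[
H^i(G(A))(U) \cong H^i\bigl(\cat A(U,A)\bigr) \cong \cat T(U, A[i])
\]
for every $U \in \mathcal U$ and every $i \in \mathbb Z$. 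By \Cref{prop:dercomp_naturaltstruct} the right aisle $\dercompdg(\smallcat u)_{\geq 0}$ consists exactly of the objects with vanishing negative cohomology, so it remains to check that $\cat T(U, A[i]) = 0$ whenever $A \in \cat T_{\geq 0}$ and $i < 0$.

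This is a one-line consequence of the t-structure axioms: writing $i = -n$ with $n \geq 1$, we have $A[-n] \in \cat T_{\geq n} \subseteq \cat T_{\geq 1}$, while $U \in \mathcal U \subseteq \cat A_{\leq 0}$ gives $U \in \cat T_{\leq 0}$; orthogonality of the aisles then forces $\cat T(U, A[-n]) = 0$. Hence $H^i(G(A)) = 0$ for all $i < 0$, that is, $G(A) \in \dercomp(\smallcat u)_{\geq 0}$, as desired. There is no genuine obstacle in this argument; the only point demanding a little care is the bookkeeping in the first step --- verifying that the objectwise cohomology of $G(A)$ is really $\cat T(U, A[i])$, which relies on the explicit description of $\Res_j$ via the restriction bimodule together with the fact that $h_{\cat A}$ lands in h-projective dg-modules --- after which everything is formal.
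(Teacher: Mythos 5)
Your proof is correct and follows essentially the same route as the paper: both compute the negative cohomologies of $G(A)=\cat A(j(-),A)$ objectwise as $\cat T(U,A[-k])$ and kill them by the orthogonality $\cat T_{\leq 0}\perp\cat T_{\geq 1}$ (using $U\in\cat A_{\leq 0}$), then invoke \Cref{prop:left-right-texact} to transfer left t-exactness of $G$ to right t-exactness of its left adjoint $F$. The extra bookkeeping you spell out about $\Res_j$ and objectwise cohomology is implicit in the paper's one-line computation.
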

\begin{proof}
	Assume that $A \in \cat T_{\geq 0}$, and let $k > 0$. Then:
	\begin{align*}
		H^{-k}(G(A)) & = H^{-k}(\cat A(j(-),A)) \\
		& \cong \cat T(j(-),A[-k]) \\
		&= 0,
	\end{align*}
	since the objects of $\smallcat u$ lie in the left aisle $\cat T_{\leq 0}$ by assumption. This proves that $G(A) \in \dercomp(R)_{\geq 0}$. The right t-exactness of $F$ follows from the fact that it is the left adjoint of $G$ (Proposition \ref{prop:left-right-texact}).
\end{proof}

\subsubsection{The key lemma: A derived version of Mitchell's argument}\label{subsect:Mitchell}
The rest of the proof of the Gabriel-Popescu theorem for t-structures will be based on a derived version of a lemma by Mitchell \cite{mitchell-gabrielpopescu}, further generalized to the ``several generators'' case in \cite{crivei-mitchell-generalized}. We devote this subsection to provide this result and some of its consequences.

We start with a technical lemma, where we make careful use of functorial cones in $\cat A$.
\begin{lemma} \label{lemma:filteredcolim_functorialcones}
Let $\{X_i : i \in I \}$ be a set of objects in $H^0(\cat A)$, and let
\[
x_i \colon X_i \to \bigoplus_{i \in I} X_i
\]
be the natural morphism into the direct sum in $H^0(\cat A)$. If $F \subseteq I$ is a finite set, let
\[
x_F = \oplus_{F} x_i \colon \bigoplus_{i \in F} X_i \to \bigoplus_{i \in I} X_i,
\]
be the morphism induced by the $x_i$ for $i \in F$; if $F \subseteq F'$ is an inclusion of finite subsets of $I$, let
\[
x_{F,F'} \colon \bigoplus_{i \in F} X_i \to \bigoplus_{i \in F'} X_i
\]
the natural morphism. The morphisms $(x_{F,F'})$ clearly form a directed system, and
\begin{equation}
x_{F'} \circ x_{F,F'} = x_F
\end{equation}
in $H^0(\cat A)$. Moreover, let
\[
\psi \colon \bigoplus_{i \in I} X_i \to Y
\]
be a morphism in $H^0(\cat A)$. For any finite subset $F \subseteq I$, let
\[
\psi_F \colon \bigoplus_{i \in F} X_i \to Y
\]
be the composition $\psi \circ x_F$.

Set $K=\cone(\psi)[-1]$ and $K_F = \cone(\psi_F)[-1]$ for any finite subset $F \subseteq I$. Then, for any finite subsets $F \subseteq F' \subseteq I$, we can find morphisms
\begin{equation*}
    \begin{split}
        \alpha_F \colon K_F & \to K, \\
        \alpha_{F,F'} \colon K_F &\to K_{F'},
    \end{split}
\end{equation*}
such that the morphisms $(\alpha_{F,F'})$ form a directed system and 
\begin{equation}
\alpha_{F'} \circ \alpha_{F, F'} = \alpha_F
\end{equation}
in $H^0(\cat A)$; furthermore, these morphisms fit in the following morphisms of distinguished triangles:
\begin{equation}
    \begin{tikzcd}
K_F \arrow[r] \arrow[d, "{\alpha_{F,F'}}"] & \bigoplus_{i \in F} X_i \arrow[r, "\psi_F"] \arrow[d, "{x_{F,F'}}"] & Y \arrow[d, equal] \\
K_{F'} \arrow[d, "\alpha_{F'}"] \arrow[r]  & \bigoplus_{i \in F'} X_i \arrow[d, "x_{F'}"] \arrow[r, "\psi_{F'}"]    & Y \arrow[d, equal] \\
K \arrow[r]                                & \bigoplus_{i \in I} X_i \arrow[r, "\psi"]                           & Y.          
\end{tikzcd}
\end{equation}

Moreover, the morphisms $(\alpha_F)$ induce isomorphisms
\begin{equation}
    \varinjlim_{F \subseteq I} H^n_t(K_F) \xrightarrow{\sim} H^n_t(K)
\end{equation}
in $H^0(\cat A)^\heartsuit$, for all $n \in \mathbb Z$.
\end{lemma}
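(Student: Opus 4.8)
The plan is to build the morphisms $\alpha_F$ and $\alpha_{F,F'}$ at the dg-level out of strictly functorial cones, so that the directed-system identities hold on the nose, and then to read off the cohomology statement from the associated long exact sequences together with exactness of filtered colimits in the heart. Concretely, since $\cat A$ is pretriangulated I may first replace it up to quasi-equivalence by a strongly pretriangulated dg-category (cf. \S\ref{subsubsec:dgcat_pretr}), and I may moreover assume it has small coproducts computing those of $H^0(\cat A)$ --- this is harmless and is only needed so that a dg-level object $\bigoplus_{i\in I}X_i$ with closed degree $0$ coprojections is available. Finite direct sums in such a $\cat A$ are functorial cones of zero morphisms, hence strictly functorial, so I fix strict models $\bigoplus_{i\in F}X_i$ for finite $F\subseteq I$, a fixed coproduct $\bigoplus_{i\in I}X_i$, and the coprojections $x_{F,F'}$, $x_F$, chosen so that $x_{F,F}=\mathrm{id}$, $x_{F',F''}x_{F,F'}=x_{F,F''}$ and $x_{F'}x_{F,F'}=x_F$ hold \emph{strictly}. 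Fixing a closed degree $0$ representative of $\psi$ and setting $\psi_F=\psi\circ x_F$, the square with horizontal maps $\psi_F,\psi_{F'}$, left vertical $x_{F,F'}$ and right vertical $\mathrm{id}_Y$, and likewise the square with horizontal maps $\psi_F,\psi$, left vertical $x_F$ and right vertical $\mathrm{id}_Y$, are then \emph{strictly} commutative diagrams of closed degree $0$ morphisms.

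Next I would feed these strictly commutative squares into \Cref{lemma:functorialcones_strict}. For $F\subseteq F'$ it produces a \emph{unique} closed degree $0$ morphism $\cone(\psi_F)\to\cone(\psi_{F'})$, and shifting by $[-1]$ defines $\alpha_{F,F'}\colon K_F\to K_{F'}$; the lemma simultaneously yields the strictly commutative two-row ladder whose $H^0$ is the corresponding morphism of distinguished triangles. The uniqueness clause then forces $\alpha_{F,F}=\mathrm{id}_{K_F}$ (take the identity ladder) and, applied to vertical composites of squares, $\alpha_{F',F''}\circ\alpha_{F,F'}=\alpha_{F,F''}$; thus $F\mapsto K_F$, $(F\subseteq F')\mapsto\alpha_{F,F'}$ is a functor on the filtered poset of finite subsets of $I$. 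Running the same recipe with the square over $\bigoplus_{i\in I}X_i$ produces $\alpha_F\colon K_F\to K$, and uniqueness applied to the two ways of filling the composite square associated with $F\subseteq F'\subseteq I$ gives $\alpha_{F'}\circ\alpha_{F,F'}=\alpha_F$. Assembling the two ladders gives exactly the claimed commutative diagram of morphisms of distinguished triangles.

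For the final isomorphism I would apply the cohomological functor $H^n_t$ to the distinguished triangles $K_F\to\bigoplus_{i\in F}X_i\xrightarrow{\psi_F}Y$ and $K\to\bigoplus_{i\in I}X_i\xrightarrow{\psi}Y$, obtaining long exact sequences in $H^0(\cat A)^\heartsuit$ and, via the $\alpha_F$, a morphism from the filtered system of long exact sequences of the $K_F$ (indexed by finite $F\subseteq I$) to the long exact sequence of $K$. I then pass to $\varinjlim_F$: this is exact since $H^0(\cat A)^\heartsuit$ is Grothendieck (Setup~\ref{setup:GabrielPopescu}, \Cref{lemma:heart_Grothendieck}), so $\varinjlim_F$ of the long exact sequences is again exact; since $H^0_t$, hence also $H^n_t=H^0_t(-[n])$, preserves small direct sums (Setup~\ref{setup:GabrielPopescu}), the canonical maps identify $\varinjlim_F H^n_t(\bigoplus_{i\in F}X_i)\cong H^n_t(\bigoplus_{i\in I}X_i)$ compatibly with $\psi$, while $\varinjlim_F H^n_t(Y)=H^n_t(Y)$ because the indexing poset is filtered. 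Comparing the two long exact sequences via the five lemma then yields the desired isomorphisms $\varinjlim_F H^n_t(K_F)\xrightarrow{\sim}H^n_t(K)$.

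The hard part is the construction in the second paragraph: the fillers provided by the triangulated axioms alone are not unique, so the cocycle identities $\alpha_{F',F''}\alpha_{F,F'}=\alpha_{F,F''}$ and $\alpha_{F'}\alpha_{F,F'}=\alpha_F$ would fail for arbitrary choices. Carrying out the cone construction at the dg-level, with genuinely \emph{strict} models of the finite direct sums and of the structure maps, and invoking the uniqueness clause of \Cref{lemma:functorialcones_strict}, is precisely what makes all these identities hold strictly (a fortiori in $H^0(\cat A)$); granting that, the remainder of the argument is a routine diagram chase. A secondary technical point is the need for an honest dg-coproduct $\bigoplus_{i\in I}X_i$, which is why I pass at the outset to a strongly pretriangulated model of $\cat A$ with small coproducts.
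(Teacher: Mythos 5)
Your proposal is correct and follows essentially the same route as the paper: pass to a strongly pretriangulated model, make the finite direct sums and the maps $x_F$, $x_{F,F'}$, $\psi_F$ strict, invoke the uniqueness clause of \Cref{lemma:functorialcones_strict} to get the cocycle identities for the $\alpha$'s, and then conclude via long exact sequences, exactness of filtered colimits in the heart, and the five lemma. (The only cosmetic difference is that the paper does not need a dg-level coproduct $\bigoplus_{i\in I}X_i$ as an extra assumption: it simply picks an object of $\cat A$ representing the $H^0$-direct sum together with closed degree $0$ lifts of the coprojections, which always exist.)
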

\begin{proof}
Up to quasi-equivalence, we may assume that $\cat A$ is \emph{strongly pretriangulated} (cf. \S \ref{subsubsec:dgcat_pretr}). We let $X$ be the direct sum of $\{X_i : i \in I\}$ in $H^0(\cat A)$. For all $i \in I$, we choose a closed and degree $0$ morphism
\[
x_i \colon X_i \to X  \coloneqq \bigoplus_{i \in I}X_i
\]
representing the natural inclusion in $H^0(\cat A)$. Since $\cat A$ is strongly pretriangulated, the \emph{finite} direct sums $\bigoplus_{i \in F} X_i$ exist (strictly!) in $\cat A$, and the natural morphisms
\[
x_{F, F'} \colon \bigoplus_{i \in F} X_i \to \bigoplus_{i \in F'} X_i
\]
are well defined closed degree $0$ morphisms in $\cat A$, for finite subsets $F \subseteq F'$ of $I$. Moreover, the morphisms $x_i$ induce morphisms
\[
x_F \colon \bigoplus_{i \in F} X_i \to X,
\]
and it is immediate to see that we have a \emph{strict equality}:
\[
x_{F'} \circ x_{F,F'} = x_F.
\]
Next, we take a closed degree $0$ representative of the morphism $\psi \colon X \to Y$. We abuse notation and call it again $\psi$. We then define:
\[
\psi_F = \psi \circ x_F
\]
as a closed degree $0$ morphism in $\cat A$, so that we have a (strictly!) commutative diagram of closed degree $0$ morphisms:
\[
\begin{tikzcd}
\bigoplus_{i \in F} X_i \arrow[d, "{x_{F,F'}}"] \arrow[r, "\psi_F"] & Y \arrow[d, equal] \\
\bigoplus_{i \in F'} X_i \arrow[r, "\psi_{F'}"] \arrow[d, "x_{F'}"] & Y \arrow[d, equal] \\
X \arrow[r, "\psi"]                                                 & Y,
\end{tikzcd}
\]
for finite subsets $F \subseteq F'$ of $I$. We now define $K = \cone(\psi)[-1]$ and $K_F = \cone(\psi_F)[-1]$, which exist as strict functorial cones in $\cat A$, since $\cat A$ is strongly pretriangulated. Invoking Lemma \ref{lemma:functorialcones_strict}, we find \emph{unique} closed degree $0$ morphisms
\begin{align*}
    \alpha_F \colon K_F & \to K, \\
    \alpha_{F,F'} \colon K_F & \to K_{F'},
\end{align*}
which fit in the (strictly!) commutative diagram:
\begin{equation} \label{eq:filteredcolim_functorialcones_diagram_proof} \tag{$\ast$}
\begin{tikzcd}
K_F \arrow[r] \arrow[d, dotted, "{\alpha_{F,F'}}"] & \bigoplus_{i \in F} X_i \arrow[r, "\psi_F"] \arrow[d, "{x_{F,F'}}"] & Y \arrow[d, equal] \arrow[r] & K_F[1] \arrow[d, dotted, "{\alpha_{F,F'}[1]}"] \\
K_{F'} \arrow[d, dotted, "\alpha_{F'}"] \arrow[r]  & \bigoplus_{i \in F'} X_i \arrow[d, "x_{F'}"] \arrow[r, "\psi_{F'}"]    & Y \arrow[d, equal] \arrow[r] & K_{F'}[1] \arrow[d, dotted, "{\alpha_{F'}[1]}"]  \\
K \arrow[r]                                & X \arrow[r, "\psi"]                           & Y \arrow[r] & K[1].          
\end{tikzcd}
\end{equation}
Since $x_F = x_{F'} \circ x_{F, F'}$, we conclude by uniqueness that
\[
\alpha_F = \alpha_{F'} \circ \alpha_{F,F'},
\]
for finite subsets $F \subseteq F' \subseteq I$. A similar uniqueness argument shows that the morphisms $(\alpha_{F,F'})$ form indeed a directed system. By passing to the homotopy category $H^0(\cat A)$, we get the proof of the first part of the lemma.

For the second part, we take the following commutative diagram obtained by taking cohomological long exact sequences from \eqref{eq:filteredcolim_functorialcones_diagram_proof} and then filtered colimits on the poset of finite subsets of $I$ ($n \in \mathbb Z$):
\[
\begin{tikzcd}
\displaystyle \varinjlim_{F \subseteq I}\bigoplus_{i \in F} H^n_t(X_i) \arrow[r] \arrow[d, "\varinjlim H^n_t(x_F)"] & H^n_t(Y) \arrow[r] \arrow[d, equal] & \displaystyle \varinjlim_{F \subseteq I}H^n_t(K_F) \arrow[r] \arrow[d, "\varinjlim H^n_t(\alpha_F)"] & \displaystyle \varinjlim_{F \subseteq I}\bigoplus_{i \in F} H^{n+1}_t(X_i) \arrow[r] \arrow[d, "\varinjlim H^{n+1}_t(x_F)"] & H^{n+1}_t(Y) \arrow[d, equal] \\
\displaystyle \bigoplus_{i \in I} H^n_t(X_i) \arrow[r]                                                            & H^n_t(Y) \arrow[r]           & H^n_t(K) \arrow[r]                                                                   & \displaystyle \bigoplus_{i \in I} H^{n+1}_t(X_i) \arrow[r]                                                                & H^{n+1}_t(Y).  
\end{tikzcd}
\]
We used that $H^k_t(-)$ preserves direct sums, and identified
\[
H^k_t(X) \cong \bigoplus_{i \in I} H^k_t(X_i)
\]
for all $k \in \mathbb Z$. Furthermore, it is well-known that the morphism
\[
\varinjlim_{F \subseteq I} \bigoplus_{i \in F} H^k_t(X_i) \to \bigoplus_{i \in I} H^k_t(X_i)
\]
is an isomorphism. By assumption, filtered colimits are exact in the heart: this implies that both rows of the above commutative diagram are exact, hence we may apply the five lemma and conclude that
\[
\varinjlim_{F \subseteq I} H^n_t(K_F) \to H^n_t(K)
\]
is an isomorphism, as we wanted.
\end{proof}

\begin{notation}\label{not:H0(j)}
	From now on, as done in \cref{not:H0(F)}, we will often abuse notation and set $j=H^0(j)$. We have that for every $X \in \cat T$, the functor
	\[
	\cat T(j(-),X):\opp{H^0(\smallcat u)} \to \Mod(H^0(k)),
	\]
	is an $H^0(\smallcat u)$-module, i.e. it is an object in $\Mod(H^0(\smallcat u))$.
\end{notation}
We can now prove our key lemma:
\begin{lemma}[``à la Mitchell''] \label{lemma:Mitchell_extension}
	Let $B \in \cat T_{\geq 0}$, and let $A \in \cat T$. Let $^{A}M \hookrightarrow \cat T(j(-),A)$ be a submodule (i.e. a monomorphism in $\Mod (H^0(\smallcat u))$), and let 
	\[
	f \colon ^{A}M \to \cat T(j(-),B)
	\]
	be a morphism in $\Mod (H^0(\smallcat u))$. Denote by $M$ the set $\coprod_{U \in \mathcal U}$ $^{A}M(U)$. Consider the induced diagram:
	\[
	\begin{tikzcd}
		\bigoplus_{m \in M} U_m \arrow[r, "\psi"] \arrow[d, "\varphi"'] & A \\
		B,                                                             &  
	\end{tikzcd}
	\]
	where $m \in {^{A}}M(U_m) \subseteq M$ and $\varphi$ and $\psi$ are defined as follows. If $u_m \colon U_m \to \bigoplus_{m \in M} U_m$ is the natural map associated to $m \in M$, then:
	\begin{align*}
		\psi u_m &= m \in \cat T(U_m,A), \\
		\varphi u_m &= f(U_m)(m) \in \cat T(U_m,B).
	\end{align*}
	Then, $\varphi$ factors through $\operatorname{Coim}(\psi)$:
	\begin{equation}
		\begin{tikzcd}
			\bigoplus_{m \in M} U_m \arrow[r] \arrow[d, "\varphi"'] \arrow[rr, "\psi", bend left] & \operatorname{Coim}(\psi) \arrow[r, hook] \arrow[ld, dotted] & A \\
			B.                                                                                   &                                                    &  
		\end{tikzcd}
	\end{equation}
\end{lemma}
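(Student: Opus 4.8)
The plan is to reduce the statement to the vanishing of a single morphism, and then use \Cref{lemma:filteredcolim_functorialcones} to cut $M$ down to its finite subsets. Put $P = \cone(\psi)[-1]$, so that there is a distinguished triangle $P \to \bigoplus_{m\in M}U_m \xrightarrow{\psi} A$. Since $\bigoplus_{m\in M}U_m \in \cat T_{\leq 0}$, \Cref{prop:epimono_leftaisle} applies to $\psi$ and produces the factorization $\bigoplus_m U_m \xrightarrow{p} \operatorname{Coim}(\psi) \xrightarrow{\tilde\psi} A$ together with a distinguished triangle $\tau_{\leq 0}P \xrightarrow{j} \bigoplus_m U_m \xrightarrow{p} \operatorname{Coim}(\psi)$, in which $\psi\circ j = 0$. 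Applying $\cat T(-,B)$ to this triangle, $\varphi$ factors through $p$ (equivalently, through $\operatorname{Coim}(\psi)$) if and only if $\varphi\circ j = 0$. So it suffices to prove $\varphi\circ j = 0$.

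The key input is a module-theoretic identity. For a finite subset $S\subseteq M$ write $x_S\colon\bigoplus_{m\in S}U_m\to\bigoplus_{m\in M}U_m$ for the inclusion and put $\psi_S = \psi\circ x_S$, $\varphi_S = \varphi\circ x_S$. I claim that for every $V\in\mathcal U$ and every morphism $\xi\colon V\to\bigoplus_{m\in S}U_m$ in $\cat T$ one has $\psi_S\circ\xi\in{}^{A}M(V)$ and $\varphi_S\circ\xi = f(V)(\psi_S\circ\xi)$. Indeed, finiteness of $S$ lets us write $\xi = \sum_{m\in S}\iota_m\circ\xi_m$ with $\iota_m\colon U_m\to\bigoplus_{m'\in S}U_{m'}$ the canonical inclusion and $\xi_m\colon V\to U_m$; since $V,U_m\in\mathcal U$ and $\smallcat u = \tau^{\leq 0}\mathcal U$, each $\xi_m$ corresponds, under the identification $H^0(\smallcat u)(V,U_m) = H^0(\cat A(V,U_m)) = \cat T(V,U_m)$, to a morphism of $H^0(\smallcat u)$. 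Writing $m\cdot\xi_m\in\cat T(j(V),A)$ for the module action and using $\psi_S\circ\iota_m = \psi\circ u_m = m$, we get $\psi_S\circ\xi = \sum_{m\in S}m\cdot\xi_m$, which lies in ${}^{A}M(V)$ because $m\in{}^{A}M(U_m)$ and $^{A}M$ is a submodule. Since $f$ is a morphism of modules and $f(U_m)(m) = \varphi\circ u_m = \varphi_S\circ\iota_m$, we obtain $f(V)(\psi_S\circ\xi) = \sum_{m\in S}f(U_m)(m)\cdot\xi_m = \sum_{m\in S}(\varphi_S\circ\iota_m)\circ\xi_m = \varphi_S\circ\xi$.

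Using this I show $\varphi_S\circ j_S = 0$ for every finite $S$, where, applying \Cref{prop:epimono_leftaisle} to $\psi_S$, we let $P_S = \cone(\psi_S)[-1]$ and $j_S\colon\tau_{\leq 0}P_S\to\bigoplus_{m\in S}U_m$ be the corresponding ``kernel'' morphism (so $\psi_S\circ j_S = 0$). Since $\tau_{\leq 0}P_S\in\cat T_{\leq 0}$, \Cref{setup:GabrielPopescu} provides a t-epimorphism $q\colon\bigoplus_{l\in L}V_l\twoheadrightarrow\tau_{\leq 0}P_S$ with all $V_l\in\mathcal U$; denote by $v_l\colon V_l\to\bigoplus_{l'}V_{l'}$ the coproduct inclusions. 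For each $l$, taking $\xi = j_S\circ q\circ v_l\colon V_l\to\bigoplus_{m\in S}U_m$ in the identity above and noting $\psi_S\circ\xi = 0$, we get $\varphi_S\circ j_S\circ q\circ v_l = f(V_l)(0) = 0$; as the $v_l$ are coproduct inclusions this forces $\varphi_S\circ j_S\circ q = 0$. Now $q$ is a t-epimorphism between objects of $\cat T_{\leq 0}$, so the long exact cohomology sequence (together with \Cref{remark:tmono_tepi}) gives $H^k_t(\cone(q)) = 0$ for all $k\geq 0$; by non-degeneracy (\Cref{remark:tstruct_nondegerate_aisles}) this means $\cone(q)\in\cat T_{\leq -1}$, whence $\cat T(\cone(q),B) = 0$ since $B\in\cat T_{\geq 0}$, whence $q^{*}\colon\cat T(\tau_{\leq 0}P_S,B)\to\cat T(\bigoplus_l V_l,B)$ is injective, whence $\varphi_S\circ j_S = 0$.

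Finally I assemble the finite vanishings. For $X\in\cat T_{\leq 0}$ and $B\in\cat T_{\geq 0}$ the functorial truncation triangles give a natural isomorphism $\cat T(X,B)\cong\cat T^\heartsuit(H^0_t(X),H^0_t(B))$; applying this to $\tau_{\leq 0}P$ and to each $\tau_{\leq 0}P_S$ and using that, by \Cref{lemma:filteredcolim_functorialcones}, the maps $\alpha_S\colon P_S\to P$ induce $\varinjlim_S H^0_t(P_S)\xrightarrow{\sim}H^0_t(P)$, we get $\cat T(\tau_{\leq 0}P,B)\cong\varprojlim_S\cat T(\tau_{\leq 0}P_S,B)$, the $S$-th projection being precomposition with $\tau_{\leq 0}\alpha_S$. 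The morphism of distinguished triangles in \Cref{lemma:filteredcolim_functorialcones}, combined with the naturality of the canonical morphism $\tau_{\leq 0}(-)\to(-)$, yields $j\circ\tau_{\leq 0}\alpha_S = x_S\circ j_S$, so the $S$-component of $\varphi\circ j$ equals $\varphi\circ x_S\circ j_S = \varphi_S\circ j_S = 0$ for all finite $S$; hence $\varphi\circ j = 0$, as wanted. The main obstacle is exactly this last bookkeeping — identifying $j\circ\tau_{\leq 0}\alpha_S$ with $x_S\circ j_S$ and checking that all the isomorphisms in play are compatible with the directed systems — together with the observation in the third paragraph that, over $\cat T_{\geq 0}$, non-degeneracy is what lets one treat a t-epimorphism as though it were an honest epimorphism.
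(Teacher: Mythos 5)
Your proof is correct and follows essentially the same route as the paper's: reduce to showing that $\varphi$ kills the ``kernel'' morphism $\tau_{\leq 0}\cone(\psi)[-1]\to\bigoplus_m U_m$ via \Cref{prop:epimono_leftaisle}, pass to finite subsets of $M$ using \Cref{lemma:filteredcolim_functorialcones} and exactness of filtered colimits, test against morphisms out of the generators, and conclude by the fact that $f$ is a morphism of $H^0(\smallcat u)$-modules. The only (harmless) reorganization is that you establish the vanishing $\varphi_S\circ j_S=0$ at each finite stage before assembling via the inverse limit, and you are slightly more explicit than the paper in testing against the components of a t-epimorphism $\bigoplus_l V_l\twoheadrightarrow\tau_{\leq 0}P_S$ rather than against arbitrary maps from generators.
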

\begin{proof}
	The proof is a generalization of Mitchell's argument \cite{mitchell-gabrielpopescu}. Let $K$ be the object sitting in the distinguished triangle
	\[
	K \to \bigoplus_{m \in M} U_m \xrightarrow{\psi} A.
	\]
	Then, denoting by $\mu \colon \tau_{\leq 0} K \to \bigoplus_{m \in M} U$ the composition
	\[
	\tau_{\leq 0} K \to K \to \bigoplus_{m \in M} U_m,
	\]
	it is enough to check that $\varphi \mu = 0$ (see Proposition \ref{prop:epimono_leftaisle}). Notice that $\psi \mu =0$.
    Let $\mathcal F$ be the poset of finite subsets of $M$, and for $F \in \mathcal F$ denote by
	\[
	\psi_F \colon \bigoplus_{m \in F} U_m \to A
	\]
	the morphism induced by $\psi$. Let $K_F = \cone(\psi_F)[-1]$. We apply Lemma \ref{lemma:filteredcolim_functorialcones} to the distinguished triangles
	\[
	K_F \to \bigoplus_{m \in F} U_m \xrightarrow{\psi_F} A
	\]
	and taking truncations we also get a directed system
	\[
	(\lambda_{F,F'} \colon \tau_{\leq 0} K_F \to \tau_{\leq 0} K_{F'})_{F,F' \in \mathcal F}
	\]
	and morphisms
	\[
	\lambda_F \colon \tau_{\leq 0} K_F \to \tau_{\leq 0} K
	\]
	such that $\lambda_F = \lambda_{F'} \circ \lambda_{F,F'}$. If we define $\mu_F$ as the composition
	\[
	\tau_{\leq 0} K_F \to K_F \to \bigoplus_{m \in F} U_m,
	\]
	we get following commutative diagram in $\cat T$ (rows are not distinguished triangles):
	\[
	\begin{tikzcd}
		\tau_{\leq 0} K_F \arrow[r, "\mu_F"] \arrow[d, "\lambda_F"] & \bigoplus_{m \in F} U_m \arrow[r, "\psi_F"] \arrow[d, "\sum_{m \in F} u_m p_m"] & A \arrow[d, equal] \\
		\tau_{\leq 0} K \arrow[r, "\mu"]                            & \bigoplus_{m \in M} U_m \arrow[r, "\psi"]                                                    & A,                               
	\end{tikzcd}
	\]
	We wrote the natural morphism $\bigoplus_{m \in F} U_m \to \bigoplus_{m \in M} U_m$ as $\sum_F u_m p_m$, where $p_m$ is the natural projection of place $m$ from $\bigoplus_{m \in F} U_m$ to $U_m$. Thanks to Lemma \ref{lemma:filteredcolim_functorialcones}, the morphisms $\lambda_F \colon \tau_{\leq 0} K_F \to \tau_{\leq 0}K$ yield an isomorphism
	\begin{equation} \label{eq:mitchell_filteredcolim}
	\varinjlim_{F \in \mathcal F} H^0(\lambda_F) \colon \varinjlim_{F \in \mathcal F} H^0_t(\tau_{\leq 0} K_F) \xrightarrow{\sim} H^0_t(\tau_{\leq 0}K),
	\end{equation}
	recalling that we may identify $H^0_t(-)$ with $H^0_t(\tau_{\leq 0}-)$.
	
	Now, $\varphi \mu$ is a morphism $\tau_{\leq 0} K \to B$ and $B \in \cat T_{\geq 0}$, so this morphism (and, in general, morphisms with codomain $B$ and domain in $\cat T_{\leq 0}$) is completely determined by its $H^0$:
	\begin{align*}
		\cat T(\tau_{\leq 0} K, B) & \cong \cat T(\tau_{\leq 0} K, \tau_{\leq 0} B) \\
		& \cong \cat T(\tau_{\geq 0} \tau_{\leq 0} K, \tau_{\leq 0} B) \\
		& \cong \cat T(H^0(\tau_{\leq 0}K), H^0(B)).
	\end{align*}
	We hence want to prove that $H^0(\varphi \mu)=0$. Thanks to the above isomorphism \eqref{eq:mitchell_filteredcolim}, it is enough to prove that $H^0(\varphi \mu \lambda_F)=0$ for all $F$. In turn, since $\mathcal U$ is a set of generators, this is equivalent to proving that $H^0(\varphi \mu \lambda_F \alpha)=0$ for all $\alpha \colon U \to \tau_{\leq 0} K_F$ with $U \in \mathcal U$, which again is equivalent to proving $\varphi \mu \lambda_F \alpha = 0$. Now, we can compute:
	\begin{align*}
		\varphi\mu\lambda_F \alpha &= \varphi \sum_F u_m p_m \mu_F \alpha \\
		&= \sum_F (f(U_m)(m)) p_m \mu_F \alpha \\
		&= \sum_F f(U)(m p_m \mu_F \alpha) \quad \text{($f$ is a morphism in $\Mod(H^0(\smallcat u)$)} \\
		&= \sum_F f(U)(\psi u_m p_m \mu_F \alpha) \\
		&= f(U)(\psi\mu \lambda_F \alpha)= \\
		&= f(U)(0) \\
		&= 0,
	\end{align*}
	and we conclude.
\end{proof}

The following easy corollary will be useful for our purposes.
\begin{corollary} \label{corollary:mitchell_technical}
	In the framework and notation of the above Lemma \ref{lemma:Mitchell_extension}, assume that we can find a further factorization of $\varphi$ through $\psi$:
	\begin{equation} \label{eq:mitchell_technical_extension}
		\begin{tikzcd}
			\bigoplus_{m \in M} U_m \arrow[r] \arrow[d, "\varphi"'] \arrow[rr, "\psi", bend left] & \operatorname{Coim}(\psi) \arrow[r, hook] \arrow[ld] & A \arrow[dll, "\tilde{\varphi}", dotted] \\
			B.                                                                                   &                                                    &  
		\end{tikzcd}
	\end{equation}
	Then, the morphism $\tilde{f} \coloneqq \tilde{\varphi}_* \colon \cat T(j(-),A) \to \cat T(j(-),B)$ makes the following diagram commute:
	\begin{equation} \label{eq:Mitchell_extension_diagram}
		\begin{tikzcd}
			^{A}M \arrow[r, hook] \arrow[d, "f"'] & {\cat T(j(-),A)} \arrow[ld, "\tilde{f}", dotted] \\
			{\cat T(j(-),B).}                     &                                              
		\end{tikzcd}
	\end{equation}
\end{corollary}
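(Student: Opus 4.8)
\emph{Proof proposal.} The plan is to unwind the morphisms of $H^0(\smallcat u)$-modules involved to their values on elements, and then read off the commutativity of \eqref{eq:Mitchell_extension_diagram} directly from the explicit description of $\psi$, $\varphi$ and the factorization of Lemma \ref{lemma:Mitchell_extension}. Write $\iota \colon {}^{A}M \hookrightarrow \cat T(j(-),A)$ for the given monomorphism and $\tilde f = \tilde\varphi_*$. Since $\tilde f \circ \iota$ and $f$ are morphisms in $\Mod(H^0(\smallcat u))$ with the same source $^{A}M$ and target $\cat T(j(-),B)$, and since $^{A}M(U)$ is by definition a subset of $\cat T(j(U),A) = \cat T(U,A)$ for each $U \in \mathcal U$, it is enough to check that the two maps agree on an arbitrary morphism $m \colon U \to A$ lying in $^{A}M(U)$. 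On such an $m$ we have $\iota(m) = m$, hence $\tilde f(\iota(m)) = \tilde\varphi \circ m$, while $f$ sends $m$ to $f(U)(m) \in \cat T(U,B)$; so the claim reduces to the single identity $\tilde\varphi \circ m = f(U)(m)$.

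First I would fix notation for the factorizations. By Proposition \ref{prop:epimono_leftaisle} applied to $\psi$, write $\psi = c \circ p$ with $p \colon \bigoplus_{m \in M} U_m \twoheadrightarrow \operatorname{Coim}(\psi)$ the t-epimorphism and $c \colon \operatorname{Coim}(\psi) \hookrightarrow A$ the t-monomorphism. Lemma \ref{lemma:Mitchell_extension} provides a morphism $q \colon \operatorname{Coim}(\psi) \to B$ with $q \circ p = \varphi$, and the extra hypothesis of the corollary (the commutativity of \eqref{eq:mitchell_technical_extension}) gives $\tilde\varphi \colon A \to B$ with $\tilde\varphi \circ c = q$. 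In particular $\tilde\varphi \circ \psi = \tilde\varphi \circ c \circ p = q \circ p = \varphi$.

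Then I would simply chain the equalities. Recalling from Lemma \ref{lemma:Mitchell_extension} that $m = \psi \circ u_m$ and $\varphi \circ u_m = f(U)(m)$, we compute
\[
\tilde\varphi \circ m = \tilde\varphi \circ \psi \circ u_m = \varphi \circ u_m = f(U)(m).
\]
Since this holds for every $U \in \mathcal U$ and every $m \in {}^{A}M(U)$, we conclude $\tilde f \circ \iota = f$, i.e. \eqref{eq:Mitchell_extension_diagram} commutes.

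I do not anticipate a genuine obstacle: all the substance lives in Lemma \ref{lemma:Mitchell_extension}, and this corollary is pure bookkeeping. The one point that warrants a moment's care is the reduction ``check on elements'': it works precisely because $^{A}M$ is a genuine submodule of $\cat T(j(-),A)$, so evaluating a module morphism on $m \in {}^{A}M(U)$ is literally the same as post-composing with the morphism $m \colon U \to A$, and no naturality bookkeeping beyond ``$f$ is a morphism in $\Mod(H^0(\smallcat u))$'' intervenes.
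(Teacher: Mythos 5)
Your proof is correct and follows essentially the same route as the paper: both reduce to checking the two module morphisms agree on each element $m \in {}^{A}M(U_m) \subseteq \cat T(U_m,A)$ and then run the identical chain $\tilde\varphi \circ m = \tilde\varphi \circ \psi \circ u_m = \varphi \circ u_m = f(U_m)(m)$. Your extra care in deriving $\tilde\varphi \circ \psi = \varphi$ from the two factorizations is a harmless elaboration of what the paper reads off the commutative diagram.
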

\begin{proof}
	From the commutative diagram:
	\[
	\begin{tikzcd}
		\bigoplus_{m \in M} U_m \arrow[r, "\psi"] \arrow[d, "\varphi"'] & A \arrow[ld, "\tilde{\varphi}"] \\
		B                                                            &                                
	\end{tikzcd}
	\]
	we can compute:
	\begin{align*}
		\tilde{\varphi}_*(U_m)(m) &= \tilde{\varphi} \circ m \\
		&= \tilde{\varphi} \circ \psi \circ u_m  \\
		&= \varphi \circ u_m \\
		&= f(U_m)(m),
	\end{align*}
	for all $m \in M$, where we are using that $m \in {^{A}}M(U_m) \subseteq \cat T(U_m,A)$.
\end{proof}
\begin{remark} \label{remark:extension_dginj}
    In the setting of Lemma \ref{lemma:Mitchell_extension}, assume that $B=E$ is a derived injective. Then, Corollary \ref{corollary:mitchell_technical} is applicable. Indeed, by Corollary \ref{coroll:dginj_extension_tmono} we can find an extension of $\operatorname{Coim}(\psi) \to E$ along the t-monomorphism $\operatorname{Coim}(\psi) \hookrightarrow A$.
\end{remark}
\begin{remark} \label{remark:extension_identity_tepi}
	In the setting of Lemma \ref{lemma:Mitchell_extension}, assume that $A \in \cat T_{\leq 0}$, take $^{A}M=\cat T(j(-),A)$, the morphism $\cat T(j(-),A) \to \cat T(j(-),A)$ be the identity and, as above, $M = \coprod_{U \in \mathcal U} \cat T(U,A)$. Then, $\operatorname{Coim}(\psi) \cong A$ and Corollary \ref{corollary:mitchell_technical} is applicable. Indeed, thanks to Corollary \ref{corollary:tepi_coimage} and Remark \ref{remark:tstruct_nondegerate_aisles} it is enough to check that the morphism
	\[
	\psi \colon \bigoplus_{m \in M} U_m \to A
	\]
	is surjective in $H^0$. Let $y \colon H^0(A) \to Z$ be a morphism in the heart $\cat T^\heartsuit$. We want to prove that if $y H^0(\psi)=0$, then $y=0$. Thanks to the compatibility of $H^0$ with direct sums, the assumption $y H^0(\psi)=0$ is equivalent to $y H^0(\psi u_m) =y H^0(m)=0$ for all $m \in M$, where $u_m \colon U_m \to \bigoplus_{m \in M} U_m$ is the natural morphism and by definition $\psi u_m = m \in \cat T(U_m,A)$. Next, let $p \colon \bigoplus_{i \in I} U_i \to A$ be a morphism with $U_i \in \mathcal U$ for all $i \in I$ which is surjective in $H^0$, and let $p_i \colon U_i \to A$ be the component corresponding to $i \in I$. Then, we have $y H^0(p_i)=0$ for all $i \in I$, hence $y H^0(p)=0$. We finally conclude that $y=0$ since $H^0(p)$ is an epimorphism.
\end{remark}	

We may use Lemma \ref{lemma:Mitchell_extension} and Corollary \ref{corollary:mitchell_technical} to obtain the following two corollaries that can be seen as providing a variant of fully faithfulness of the functor $G$. 
\begin{corollary} \label{coroll:fullyfaithfulness_target_dginj}
	Let $A \in \cat T$ and let $E \in \cat T_{\geq 0}$ be a derived injective. Recall that we had $j: \smallcat u =\tau_{\leq 0} \cat U \to \cat A$. Then, the morphism
	\begin{align*}
		\cat T(A,E) & \to \Mod(H^0(\smallcat u))\left(\cat T\left(j(-),A\right), \cat T\left(j(-),E\right)\right), \\
		f & \mapsto f_*
	\end{align*}
	is an isomorphism.
\end{corollary}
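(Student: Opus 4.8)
The plan is to prove separately that $f \mapsto f_*$ is injective and that it is surjective; that $f_*$ is actually a morphism of $H^0(\smallcat u)$-modules is a routine naturality check which I will take for granted. The real content sits in surjectivity, where \Cref{lemma:Mitchell_extension} does the work; injectivity is comparatively soft and only uses that $E$ is a derived injective together with the fact that $H^0_t(\mathcal U)$ generates the heart (\Cref{lemma:heart_Grothendieck}).

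\textbf{Injectivity.} Suppose $f \colon A \to E$ satisfies $f_* = 0$. Since $E$ is a derived injective, $E \cong L(H^0(E))$ by \Cref{remark:dginj_H0inj_uniquely}, so \Cref{prop:dginj_basicproperties}\,\eqref{item:dginj_basicproperties_3} tells us that $H^0 \colon \cat T(A, E) \to \cat T^\heartsuit(H^0(A), H^0(E))$ is an isomorphism; hence it suffices to show $H^0(f) = 0$. Writing $\iota \colon \tau_{\leq 0} A \to A$ for the truncation morphism, $H^0(\iota)$ is an isomorphism, so it is in turn enough to prove $H^0(f \circ \iota) = 0$. Now I would apply \Cref{remark:extension_identity_tepi} to the object $\tau_{\leq 0} A \in \cat T_{\leq 0}$: it produces a morphism $\psi \colon \bigoplus_{m \in M} U_m \to \tau_{\leq 0} A$ with $M = \coprod_{U \in \mathcal U} \cat T(U, \tau_{\leq 0} A)$ and $\psi u_m = m$, which is a t-epimorphism, so that $H^0(\psi)$ is an epimorphism in $\cat T^\heartsuit$. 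For $m \in \cat T(U_m, \tau_{\leq 0} A)$ we compute $f \circ \iota \circ \psi \circ u_m = f \circ (\iota \circ m) = f_*(U_m)(\iota \circ m) = 0$, because $\iota \circ m \in \cat T(U_m, A)$ and $f_* = 0$. Since $H^0_t$ preserves direct sums, every component of $H^0(f \circ \iota \circ \psi)$ vanishes, so $H^0(f \circ \iota \circ \psi) = 0$; as $H^0(\psi)$ is epi we conclude $H^0(f \circ \iota) = 0$, hence $f = 0$.

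\textbf{Surjectivity.} Let $g \colon \cat T(j(-), A) \to \cat T(j(-), E)$ be a morphism in $\Mod(H^0(\smallcat u))$. I would invoke \Cref{lemma:Mitchell_extension} with $B = E$ (which lies in $\cat T_{\geq 0}$, being derived injective), with $^{A}M = \cat T(j(-), A)$ taken together with its identity self-inclusion, and with $f = g$. This yields the object $\bigoplus_{m \in M} U_m$ (where $M = \coprod_{U \in \mathcal U} \cat T(U, A)$), the morphisms $\psi$ and $\varphi$, and a factorization of $\varphi$ through $\operatorname{Coim}(\psi)$. Since $E$ is a derived injective, \Cref{remark:extension_dginj} — which uses \Cref{coroll:dginj_extension_tmono} to extend $\operatorname{Coim}(\psi) \to E$ along the t-monomorphism $\operatorname{Coim}(\psi) \hookrightarrow A$ — supplies exactly the extra factorization of $\varphi$ through $\psi$ needed to apply \Cref{corollary:mitchell_technical}, i.e. a morphism $\tilde{\varphi} \colon A \to E$ with $\varphi = \tilde\varphi \circ \psi$. \Cref{corollary:mitchell_technical} then asserts that $\tilde\varphi_*$ agrees with $g$ after restriction along $^{A}M \hookrightarrow \cat T(j(-), A)$; but here that inclusion is the identity, so $\tilde\varphi_* = g$, exhibiting $g$ as a pushforward.

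\textbf{Where the difficulty lies.} Essentially all the work has been front-loaded into \Cref{lemma:Mitchell_extension} (and, underneath it, \Cref{lemma:filteredcolim_functorialcones}) together with \Cref{coroll:dginj_extension_tmono}; granting those, this corollary is bookkeeping. The two points needing a little care are, on the surjectivity side, choosing $^{A}M$ to be the whole module with the identity inclusion so that \Cref{corollary:mitchell_technical} delivers $\tilde\varphi_* = g$ on the nose rather than merely on a submodule; and, on the injectivity side, passing to $\tau_{\leq 0} A$ first, so that \Cref{remark:extension_identity_tepi} becomes applicable and the whole argument can be run at the level of $H^0$, where derived injectivity of $E$ turns morphisms into their zeroth cohomology.
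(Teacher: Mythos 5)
Your proposal is correct and follows essentially the same route as the paper: surjectivity via \Cref{lemma:Mitchell_extension} with $^{A}M=\cat T(j(-),A)$, \Cref{remark:extension_dginj} and \Cref{corollary:mitchell_technical}, and injectivity by reducing to $H^0$ via derived injectivity of $E$ and testing against an $H^0$-epimorphism from a direct sum of generators onto $\tau_{\leq 0}A$. The only cosmetic difference is that you obtain that epimorphism from \Cref{remark:extension_identity_tepi} rather than directly from the generation assumption of \Cref{setup:GabrielPopescu}, which changes nothing of substance.
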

\begin{proof}
	First, we check injectivity. Let $f \colon A \to E$ such that $f_*=0$. Since $U \in \cat T_{\leq 0}$ for all $U \in \cat U$, we have $\cat T(j(-),B) \cong \cat T(j(-), \tau_{\leq 0} B)$ for any $B \in \cat T$, and $f_*=0$ is the same as $(\tau_{\leq 0} f)_*=0$, with
	\[
	(\tau_{\leq 0} f)_* \colon \cat T(j(-),\tau_{\leq 0} A) \to \cat T(j(-), \tau_{\leq 0} E).
	\]
	On the other hand, since $E$ is a derived injective, $f=0$ is equivalent to
	\[
	H^0(f) =0 \colon H^0(A) \to H^0(E)
	\]
	in the heart. Now, take a morphism $p \colon \bigoplus_{i \in I} U_i \to \tau_{\leq 0}A$ with $U_i \in \mathcal U$ inducing an epimorphism in $H^0$, and denote by $j_i \colon U_i \to \bigoplus_{i \in I} U_i$ the natural morphism, for $i \in I$. By hypothesis, we have that $(\tau_{\leq 0}f)_*(p j_i)=0$ for all $i \in I$, which implies $(\tau_{\leq 0}f)_*(p)=0$; in particular $H^0(f)H^0(p)=0$. Since $H^0(p)$ is an epimorphism, we conclude that $H^0(f)=0$, as we wanted.
	
	Let us now show surjectivity. We apply Lemma \ref{lemma:Mitchell_extension} taking the identity morphism $^{A}M=\cat T(j(-),A)$ and we recall Remark \ref{remark:extension_dginj}: by Corollary \ref{corollary:mitchell_technical} we actually obtain that, given any $g \colon \cat T(j(-),A) \to \cat T(j(-),E)$, there exists $\varphi \colon A \to E$ such that $\varphi_* = g$, which is precisely what we need.
\end{proof}
\begin{corollary} \label{coroll:fullyfaithfulness_target_rightaisle}
	Let $A \in \cat T_{\leq 0}$ and let $B \in \cat T_{\geq 0}$. Recall that we had $j: \smallcat u =\tau_{\leq 0} \cat U \to \cat A$. Then, the morphism
	\begin{align*}
		\cat T(A,B) & \to \Mod(H^0(\smallcat u))\left(\cat T\left(j(-),A\right), \cat T\left(j(-),B\right)\right), \\
		f & \mapsto f_*
	\end{align*}
	is an isomorphism.
\end{corollary}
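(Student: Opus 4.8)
The plan is to mirror the proof of \Cref{coroll:fullyfaithfulness_target_dginj} essentially word for word, with \Cref{remark:extension_identity_tepi} taking over the role that \Cref{remark:extension_dginj} played there. Indeed, the only features of the target used in that argument were that $\operatorname{Coim}(\psi)$ maps to it, which \Cref{lemma:Mitchell_extension} grants whenever $B \in \cat T_{\geq 0}$, and that the coimage decomposition is an isomorphism onto the source, which here follows from $A \in \cat T_{\leq 0}$ instead of from derived injectivity. As in \Cref{coroll:fullyfaithfulness_target_dginj} we write $f \mapsto f_*$ for the map in the statement and prove separately that it is injective and surjective.

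For injectivity, let $f \colon A \to B$ with $f_* = 0$. The single new input is the elementary fact that, for $A \in \cat T_{\leq 0}$ and $B \in \cat T_{\geq 0}$, the assignment $f \mapsto H^0(f)$ is an isomorphism $\cat T(A,B) \xrightarrow{\sim} \cat T^\heartsuit(H^0(A),H^0(B))$. I would prove this by the two standard truncation triangles: apply $\cat T(-,B)$ to $\tau_{\leq -1}A \to A \to H^0(A)$ and use $\cat T_{\leq -1} \perp \cat T_{\geq 0}$ (and its shift) to get $\cat T(A,B) \cong \cat T(H^0(A),B)$; then apply $\cat T(H^0(A),-)$ to $H^0(B) \to B \to \tau_{\geq 1}B$ and use $\cat T_{\leq 0} \perp \cat T_{\geq 1}$ (and its shift) to get $\cat T(H^0(A),B) \cong \cat T^\heartsuit(H^0(A),H^0(B))$, checking that the composite inverts $f \mapsto H^0(f)$. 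Granting this, it suffices to show $H^0(f) = 0$. Pick (Setup \ref{setup:GabrielPopescu}) a t-epimorphism $p \colon \bigoplus_{i \in I} j(U_i) \twoheadrightarrow A$ with all $U_i \in \mathcal U$; its components $p_i \colon j(U_i) \to A$ lie in $\cat T(j(U_i),A)$, so $f \circ p_i = f_*(U_i)(p_i) = 0$ for every $i$, hence $f \circ p = 0$ since a morphism out of the coproduct is determined by its components. Therefore $H^0(f) \circ H^0(p) = 0$, and since $p$ is a t-epimorphism, $H^0(p)$ is an epimorphism (\Cref{remark:tmono_tepi}, \Cref{remark:tstruct_nondegerate_aisles}), whence $H^0(f) = 0$ and so $f = 0$.

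For surjectivity, given $g \colon \cat T(j(-),A) \to \cat T(j(-),B)$, I would invoke \Cref{lemma:Mitchell_extension} with ${}^{A}M = \cat T(j(-),A)$ (the identity submodule) and the morphism $g$; its standing hypothesis $B \in \cat T_{\geq 0}$ is precisely the one we are given. This produces $\psi \colon \bigoplus_{m \in M} U_m \to A$ and a factorization of the associated morphism $\varphi$ through $\operatorname{Coim}(\psi)$. Since $A \in \cat T_{\leq 0}$, \Cref{remark:extension_identity_tepi} applies and yields $\operatorname{Coim}(\psi) \cong A$, so $\varphi = \tilde{\varphi} \circ \psi$ for a suitable $\tilde{\varphi} \colon A \to B$; then \Cref{corollary:mitchell_technical} gives that $\tilde{\varphi}_*$ makes diagram \eqref{eq:Mitchell_extension_diagram} commute, and since there the left vertical inclusion is the identity this reads $\tilde{\varphi}_* = g$, exhibiting the required preimage.

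I do not expect a deep obstacle: the substantive work was already carried out in \Cref{lemma:Mitchell_extension}, \Cref{remark:extension_identity_tepi} and \Cref{corollary:mitchell_technical}, and what remains is to check that the two separate hypotheses they need ($B \in \cat T_{\geq 0}$ for \Cref{lemma:Mitchell_extension}, $A \in \cat T_{\leq 0}$ for \Cref{remark:extension_identity_tepi}) are both available, which they are. The only place that genuinely requires a fresh — if entirely routine — argument is the truncation identification $\cat T(A,B) \cong \cat T^\heartsuit(H^0(A),H^0(B))$ for $A \in \cat T_{\leq 0}$ and $B \in \cat T_{\geq 0}$, used to reduce injectivity to a computation in the heart.
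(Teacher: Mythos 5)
Your proposal is correct and follows the paper's own proof essentially step for step: injectivity via the truncation isomorphism $\cat T(A,B)\cong\cat T^\heartsuit(H^0(A),H^0(B))$ together with a t-epimorphism from a direct sum of generators, and surjectivity via \Cref{lemma:Mitchell_extension} applied to ${}^{A}M=\cat T(j(-),A)$, using \Cref{remark:extension_identity_tepi} to identify $\operatorname{Coim}(\psi)\cong A$ and then \Cref{corollary:mitchell_technical}. The only difference is that you spell out the truncation isomorphism in more detail than the paper does, which is harmless.
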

\begin{proof}
	First, we check injectivity. Let $f \colon A \to B$ such that $f_*=0$. Since $A \in \cat T_{\leq 0}$ and $B \in \cat T_{\geq 0}$, we have $\cat T(A,B) \cong \cat T(H^0(A), H^0(B))$, and $f=0$ is equivalent to
	\[
	H^0(f) =0 \colon H^0(A) \to H^0(B)
	\]
	in the heart. Now, take a morphism $p \colon \bigoplus_{i \in I} U_i \to A$ with $U_i \in \mathcal U$ inducing an epimorphism in $H^0$, and denote by $j_i \colon U_i \to \bigoplus_{i \in I} U_i$ the natural morphism, for $i \in I$. By hypothesis, we have that $f_*(p j_i)=0$ for all $i \in I$, which implies that $f_*(p)=0$;  in particular $H^0(f)H^0(p)=0$. Since $H^0(p)$ is an epimorphism, we conclude that $H^0(f)=0$, as we wanted.
	
	Let us now show surjectivity. We apply Lemma \ref{lemma:Mitchell_extension} taking the identity morphism $^{A}M=\cat T(j(-),A)$ and we recall Remark \ref{remark:extension_identity_tepi}: by Corollary \ref{corollary:mitchell_technical} we actually obtain that, given any $g \colon \cat T(j(-),A) \to \cat T(j(-),B)$, there exists $\varphi \colon A \to B$ such that $\varphi_* = g$, which is precisely what we need.
\end{proof}
\begin{corollary} \label{coroll:fullyfaithfulness_heart}
The functor $G^\heartsuit: \cat T^\heartsuit \to \Mod(H^0(\smallcat u))$ is fully faithful.
\end{corollary}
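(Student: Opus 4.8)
The plan is to reduce the claim to \Cref{coroll:fullyfaithfulness_target_rightaisle}. Recall that by construction $\cat T^\heartsuit = \cat T_{\leq 0} \cap \cat T_{\geq 0}$, so for any two objects $A, A'$ of the heart we have $A \in \cat T_{\leq 0}$, $A' \in \cat T_{\geq 0}$, and $\cat T^\heartsuit(A,A') = \cat T(A,A')$ since the heart is a full subcategory of $\cat T$.

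First I would identify $G^\heartsuit$ restricted to the heart explicitly. By \Cref{prop:induced_functor_heart}, $G^\heartsuit(A) = H^0_t(G(A))$ for $A \in \cat T^\heartsuit$, where $H^0_t \colon \dercomp(\smallcat u) \to \Mod(H^0(\smallcat u))$ is the cohomological functor of the canonical t-structure; this simplifies further to $\tau_{\leq 0} G(A)$ because $G$ is left t-exact by \Cref{prop:right-texact}. Since $G(A) = \cat A(j(-),A)$ as an object of $\dercompdg(\smallcat u)$, and $H^0_t$ sends a dg-module $M$ to the $H^0(\smallcat u)$-module $U \mapsto H^0(M(U))$, I obtain a natural isomorphism
\[
G^\heartsuit(A) \;\cong\; \bigl(U \mapsto H^0(\cat A(j(U),A))\bigr) \;=\; \cat T(j(-),A),
\]
which is precisely \eqref{eq:GabrielPopescu_rightadj_heart}; and on a morphism $f \colon A \to A'$ of the heart, $G^\heartsuit(f)$ is carried, under this identification, to post-composition $f_* \colon \cat T(j(-),A) \to \cat T(j(-),A')$, because the Yoneda embedding followed by $\Res_j$ sends a closed degree $0$ morphism $f$ to post-composition with $f$.

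Having done this, the map $\cat T^\heartsuit(A,A') \to \Mod(H^0(\smallcat u))\bigl(G^\heartsuit(A), G^\heartsuit(A')\bigr)$ induced by $G^\heartsuit$ becomes exactly the map
\[
\cat T(A,A') \longrightarrow \Mod(H^0(\smallcat u))\bigl(\cat T(j(-),A), \cat T(j(-),A')\bigr), \qquad f \mapsto f_*,
\]
of \Cref{coroll:fullyfaithfulness_target_rightaisle}, which is applicable since $A \in \cat T_{\leq 0}$ and $A' \in \cat T_{\geq 0}$, and is therefore an isomorphism. Hence $G^\heartsuit$ is fully faithful.

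There is no serious obstacle here: the substantive content --- which ultimately rests on the derived Mitchell lemma \Cref{lemma:Mitchell_extension} --- is already packaged in \Cref{coroll:fullyfaithfulness_target_rightaisle}. The only point requiring (minor) care is the bookkeeping that identifies $G^\heartsuit$ on the heart with the hom-functor $\cat T(j(-),-)$; concretely, one checks that the dg-module $\cat A(j(-),A)$ has vanishing negative cohomology when $A$ lies in the heart, which is immediate from $j(\smallcat u) \subseteq \cat T_{\leq 0}$ together with $A \in \cat T_{\geq 0}$, so that indeed $G^\heartsuit(A) = H^0_t(G(A)) = \cat T(j(-),A)$.
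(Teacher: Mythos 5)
Your proof is correct and follows exactly the paper's argument: apply \Cref{coroll:fullyfaithfulness_target_rightaisle} to objects $A, A'$ of the heart (which lie in $\cat T_{\leq 0}$ and $\cat T_{\geq 0}$ respectively) after identifying $G^\heartsuit$ with $\cat T(j(-),-)$. The extra bookkeeping you supply for that identification is fine but not needed beyond what the paper already records in \eqref{eq:GabrielPopescu_rightadj_heart}.
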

\begin{proof}
 Apply \cref{coroll:fullyfaithfulness_target_rightaisle} to objects $A$ and $B$ in the heart $\cat T^\heartsuit$. One concludes immediately by recalling that $G^\heartsuit$ is precisely given by
	\[
	G^\heartsuit(X)= H^0(G(X)) = \cat T(j(-),X). \qedhere
	\]
\end{proof}
\subsubsection{$G$ preserves derived injectives}
In this subsection we make use of \S\ref{subsect:Mitchell} in order to show that $G$ preserves derived injectives. 
	\begin{proposition} \label{prop:H0G(dginj)_injective}
		Let $E \in \cat T$ be a derived injective. Then,
		\[
		H^0(G(E)) \cong \cat T(j(-),E)
		\]
		is an injective object in $\Mod(H^0(\smallcat u))$.
	\end{proposition}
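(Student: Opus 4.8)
The plan is to verify that the right $H^0(\smallcat u)$-module $\cat T(j(-),E)$ is injective by checking a Baer-type criterion, and then to feed the resulting extension problem into the derived Mitchell lemma. First I would record the identification of representables: since $\smallcat u = \tau^{\leq 0}\mathcal U$ and $j$ is the identity on objects, for $U \in \mathcal U$ we have $H^0(\smallcat u)(-,U) = H^0(\tau_{\leq 0}\cat A(-,U)) = H^0(\cat A(-,U)) = \cat T(j(-),U)$, so the representable $H^0(\smallcat u)$-module attached to $U$ is exactly $H^0(G(U))$. In particular a submodule of a representable is a submodule ${}^{U}M \hookrightarrow \cat T(j(-),U)$, which is precisely the input required by \Cref{lemma:Mitchell_extension} (taking there $A = U$).

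Next I would invoke (or include a short proof of) Baer's criterion for modules over the small preadditive category $H^0(\smallcat u)$: a right module $J$ is injective if and only if, for every object $U$ of $\smallcat u$ and every submodule $N \hookrightarrow H^0(\smallcat u)(-,U)$, every morphism $N \to J$ extends to a morphism $H^0(\smallcat u)(-,U) \to J$. The argument is the classical one: by Zorn's lemma choose a maximal extension $h_0 \colon N_0 \to J$ with $N \subseteq N_0 \subseteq N'$; if $N_0 \subsetneq N'$, pick $x \in N'(c) \setminus N_0(c)$, form the submodule $N_x \subseteq H^0(\smallcat u)(-,c)$ consisting of arrows $\alpha$ with $x\cdot\alpha \in N_0$, extend the induced map $N_x \to J$ along $N_x \hookrightarrow H^0(\smallcat u)(-,c)$, and use the (additive) Yoneda lemma to enlarge $h_0$ to $N_0 + \langle x\rangle$, contradicting maximality. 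This works verbatim because the representables form a generating family and the submodules $N_x$ are again submodules of representables.

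With this in hand, fix $U \in \mathcal U$, a submodule $N \hookrightarrow \cat T(j(-),U)$, and a morphism $g \colon N \to \cat T(j(-),E)$ in $\Mod(H^0(\smallcat u))$. Since $E$ is derived injective, $E \in \cat T_{\geq 0}$ by \Cref{prop:dginj_basicproperties}, so \Cref{lemma:Mitchell_extension} applies with $A = U$, $B = E$, ${}^{U}M = N$, $f = g$, and shows that the associated morphism $\varphi \colon \bigoplus_m U_m \to E$ factors through $\operatorname{Coim}(\psi)$. Moreover $\bigoplus_m U_m \in \cat T_{\leq 0}$ (the $U_m$ lie in $\mathcal U \subseteq \cat A_{\leq 0}$ and $\cat T_{\leq 0}$ is closed under coproducts), so $\operatorname{Coim}(\psi) \in \cat T_{\leq 0}$ and the map $\operatorname{Coim}(\psi) \hookrightarrow U$ produced by \Cref{prop:epimono_leftaisle} is a t-monomorphism; as $E$ is derived injective, \Cref{coroll:dginj_extension_tmono} (cf. \Cref{remark:extension_dginj}) provides the further factorization of $\varphi$ through $\psi$. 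Then \Cref{corollary:mitchell_technical} yields $\tilde f \colon \cat T(j(-),U) \to \cat T(j(-),E)$ whose restriction to $N$ is $g$. This verifies the Baer criterion, hence $H^0(G(E)) \cong \cat T(j(-),E)$ is injective in $\Mod(H^0(\smallcat u))$.

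The main obstacle is precisely the reduction step: \Cref{lemma:Mitchell_extension} only directly delivers extensions of maps defined on submodules of representable modules, so one needs the Baer criterion to upgrade this to genuine injectivity. The point is that, in the module category over the small preadditive category $H^0(\smallcat u)$, Baer's argument goes through unchanged (representables generate, and the relevant "annihilator" submodules are again submodules of representables), so no additional input beyond \S\ref{subsect:Mitchell} and the existence/vanishing properties of derived injectives is required.
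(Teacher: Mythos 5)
Your proposal is correct and follows essentially the same route as the paper: reduce injectivity of $\cat T(j(-),E)$ via a Baer-type criterion for $\Mod(H^0(\smallcat u))$ to extending maps from submodules of representables, then apply \Cref{lemma:Mitchell_extension} and \Cref{corollary:mitchell_technical} with $A=U$, $B=E$, using \Cref{remark:extension_dginj} for the further factorization. The only cosmetic difference is that the paper cites the generalized Baer criterion for Grothendieck categories (\cite[Prop 3.2]{crivei-mitchell-generalized}) where you sketch the classical Zorn's lemma argument directly.
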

	\begin{proof}
		Applying the generalization of Baer's criterion to Grothendieck categories \cite[Prop 3.2]{crivei-mitchell-generalized}, it is enough to prove that for all $U \in \smallcat u$ and any subobject $M \subseteq H^0(\smallcat u) (-,U) = \cat T(j(-),U)$, any morphism $f \colon M \to \cat T(j(-),E)$ in $\Mod(H^0(\smallcat u))$ can be extended:
		\[
		\begin{tikzcd}
			M \arrow[r, hook] \arrow[d, "f"'] & {\cat T(j(-),U)} \arrow[ld, "\tilde{f}", dotted] \\
			{\cat T(j(-),E).}                     &                                              
		\end{tikzcd}
		\]
		This follows directly from the above Corollary \ref{corollary:mitchell_technical}, choosing $A=U$ and $B=E$: indeed, since $E$ is derived injective, the extension $\operatorname{Coim}(\psi) \to E$ can be further extended along $\operatorname{Coim}(\psi) \hookrightarrow U$ (Remark \ref{remark:extension_dginj}).
	\end{proof}
	
	\begin{proposition}\label{prop:preserve-dginj}
		The functor $G \colon \cat T \to \dercomp(\smallcat u)$ preserves derived injectives.
	\end{proposition}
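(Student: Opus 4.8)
The plan is to identify $G(E)$ with the derived injective attached to $I := H^0(G(E)) = \cat T(j(-),E)$, which is an injective object of $\Mod(H^0(\smallcat u))$ by \Cref{prop:H0G(dginj)_injective}. Since $\dercompdg(\smallcat u)$ has enough derived injectives, there is a derived injective $L(I) \in \dercomp(\smallcat u)$ characterised by $\dercomp(\smallcat u)(-,L(I)) \cong \Mod(H^0(\smallcat u))\bigl(H^0_t(-),I\bigr)$. First I would produce the canonical morphism $\eta \colon G(E) \to L(I)$ corresponding to $\mathrm{id}_I$ under the identification $\dercomp(\smallcat u)(G(E),L(I)) \cong \Mod(H^0(\smallcat u))(I,I)$; by \Cref{prop:dginj_basicproperties}\eqref{item:dginj_basicproperties_3} it satisfies $H^0_t(\eta) = \mathrm{id}_I$ after the identification $H^0_t(L(I)) \cong I$. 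It then suffices to prove that $\eta$ is an isomorphism, for then $G(E) \cong L(I)$ is a derived injective.

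To that end I would invoke non-degeneracy of the canonical t-structure on $\dercompdg(\smallcat u)$ (cf.\ \Cref{example:derivedcategory}): $\eta$ is an isomorphism as soon as $H^n_t(\eta)$ is an isomorphism in $\Mod(H^0(\smallcat u))$ for every $n \in \mathbb Z$, and a morphism of $H^0(\smallcat u)$-modules is an isomorphism precisely when it is so at every object $U \in \mathcal U = \Ob H^0(\smallcat u)$. Because each representable $\smallcat u(-,U)$ is h-projective, the derived Yoneda lemma gives $H^n_t(N)(U) \cong \dercomp(\smallcat u)(\smallcat u(-,U)[-n],N)$ naturally in $N$, so $H^n_t(\eta)(U)$ is simply post-composition with $\eta$. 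The problem thus reduces to showing that
\[
\eta_* \colon \dercomp(\smallcat u)\bigl(\smallcat u(-,U)[m],G(E)\bigr) \longrightarrow \dercomp(\smallcat u)\bigl(\smallcat u(-,U)[m],L(I)\bigr)
\]
is an isomorphism for all $U \in \mathcal U$ and $m \in \mathbb Z$.

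Next I would compute both sides. By derived Yoneda the source is $H^{-m}(\cat A(U,E)) \cong \cat T(U,E[-m])$, using that $G(E)$ is the dg-module $W \mapsto \cat A(W,E)$; by the defining property of $L(I)$ the target is $\Mod(H^0(\smallcat u))\bigl(H^m_t(\smallcat u(-,U)),I\bigr)$. Since $\smallcat u = \tau^{\leq 0}\mathcal U$ is concentrated in non-positive degrees, $H^m_t(\smallcat u(-,U)) = 0$ for $m > 0$ and $H^m_t(\smallcat u(-,U)) \cong \cat T(j(-),U[m])$ for $m \leq 0$. For $m > 0$ both groups vanish — the source because $E \in \cat T_{\geq 0}$ (derived injectives lie in the right aisle, \Cref{prop:dginj_basicproperties}\eqref{item:dginj_basicproperties_1}), so $E[-m] \in \cat T_{\geq 1}$ while $U \in \cat T_{\leq 0}$. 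For $m \leq 0$, \Cref{coroll:fullyfaithfulness_target_dginj} applied to the object $A = U[m] \in \cat T$ produces an isomorphism $\cat T(U[m],E) \xrightarrow{\sim} \Mod(H^0(\smallcat u))\bigl(\cat T(j(-),U[m]),I\bigr)$, $f \mapsto f_*$, and I would match $\eta_*$ with this isomorphism under the identifications above. (Consistently, one also has $G(E) \in \dercomp(\smallcat u)_{\geq 0}$ by left t-exactness of $G$, \Cref{prop:right-texact}, though this is not needed.)

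The hard part is this final compatibility check: verifying that $\eta_*$ really is the isomorphism of \Cref{coroll:fullyfaithfulness_target_dginj}, not merely some abstract isomorphism between the same two groups. This comes down to unwinding the naturality of the (derived) Yoneda lemma against the construction of $\eta$ out of $\mathrm{id}_I$ via \Cref{prop:dginj_basicproperties}\eqref{item:dginj_basicproperties_3} — pure bookkeeping, but where all the care is required. Granting it, $\eta_*$ is an isomorphism for every $U$ and $m$, hence $\eta$ is an isomorphism and $G(E) \cong L(I)$ is a derived injective, which is the assertion.
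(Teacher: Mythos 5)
Your proposal is correct and follows essentially the same route as the paper: both construct the comparison morphism $G(E) \to L(H^0(G(E)))$ lifting the identity of the injective object $H^0(G(E))$ supplied by \Cref{prop:H0G(dginj)_injective}, and both reduce, via the derived Yoneda lemma, to showing that post-composition on maps out of shifted representables $\smallcat u(-,U)[-n]$ is the isomorphism of \Cref{coroll:fullyfaithfulness_target_dginj}. The only cosmetic difference is that the paper disposes of the nonpositive cohomological degrees at once by noting that both objects lie in $\dercomp(\smallcat u)_{\geq 0}$ and that $H^0$ of the comparison map is the identity, whereas you handle those degrees by a direct vanishing argument.
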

	\begin{proof}
		Let $E \in \cat T_{\geq 0}$ be a derived injective. We know from right t-exactness that $G(E) \in \dercomp(\smallcat u)_{\geq 0}$ and from Corollary \ref{prop:H0G(dginj)_injective} that $H^0(G(E))$ is an injective object. From \Cref{prop:dginj_brownrepresentability} we have that the derived category $\dercomp(\smallcat u)$ has derived injectives, so there is a derived injective object $K=L(H^0(G(E)))$. From the universal property, we also get a morphism
		\[
		\alpha \colon G(E) \to K,
		\]
		such that $H^0(\alpha)=1_{H^0(G(E))}$. We are going to show that $\alpha$ is an isomorphism. To do this, it is enough to show that
		\[
		\dercomp(\smallcat u)(\smallcat u(-,U)[-n], G(E)) \xrightarrow{\alpha_*} \dercomp(\smallcat u)(\smallcat u(-,U)[-n], K),
		\]
		is an isomorphism for all $U \in \smallcat U$ and all $n>0$. We have a commutative diagram:
		\[
		\begin{tikzcd}
			{\dercomp(\smallcat u)(\smallcat u(-,U)[-n], G(E))} \arrow[d, "\alpha_*"] \arrow[r, "H^0"] & {\Mod(H^0(\smallcat u))(H^{-n}(\smallcat u(-,U)), H^0(G(E)))} \\
			{\dercomp(\smallcat u)(\smallcat u(-,U)[-n], K)} \arrow[ru, "\sim"']                        &                                     
		\end{tikzcd}
		\]
		The above diagonal morphism 
		\[
		\dercomp(\smallcat u)(\smallcat u(-,U)[-n], K) \to \Mod(H^0(\smallcat u))(H^{-n}(\smallcat u(-,U)), H^0(G(E)))
		\]
		is an isomorphism, since $K=L(H^0(G(E)))$ is the derived injective associated to $H^0(G(E))$. Let us consider the horizontal morphism
		\[
		H^0 \colon \dercomp(\smallcat u)(\smallcat u(-,U)[-n], G(E)) \to \Mod(H^0(\smallcat u))(H^{-n}(\smallcat u(-,U)), H^0(G(E))).
		\]
		If we show that it is an isomorphism as well, we conclude that $\alpha_*$ is an isomorphism, as we wish. Observe that the morphism can be identified with
		\[
		H^n(G(E))(U) \cong \cat T(U[-n],E) \to \Mod(H^0(\smallcat u))\left(\cat T (j(-),U[-n]),\cat T\left(j(-),E\right)\right),
		\]
		since $H^{-n}(\smallcat u(-,U)) \cong H^{-n} \tau_{\leq 0} \cat A(j(-),U) \cong \cat T(j(-),U[-n])$. Then, we can conclude that it is an isomorphism by Corollary \ref{coroll:fullyfaithfulness_target_dginj}.
	\end{proof}

\begin{corollary} \label{coroll:leftadj_texact}
	The functor $F \colon \dercomp(\smallcat u) \to \cat T$ is t-exact.
\end{corollary}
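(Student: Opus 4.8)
The plan is to obtain this as an immediate application of Proposition~\ref{prop:adjoint-dginj}, which was set up precisely for this situation. I would apply that proposition to the adjunction of triangulated functors
\[
F \dashv G \colon \dercomp(\smallcat u) \rightleftarrows \cat T,
\]
taking the ``$\cat S$'' of the proposition to be $\dercomp(\smallcat u)$ and the ``$\cat T$'' to be $\cat T = H^0(\cat A)$. All that has to be verified are the standing hypotheses of Proposition~\ref{prop:adjoint-dginj}: that the t-structure on $\cat T$ is non-degenerate, that $\cat T$ has enough derived injectives, and that $G$ is left t-exact. The first holds by Setup~\ref{setup:GabrielPopescu}; the second is precisely the last statement of Lemma~\ref{lemma:heart_Grothendieck} (the heart $\cat T^{\heartsuit}$ is Grothendieck abelian, hence has enough injectives, and $\cat T$ has derived injectives by Proposition~\ref{prop:dginj_brownrepresentability}); the third is Proposition~\ref{prop:right-texact}.

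With these hypotheses in place, Proposition~\ref{prop:adjoint-dginj} gives the equivalence ``$F$ is t-exact $\iff$ $G$ preserves derived injectives''. Since $G$ preserves derived injectives by Proposition~\ref{prop:preserve-dginj}, we conclude that $F$ is t-exact. (Alternatively, one only needs the left t-exactness of $F$, as its right t-exactness is already recorded in Proposition~\ref{prop:right-texact}; this is the direction ``$G$ preserves derived injectives $\Rightarrow$ $F$ is left t-exact'' inside the proof of Proposition~\ref{prop:adjoint-dginj}, which uses that $\cat T$ has enough derived injectives to detect vanishing of $H^{-n}(F(C))$ for $C \in \dercomp(\smallcat u)_{\geq 0}$, $n \geq 1$, by mapping into an injective envelope and using $\cat T(F(C)[-n], L(I)) \cong \dercomp(\smallcat u)(C[-n], G(L(I))) = 0$.)

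I do not expect any genuine obstacle here: the substantive work — the derived Mitchell argument of \S\ref{subsect:Mitchell} and the resulting fact that $G$ preserves derived injectives (Proposition~\ref{prop:preserve-dginj}) — has already been carried out, and this corollary is just the formal payoff. The only thing to be mildly careful about is bookkeeping the roles of the two categories in Proposition~\ref{prop:adjoint-dginj} (the category that must carry enough derived injectives and a non-degenerate t-structure is the \emph{target} $\cat T = H^0(\cat A)$ of $F$, not $\dercomp(\smallcat u)$), and making sure the adjunction is the one of Notation~\ref{not:H0(F)}.
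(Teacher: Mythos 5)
Your proof is correct and follows exactly the paper's argument: right t-exactness from Proposition~\ref{prop:right-texact}, and left t-exactness by applying Proposition~\ref{prop:adjoint-dginj} (whose hypotheses you correctly verify via Setup~\ref{setup:GabrielPopescu} and Lemma~\ref{lemma:heart_Grothendieck}) together with Proposition~\ref{prop:preserve-dginj}. The care you take in assigning the roles of the two categories in Proposition~\ref{prop:adjoint-dginj} is exactly the right bookkeeping.
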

\begin{proof}
	In \Cref{prop:right-texact} we already proved that $F$ is right t-exact. The left t-exactness follows from \Cref{prop:preserve-dginj} and \Cref{prop:adjoint-dginj}.
\end{proof}

\subsubsection{$G$ is quasi-fully faithful}
\begin{remark}\label{rem:quasifully-faithful}
	Let us distinguish again, contradicting momentarily \Cref{not:H0(F)}, between the dg functor $G: \cat A \to \dercompdg(\smallcat u)$ and the induced functor $H^0(G): \cat T \to \dercomp(\smallcat u)$. Observe that as both $\cat A$ and $\dercompdg(\smallcat u)$ are pretriangulated dg categories, in order to prove the quasi-fully faithfulness of $G$ it is enough to prove that $H^0(G)$ is fully faithful. From this moment on we abuse notations and set, once more, $G = H^0 (G)$, as we did in \Cref{not:H0(F)}. 
\end{remark}

We recall that, by Proposition \ref{prop:induced_adjunction_hearts}, the adjunction $F \dashv G$ induces an adjunction between the hearts:
		\begin{equation}
			F^\heartsuit \dashv G^\heartsuit \colon  \Mod(H^0(\smallcat u)) \rightleftarrows \cat T^\heartsuit.
		\end{equation}
		Observe that, if $X \in \cat \cat T^\heartsuit$, then by definition
		\begin{align*}
			G^\heartsuit(X) &= H^0(\cat A(j(-),X))\\
			&= \cat T(j(-),X)) \\
			& \cong \cat T^\heartsuit(H^0(j(-)), X).
		\end{align*}
	
	\begin{proposition}\label{prop:rightadj-fully-faithful}
		The functor $G$ is fully faithful.
	\end{proposition}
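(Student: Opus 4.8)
The plan is to deduce full faithfulness of $G$ from \Cref{rem:quasifully-faithful}: it suffices to show that the triangulated functor $H^0(G)\colon \cat T \to \dercomp(\smallcat u)$ — which I again denote $G$ — is fully faithful, and since $F \dashv G$ this is equivalent to the counit $\epsilon\colon FG \Rightarrow \mathrm{id}_{\cat T}$ being a natural isomorphism. Concretely, I want to prove that for all $A,B \in \cat T$ the natural map
\[
\Phi_B(A)\colon \cat T(A,B) \longrightarrow \dercomp(\smallcat u)(G(A),G(B)),\qquad f \longmapsto G(f),
\]
(which agrees with $\epsilon_A^*$ up to the adjunction isomorphism) is bijective. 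I will call $B$ \emph{good} when $\Phi_B(A)$ is bijective for every $A$, and the strategy is: (i) derived injectives are good; (ii) good objects are closed under shifts, cones and homotopy limits of towers; (iii) every object of $\cat A$ admits a derived injective resolution, hence is good.

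For step (i), let $E$ be a derived injective of $\cat A$. On one side, \Cref{coroll:fullyfaithfulness_target_dginj} identifies $\cat T(A,E)$ with $\Mod(H^0(\smallcat u))\bigl(\cat T(j(-),A),\cat T(j(-),E)\bigr)$ via $f \mapsto f_*$. On the other side, \Cref{prop:preserve-dginj}, \Cref{prop:H0G(dginj)_injective} and \Cref{remark:dginj_H0inj_uniquely} together show that $G(E)$ is the derived injective of $\dercomp(\smallcat u)$ associated to the injective object $H^0_t(G(E)) = \cat T(j(-),E)$, whence \Cref{prop:dginj_basicproperties}(3) identifies $\dercomp(\smallcat u)(G(A),G(E))$ with $\Mod(H^0(\smallcat u))\bigl(H^0_t(G(A)),\cat T(j(-),E)\bigr) = \Mod(H^0(\smallcat u))\bigl(\cat T(j(-),A),\cat T(j(-),E)\bigr)$ via $H^0_t$. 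A short diagram chase (both composites send $f$ to $f_*$) shows that $\Phi_E(A)$ is the composite of these two isomorphisms, so $E$ is good; \Cref{coroll:fullyfaithfulness_heart} and \Cref{coroll:fullyfaithfulness_target_rightaisle} are the analogous, easier, statements for objects of the heart and of $\cat T_{\geq 0}$, useful as a cross-check.

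For step (ii): closure under shifts is immediate, since $\Phi_{B[k]}(A)$ is canonically identified with $\Phi_B(A[-k])$; closure under cones follows from the five lemma applied to the two long exact sequences obtained by applying $\cat T(A,-)$ and $\dercomp(\smallcat u)(G(A),-)$ to a triangle $B' \to B \to B''$ with $B',B''$ good (the second sequence is exact because $G$ is triangulated), linked by the $\Phi$'s; and closure under homotopy limits of towers follows because $G$, being a right adjoint triangulated functor, preserves products and hence homotopy limits of towers, so that the two Milnor $\varprojlim$–$\varprojlim^{1}$ sequences computing $\cat T(A,\holim_n B_n)$ and $\dercomp(\smallcat u)(G(A),\holim_n G(B_n))$ are isomorphic and the five lemma applies once more. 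Then step (iii): since $\cat A$ has enough derived injectives (\Cref{lemma:heart_Grothendieck}), every object $B$ of $\cat A$ admits a derived injective resolution in the sense of \cite{genovese-lowen-vdb-dginj}, i.e.\ $B \cong \holim_n B_n$ along a tower $\cdots \to B_1 \to B_0$ in which $B_0$ and each $\cone(B_n \to B_{n-1})$ is a shift of a derived injective; inducting with (ii) shows each $B_n$ is good, and a final application of (ii) shows $B$ is good. Hence $\Phi$ is bijective everywhere, the counit $\epsilon$ is a natural isomorphism, and $G$ is fully faithful. The delicate point I expect is precisely step (iii) together with the compatibility of $G$ with homotopy limits: one must know that a general — possibly cohomologically unbounded — object is the homotopy limit of such a tower of shifted derived injectives, and control the $\varprojlim^{1}$ terms on both sides; this is where the theory of derived injective resolutions of \cite{genovese-lowen-vdb-dginj} and the Grothendieck hypothesis on the heart genuinely enter.
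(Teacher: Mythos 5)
Your steps (i) and (ii) are sound: derived injectives are ``good'' by \Cref{coroll:fullyfaithfulness_target_dginj} combined with \Cref{prop:preserve-dginj}, and goodness is closed under shifts, cones, and homotopy limits of towers (a right adjoint triangulated functor preserves products, hence such homotopy limits, and the Milnor sequences plus the five lemma do the rest). The gap is exactly where you suspect it, in step (iii), and it is a genuine one: under Setup \ref{setup:GabrielPopescu} there is no result guaranteeing that an \emph{arbitrary} object $B \in \cat T$ is the homotopy limit of a tower whose layers are shifted derived injectives. The derived injective resolutions of \cite{genovese-lowen-vdb-dginj} are constructed for left-bounded objects (those in some $\cat T_{\geq -N}$), mirroring ordinary injective coresolutions; a cohomologically unbounded-below object admits no such resolution. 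Moreover, even for $B \in \cat T_{\geq 0}$, the convergence $B \xrightarrow{\sim} \holim_n B_n$ requires showing that the homotopy limit of the error terms $\cone(B \to B_n) \in \cat T_{\geq n+1}$ vanishes. Non-degeneracy only gives $\bigcap_n \cat T_{\geq n} = 0$ objectwise; it does not control the $\varprojlim^1$ term, nor does it ensure that products of objects in $\cat T_{\geq n}$ stay in $\cat T_{\geq n}$. What is needed is a left-completeness hypothesis on the t-structure, which is not among the assumptions of Setup \ref{setup:GabrielPopescu}. So your argument, as it stands, proves full faithfulness of $G$ only on a left-bounded, left-complete part of $\cat A$.

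The paper's proof sidesteps resolutions entirely. Having already established that $F$ is t-exact (\Cref{coroll:leftadj_texact}), it uses non-degeneracy to reduce the assertion ``the counit $\varepsilon_A \colon FG(A) \to A$ is an isomorphism'' to the assertion that $H^0(\varepsilon_{A[n]})$ is an isomorphism for every $n$; t-exactness of $F$ lets one rewrite $H^0(FG(A[n]))$ as $F^\heartsuit(H^0(G(\tau_{\leq 0}A[n])))$, and the resulting map in the heart is tested against objects $B \in \cat T^\heartsuit$, where it becomes exactly the map $f \mapsto f_*$ of \Cref{coroll:fullyfaithfulness_target_rightaisle}. This works uniformly for unbounded objects precisely because non-degeneracy detects isomorphisms on cohomology, with no convergence of towers required. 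If you want to salvage your approach, you would either have to add left-completeness to the standing hypotheses, or replace step (iii) by the cohomological reduction just described --- at which point you have essentially rediscovered the paper's argument.
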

	\begin{proof}
		We want to show that the counit morphism
		\begin{equation} \label{eq:counit_want_iso}
			\varepsilon_A \colon FG(A) \to A
		\end{equation}
		is an isomorphism for all $A \in \cat T$. First, we notice that \eqref{eq:counit_want_iso} is an isomorphism if and only if
		\[
		H^0(F(G(A[n]))) \to H^0(A[n])
		\]
		is an isomorphism for all $n \in \mathbb Z$. Since $F$ is t-exact, it commutes with cohomologies and we may identify
		\[
		H^0(F(G(A[n]))) = F(H^0(G(A[n]))) = F^\heartsuit(H^0(G(A[n]))) .
		\]
		Moreover, notice that the natural morphism $\tau_{\leq 0}(A[n]) \to A[n]$ induces isomorphisms
		\[
		H^0(\tau_{\leq 0}(A[n])) \to H^0(A[n])
		\]
		and
		\[
		H^0(G(\tau_{\leq 0}(A[n])) = \cat T(j(-),\tau_{\leq 0}(A[n])) \xrightarrow{\sim} \cat T(j(-),A[n]) = H^0(G(A[n])).
		\]
		We are going to view them as identifications. Hence, we conclude that \eqref{eq:counit_want_iso} is an isomorphism if and only if the morphism
		\[
		F^\heartsuit(H^0(G(\tau_{\leq 0}(A[n])))) \to H^0(\tau_{\leq 0}(A[n]))
		\]
		is an isomorphism in the heart $\cat T^\heartsuit$. Hence, we may ease notation and reduce ourselves to prove that \emph{the morphism
			\begin{equation} \label{eq:counit_iso_reduced}
				e \colon F^\heartsuit(H^0(G(A))) \to H^0(A)
			\end{equation}
			is an isomorphism in $\cat T^\heartsuit$ for all $A \in \cat T_{\leq 0}$.} This morphism \eqref{eq:counit_iso_reduced} is obtained from \eqref{eq:counit_want_iso} by taking cohomology and making the identifications discussed above.
		
		Now, \eqref{eq:counit_iso_reduced} is an isomorphism if and only if, for all objects $B \in \cat T^\heartsuit$, we have that
		\begin{equation}
			e^* \colon \cat T(H^0(A),B) \to \cat T(F^\heartsuit(H^0(G(A))), B)
		\end{equation}
		is an isomorphism. Since $B \in \cat T^\heartsuit$ and $A \in \cat T_{\leq 0}$, we may further identify
		\[
		\cat T(A,B)= \cat T(H^0(A),B).
		\]
		The adjunction $F^\heartsuit \dashv G^\heartsuit$ yields an isomorphism
		\begin{align*}
			\cat T(F^\heartsuit(H^0(G(A))), B) &\cong \Mod(H^0(
			\smallcat u))(H^0(G(A)), H^0(G(B)) \\
			&= \Mod(H^0(\smallcat u))(\cat T(j(-),A), \cat T(j(-),B)),
		\end{align*}
		and moreover it is easy to check that the composition
		\[
		\cat T(A,B)= \cat T(H^0(A),B) \xrightarrow{e^*} \cat T(F^\heartsuit(H^0(G(A))), B) \cong \Mod(H^0(\smallcat u))(\cat T(j(-),A), \cat T(j(-),B))
		\]
		is actually the natural morphism which maps $f \in \cat T(A,B)$ to $f_* \colon \cat T(j(-),A) \to \cat T(j(-),B)$. Hence, if we finally show that this composition $f \mapsto f_*$ is an isomorphism, we will conclude that $e^*$ is an isomorphism, as we want. But this now follows directly from Corollary \ref{coroll:fullyfaithfulness_target_rightaisle}.
	\end{proof}

\subsubsection{Conclusion}
We are now in position to provide the proof of our main result. 
\begin{proof}[Proof of \Cref{thm:GabrielPopescu}]
	It follows from \Cref{prop:existence-left-adjoint}, \Cref{coroll:fullyfaithfulness_heart}, \Cref{coroll:leftadj_texact}, \Cref{rem:quasifully-faithful} and \Cref{prop:rightadj-fully-faithful}.
\end{proof} 

\subsection{An application: derived categories of Grothendieck abelian categories and dg-en\-hance\-ments}\label{subsec:application} In this part, we assume that $\basering k$ is an ordinary commutative ring, and as usual every category (dg or triangulated) will be implicitly taken over $\basering k$.

If $\cat T$ is a triangulated category, a \emph{dg-enhancement} of $\cat T$ is by definition a pretriangulated category $\cat A$ such that there is a triangulated equivalence $H^0(\cat A) \cong \cat T$. Many categories admit dg-enhancements, for examples derived categories $\dercomp(\mathfrak A)$ of abelian categories: a dg-enhancement of $\dercomp(\mathfrak A)$ can be described as a dg-quotient, cf. \cite{canonaco-stellari-dgenh-survey}. Triangulated categories which admit a dg-enhancement are alled \emph{algebraic}. When such enhancements exist, it is an interesting and important problem to understand whether they are \emph{unique}, in the following sense:
\begin{definition}
    Let $\cat T$ be an algebraic triangulated category. Then, $\cat T$ \emph{has a unique dg-en\-hance\-ment} if for given pretriangulated dg-categories $\cat A$ and $\cat B$ such that $H^0(\cat A) \cong H^0(\cat B) \cong \cat T$, there exists a quasi-equivalence $\cat A \cong \cat B$ (in the sense of quasi-functors, cf. \ref{subsubsec:bimod_qfun}).
\end{definition}

The uniqueness of dg-enhancements has been addressed in recent works. In \cite{canonaco-stellari-neeman-dgenh-all} uniqueness is proved for all kinds of derived categories of abelian categories; in \cite{antieau-stableenh} uniqueness results are proved for stable $\infty$-categorical enhancements. In particular, it is now known that the derived category $\dercomp(\mathfrak G)$ of a Grothendieck abelian category $\mathfrak G$ has a unique dg-enhancement: see \cite{canonoaco-stellari-dgenh-grothendieck} for the original proof.

Here, we show how Theorem \ref{thm:GabrielPopescu} can be applied to achieve a quick proof of the uniqueness of dg-enhancement in the case of derived categories of Grothendieck abelian categories. The idea of the proof is actually quite similar to the one of \cite[Theorem 6.6]{antieau-stableenh}.
\begin{theorem} \label{thm:dgenh_uniqueness_grothendieck}
    Let $\mathfrak G$ be a Grothendieck abelian category. Then, the derived category $\dercomp(\mathfrak G)$ has a unique dg-enhancement.
\end{theorem}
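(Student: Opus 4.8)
We use our derived Gabriel--Popescu theorem to realise \emph{every} dg-enhancement of $\dercomp(\mathfrak G)$ as one and the same dg-quotient of $\dercompdg(R)$ for a fixed ordinary ring $R$, and then conclude by the universal property of the dg-quotient. Fix a generator $U$ of $\mathfrak G$ and set $R = \mathfrak G(U,U)$; fix also the standard dg-enhancement $\dercompdg(\mathfrak G)$ (say, via h-injective complexes), which by \Cref{example:dercat_Grothendieckabelian} is a t-dg-category satisfying \Cref{setup:GabrielPopescu} with single generator $U$. Let $\cat A$ be an arbitrary dg-enhancement, with a fixed triangulated equivalence $H^0(\cat A) \cong \dercomp(\mathfrak G)$; transport the standard t-structure along it, so that $\cat A$ becomes a t-dg-category whose heart is identified with $\mathfrak G$. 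Since the hypotheses of \Cref{setup:GabrielPopescu} depend only on the triangulated category with t-structure $H^0(\cat A)$ (see the remark following \Cref{setup:GabrielPopescu}), the t-dg-category $\cat A$ satisfies \Cref{setup:GabrielPopescu} as well, again with the single generator $U$ seen in the heart of $H^0(\cat A)$.

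For $\cat B \in \{\cat A,\, \dercompdg(\mathfrak G)\}$ write $R_{\cat B} = \tau_{\leq 0}\cat B(U,U)$, the dg-algebra appearing in \Cref{coroll:GabrielPopescu_singleobj}. Since $H^0(\cat B)(U,U[n]) \cong \dercomp(\mathfrak G)(U,U[n])$ vanishes for $n<0$, the complex $R_{\cat B}$ is concentrated in nonpositive degrees and acyclic in negative degrees, so the canonical quotient map of dg-algebras $q_{\cat B}\colon R_{\cat B} \to H^0(R_{\cat B}) = R$ is a quasi-isomorphism; consequently $\Ind_{q_{\cat B}} \colon \dercompdg(R_{\cat B}) \to \dercompdg(R)$ is a t-exact quasi-equivalence carrying the free module to the free module (cf. \S\ref{subsubsec:ind_res}). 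Composing the quasi-functors of \Cref{coroll:GabrielPopescu_singleobj} with this equivalence produces a t-exact quasi-functor $\bar F_{\cat B}\colon \dercompdg(R) \to \cat B$ with quasi-fully faithful right adjoint $\bar G_{\cat B}$, such that $H^0(\bar F_{\cat B})$ sends the free module $R$ to $U$ and $(\cat B,\, H^0(\bar F_{\cat B}))$ is a dg-quotient of $\dercompdg(R)$ by the full dg-subcategory $\ker\bigl(H^0(\bar F_{\cat B})\bigr)$ (transport the dg-quotient statement of \Cref{coroll:GabrielPopescu_singleobj} along the quasi-equivalence $\Ind_{q_{\cat B}}$).

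The crucial point is that $\ker\bigl(H^0(\bar F_{\cat B})\bigr) \subseteq \dercomp(R)$ does not depend on $\cat B$. Passing to hearts, the induced right adjoint $H^0(\bar G_{\cat B})^{\heartsuit}\colon \mathfrak G \to \Mod(R)$ is, by construction, naturally isomorphic to the functor $X \mapsto \mathfrak G(U,X)$, i.e.\ the classical Gabriel--Popescu embedding, which does not see $\cat B$; hence its left adjoint $H^0(\bar F_{\cat B})^{\heartsuit}$ is the same functor $L\colon \Mod(R)\to\mathfrak G$ for both choices of $\cat B$, up to natural isomorphism, and the Serre subcategory $\mathcal S := \ker(L) \subseteq \Mod(R)$ is intrinsic. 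Because $\bar F_{\cat B}$ is t-exact and the t-structure on $H^0(\cat B) \cong \dercomp(\mathfrak G)$ is non-degenerate, one has $H^0(\bar F_{\cat B})(M) \cong 0$ if and only if $H^n(M) \in \mathcal S$ for all $n \in \mathbb Z$; thus $\ker\bigl(H^0(\bar F_{\cat B})\bigr)$ is exactly the full dg-subcategory $\mathcal K \subseteq \dercompdg(R)$ on those $M$ all of whose cohomology objects lie in $\mathcal S$, independently of $\cat B$. Therefore both $\cat A$ and $\dercompdg(\mathfrak G)$ are dg-quotients of $\dercompdg(R)$ by $\mathcal K$, so by the universal property of the dg-quotient (\S\ref{subsubsec:dgquotients_localizations}) they are isomorphic in $\Hqe(\basering k)$; in particular there is a quasi-equivalence $\cat A \cong \dercompdg(\mathfrak G)$. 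Any two dg-enhancements of $\dercomp(\mathfrak G)$ are thus both quasi-equivalent to $\dercompdg(\mathfrak G)$, hence to each other, which is the assertion.

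The main obstacle I expect is the bookkeeping in the last two paragraphs: identifying, uniformly in $\cat B$, the heart of the induced adjunction with the classical Gabriel--Popescu functor $\mathfrak G(U,-)$ (which requires being careful about the identifications $H^0(\cat B)^{\heartsuit}\cong\mathfrak G$ and $H^0(R_{\cat B})\cong R$ and about $\Ind_{q_{\cat B}}$ inducing the identity on hearts), and checking that t-exactness together with non-degeneracy genuinely recovers $\ker\bigl(H^0(\bar F_{\cat B})\bigr)$ from $\mathcal S$ alone. The auxiliary facts that $R_{\cat B}$ is formal (quasi-isomorphic to $R$) and that $\Ind_{q_{\cat B}}$ is a t-exact quasi-equivalence are routine and should be recorded explicitly.
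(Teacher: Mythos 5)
Your argument is correct and follows essentially the same route as the paper's: apply the derived Gabriel--Popescu theorem to each enhancement (with the t-structure transported from $\dercomp(\mathfrak G)$), identify the kernels inside the derived dg-category of a single ordinary ring by passing to hearts via t-exactness and non-degeneracy, and conclude by the universal property of the dg-quotient. The only organizational differences are that you normalize to the single ring $R$ through the quasi-isomorphism $\tau_{\leq 0}\cat B(U,U) \to H^0(\cat B)(U,U)$ and compare every enhancement to the fixed reference $\dercompdg(\mathfrak G)$, whereas the paper compares two arbitrary enhancements directly via the ring isomorphism $R_1 \cong R_2$ and the functor $\Ind_f$ --- in both cases the decisive point is that the induced adjunction on hearts is the classical Gabriel--Popescu adjunction, which does not see the enhancement.
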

\begin{proof}
Let $\cat A$ and $\cat B$ be pretriangulated dg-categories such that $H^0(\cat A) \cong H^0(\cat B) \cong \dercomp(\mathfrak G)$. The derived category $\dercomp(\mathfrak G)$ has the t-structure described in Example \ref{example:dercat_Grothendieckabelian}, which in particular is generated by a chosen generator $U \in \mathfrak G$. We may use the above triangulated equivalences to transport this t-structure on $H^0(\cat A)$ and $H^0(\cat B)$, thus making $\cat A$ and $\cat B$ t-dg-categories satisfying the assumptions of Setup \ref{setup:GabrielPopescu}. In particular, we have generators $U_1$ of $\cat A$ and $U_2$  of $\cat B$ with the property that they actually live in the hearts:
\begin{align*}
    & U_1 \in H^0(\cat A)^\heartsuit, \\
    & U_2 \in H^0(\cat B)^\heartsuit.
\end{align*}
It is immediate to observe that $\cat A(U_1,U_1)$ and $\cat B(U_2,U_2)$ are complexes concentrated in nonnegative degrees. Hence, we have that
\begin{align*}
    R_1 & := \tau_{\leq 0} \cat A(U_1,U_1) \cong H^0(\cat A)(U_1,U_1), \\
    R_2 & := \tau_{\leq 0} \cat B(U_2,U_2)  \cong H^0(\cat B)(U_2,U_2),
\end{align*}
and in particular $R_1$ and $R_2$ are ordinary rings. If we call $T \colon H^0(\cat A) \to H^0(\cat B)$ the equivalence supplied by the hypothesis, we have by construction that it is t-exact, $T(U_1)=U_2$ and there is an isomorphism
\[
f \colon R_1 = H^0(\cat A)(U_1, U_1) \xrightarrow{\sim} H^0(\cat B)(U_2,U_2) = R_2
\]
induced by $T$.

The derived Gabriel-Popescu theorem (Theorem \ref{thm:GabrielPopescu}) yields the following diagram of quasi-functors:
\[
\begin{tikzcd}
\ker(F_1) \arrow[r] & \dercompdg(R_1) \arrow[r, "F_1"] \arrow[d, shift right, "\Ind_f"'] & \cat A \\
\ker(F_2) \arrow[r] & \dercompdg(R_2) \arrow[u, shift right, "\Ind_{f^{-1}}"'] \arrow[r, "F_2"] & \cat B.
\end{tikzcd}
\]
    $F_1$ and $F_2$ are t-exact and realize $\cat A$ and $\cat B$ as dg-quotients of $\dercompdg(R_1)$ and $\dercompdg(R_2)$ respectively by $\ker(F_1)$ and $\ker(F_2)$. The dg-functors $\Ind_{f}$ and $\Ind_{f^{-1}}$ are dg-equivalences inverse to each other (cf. \S \ref{subsubsec:ind_res}). Moreover, we have that $\Ind_{f^{-1}} \cong \Res_f$ and $\Ind_f \cong \Res_{f^{-1}}$, and it is immediate to prove that they are both t-exact with respect to the canonical t-structures. If we could show that $\Ind_f$ and $\Ind_{f'}$ restrict to equivalences
    \[
    \Ind_{f} \colon \ker(F_1) \rightleftarrows \ker(F_2) : \Ind_{f^{-1}}
    \]
    we could conclude that $\cat A$ is quasi-equivalent to $\cat B$, thanks to the universal property of the dg-quotient (cf. \S \ref{subsubsec:dgquotients_localizations}).
    
    We then go on to prove that $\Ind_{f}(\ker(F_1)) \subseteq \ker(F_2)$. The reverse inclusion will follow from $\Ind_{f^{-1}}(\ker(F_2)) \subseteq \ker(F_1)$, which can be proved in the same way. For the sake of simplicity, we abuse notation and we identify $F_i=H^0(F_i)$ ($i=1,2$). Let $X \in \ker(F_1)$. We want to prove that $F_2(\Ind_f(X))\cong 0$ in $H^0(\cat B)$. The t-structure on $\cat B$ is non-degenerate and both $F_2$ and $\Ind_f$ are t-exact, hence this is equivalent to:
    \begin{equation} \label{eq:dgenh_uniqueness_proof_kernels}
    H^k(F_2(\Ind_f(X))) \cong F_2(\Ind_f(H^k(X))) \cong 0
    \end{equation}
    in $H^0(\cat B)^\heartsuit$, for all $k \in \mathbb Z$.
    
    We now consider the following diagram of Grothendieck (in particular cocomplete) abelian categories and exact functors:
    \[
    \begin{tikzcd}
    \Mod(R_1) \arrow[d, "\Ind_f"] \arrow[r, "F_1"] & H^0(\cat A)^\heartsuit \arrow[d, "T"] \\
    \Mod(R_2) \arrow[r, "F_2"]                     & H^0(\cat B)^\heartsuit.               
    \end{tikzcd}
    \]
    We can prove that this diagram is actually commutative. Since $F_1, F_2$ are cocontinuous (being left adjoints) and both $\Ind_f$ and $T$ are also cocontinuous (being equivalences), to check that $T \circ F_1 \cong F_2 \circ \Ind_f$ as functors $\Mod(R_1) \to H^0(\cat B)^\heartsuit$ is equivalent to check that they are equivalent on the generator $R_1$ of $\Mod(R_1)$. Indeed:
    \begin{align*}
        T(F_1(R_1)) & \cong T(U_1) \\
         & \cong U_2 \\
         & \cong F_2(R_2) \\
         & \cong F_2(\Ind_f(R_1)),
    \end{align*}
    and we can directly check that this isomorphism is actually natural, namely, compatible with morphisms $R_1 \to R_1$ in $\Mod(R_1)$ (it follows from the Yoneda lemma and how these functors are defined).
    
    Finally, we can check \eqref{eq:dgenh_uniqueness_proof_kernels} as follows:
    \begin{align*}
        F_2(\Ind_f(H^k(X))) & \cong T(F_1(H^k(X))) \\
        & \cong H^k(T(F_1(X))) && \text{(t-exactness)} \\
        & \cong 0 && \text{($X \in \ker(F_1)$).}
    \end{align*}
    This allows us to conclude.
\end{proof}

\appendix 

\section{A ``Baer criterion'' for derived injectivity} \label{appendix:dginj}
In this appendix, we prove a characterization of derived injectivity which holds under some additional assumptions on the given t-structure. In particular, we will obtain a ``Baer-like'' criterion for derived injectives in the derived category $\dercomp(R)$ of a $\basering k$-dg-algebra $R$ which is concentrated in nonpositive degrees (cf. Proposition \ref{prop:dercomp_naturaltstruct} for the natural t-structure on $\dercomp(R)$).

We first recall a (well-known) result which characterizes injective objects in the heart in terms of a vanishing condition.
\begin{lemma} \label{lemma:injective_vanishing_triangulated}
    Let $\cat T$ be a triangulated category with a t-structure, and let $I \in \cat T^\heartsuit$ be an object in the heart. Then, $I$ is injective if and only if
    \[
    \cat T(Z[-1],I) \cong 0,
    \]
    for all $Z \in \cat T^\heartsuit$.
\end{lemma}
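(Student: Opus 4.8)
The plan is to prove the two implications separately, in each case converting the statement about the abelian heart into a statement about distinguished triangles in $\cat T$ and then applying the cohomological functors $H^\bullet_t$ and $\cat T(-,I)$.

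For the implication ``$I$ injective $\Rightarrow$ the vanishing holds'', I would fix $Z \in \cat T^\heartsuit$ and a morphism $g \colon Z[-1] \to I$, and argue that $g = 0$. Completing $g$ to a distinguished triangle $Z[-1] \xrightarrow{g} I \xrightarrow{i} C \xrightarrow{p} Z$, the first step is to check that $C$ lies in $\cat T^\heartsuit$: indeed $I \in \cat T_{\geq 0}$ and $Z[-1] \in \cat T_{\geq 1} \subseteq \cat T_{\geq 0}$, so $C \in \cat T_{\geq 0}$ since $\cat T_{\geq 0}$ is closed under extensions, while rotating to $I \to C \to Z \to I[1]$ together with $I, Z \in \cat T_{\leq 0}$ and closure of $\cat T_{\leq 0}$ under extensions gives $C \in \cat T_{\leq 0}$. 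Then the rotated triangle induces a short exact sequence $0 \to I \xrightarrow{i} C \to Z \to 0$ in $\cat T^\heartsuit$ (via the cohomological long exact sequence), which splits because $I$ is injective; hence $i$ is a split monomorphism in $\cat T^\heartsuit$, and therefore in $\cat T$, so $i_* \colon \cat T(Z[-1], I) \to \cat T(Z[-1], C)$ is injective. Since $i \circ g = 0$ (consecutive arrows of a distinguished triangle), we get $i_*(g) = 0$ and thus $g = 0$.

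For the converse, assuming $\cat T(Z[-1], I) = 0$ for every $Z \in \cat T^\heartsuit$, I would take an arbitrary short exact sequence $0 \to A \xrightarrow{u} B \to C \to 0$ in $\cat T^\heartsuit$ and a morphism $f \colon A \to I$, and extend $f$ along $u$. The short exact sequence gives a distinguished triangle which, after rotation, reads $C[-1] \to A \xrightarrow{u} B \to C$; applying $\cat T(-, I)$ yields the exact sequence $\cat T(B,I) \xrightarrow{u^*} \cat T(A,I) \to \cat T(C[-1], I)$ whose rightmost term vanishes by hypothesis. So $u^*$ is surjective, $f = \tilde f \circ u$ for some $\tilde f \colon B \to I$, and since $\cat T^\heartsuit$ is a full subcategory of $\cat T$ this $\tilde f$ is the required extension; hence $I$ is injective.

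The only point requiring a little care is the verification that the cone $C$ in the first implication belongs to the heart --- this is precisely what makes ``$I$ is injective'' applicable --- but it follows at once from closure of the two aisles under extensions and does not need non-degeneracy of the t-structure. The remaining steps are routine manipulations of distinguished triangles and long exact sequences.
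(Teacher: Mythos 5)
Your proof is correct and follows essentially the same route as the paper's: the converse direction is the identical long-exact-sequence argument, and your forward direction (splitting the induced extension $0 \to I \to C \to Z \to 0$ and using that $i$ is a split mono with $i \circ g = 0$) is just a repackaging of the paper's step of lifting $1_I$ along $g^*$ and invoking $f^* g^* = 0$. The extra care you take in checking $C \in \cat T^\heartsuit$ via extension-closure of both aisles is exactly the fact the paper cites as ``the heart is extension-closed in $\cat T$''.
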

\begin{proof}
Assume that the above vanishing condition holds, and let
\[
0 \to A \to B \to C \to 0
\]
be a short exact sequence in $\cat T^\heartsuit$. This induces a distinguished triangle
\[
A \to B \to C
\]
in $\cat T$. We consider the long exact sequence:
\[
\cat T(A[1], I) \to \cat T(B,I) \to \cat T(C,I) \to \cat T(A,I) \to \cat T(C[-1], I).
\]
Since $A[1] \in \cat T_{\leq -1}$, we have $\cat T(A[1], I) \cong 0$; moreover, the hypothesis ensures that $\cat T(C[-1], I) \cong 0$. We conclude that $\cat T^\heartsuit(-,I)$ is exact, as we wanted.

On the other hand, let $I$ be injective, $Z \in \cat T^\heartsuit$ and let $f \in \cat T(Z[-1],I)$. We want to prove that $f=0$. We take the distinguished triangle:
\begin{equation} \label{eq:injective_vanishing_triangulated_disttria}
Z[-1] \xrightarrow{f} I \xrightarrow{g} A \to Z. \tag{$\ast$}
\end{equation}
Since $I$ and $Z$ are in $\cat T^\heartsuit$, the same is true for $A$ (the heart is extension-closed in $\cat T$). Moreover, the sequence
\[
0 \to I \to A \to Z \to 0
\]
is exact in $\cat T^\heartsuit$. Since $I$ is injective, the morphism
\[
g^* \colon \cat T(A, I) \twoheadrightarrow \cat T(I,I)
\]
is surjective. We now take the long exact sequence associated to \eqref{eq:injective_vanishing_triangulated_disttria}:
\[
\cat T(A, I) \xrightarrow{g^*} \cat T(I,I) \xrightarrow{f^*} \cat T(Z[-1],I).
\]
We have that $f=f^*(1_I)$, and by surjectivity of $g^*$ we have that $1_I = g^*(\alpha)$ for some $\alpha \in \cat T(A,I)$. By exactness, we conclude that $f = f^* g^*(\alpha) =0$, as claimed.
\end{proof}

Next, we prove a refined version of Proposition \ref{prop:dginj_equivalent_definitions}, under the additional assumption that the t-structure is non-degenerate and its heart has a set of generators.
\begin{proposition} \label{prop:dginj_test_generators_heart}
    Let $\cat T$ be a triangulated category with a non-degenerate t-structure (Definition \ref{def:tstruct_nondegenerate}) which has derived injectives (Definition \ref{def:dginj}). Moreover, assume that the heart $\cat T^\heartsuit$ has a set $\mathcal G$ of generators. Then, for any object $E \in \cat T$, the following conditions are equivalent:
    \begin{enumerate}
        \item \label{item:dginj_test_generators_heart_1} $E$ is derived injective (Definition \ref{def:dginj}).
        \item \label{item:dginj_test_generators_heart_2} $E \in \cat T_{\geq 0}$ and for any $Z \in \cat T_{\geq 0}$ we have
	   \begin{equation*}
	       \cat T(Z[-1],E) \cong 0.
	   \end{equation*}
	    \item \label{item:dginj_test_generators_heart_3} $E \in \cat T_{\geq 0}$ and for any $Z \in \cat T^\heartsuit$ we have
	    \begin{equation}
	        \cat T(Z[-n], E) \cong 0,
	    \end{equation}
	    for all $n\geq 1$.
        \item \label{item:dginj_test_generators_heart_4} $E \in \cat T_{\geq 0}$, its zeroth cohomology $H^0(E)$ is an injective object in $\cat T^\heartsuit$ and for any generator $G \in \mathcal G$ in the heart we have:
        \begin{equation}
            \cat T(G[-n], E) \cong 0,
        \end{equation}
        for any $n \geq 1$.
    \end{enumerate}
\end{proposition}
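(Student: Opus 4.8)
The plan is to prove the cycle $\eqref{item:dginj_test_generators_heart_1} \Rightarrow \eqref{item:dginj_test_generators_heart_2} \Rightarrow \eqref{item:dginj_test_generators_heart_3} \Rightarrow \eqref{item:dginj_test_generators_heart_4} \Rightarrow \eqref{item:dginj_test_generators_heart_1}$. The equivalence $\eqref{item:dginj_test_generators_heart_1} \Leftrightarrow \eqref{item:dginj_test_generators_heart_2}$ is exactly Proposition \ref{prop:dginj_equivalent_definitions}, so $\eqref{item:dginj_test_generators_heart_1} \Rightarrow \eqref{item:dginj_test_generators_heart_2}$ is free. For $\eqref{item:dginj_test_generators_heart_2} \Rightarrow \eqref{item:dginj_test_generators_heart_3}$, given $Z \in \cat T^\heartsuit$ and $n \geq 1$ I would write $Z[-n] = (Z[-(n-1)])[-1]$ with $Z[-(n-1)] \in \cat T_{\geq n-1} \subseteq \cat T_{\geq 0}$ and apply $\eqref{item:dginj_test_generators_heart_2}$. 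For $\eqref{item:dginj_test_generators_heart_3} \Rightarrow \eqref{item:dginj_test_generators_heart_4}$, the vanishing of $\cat T(G[-n],E)$ is the special case $Z = G \in \mathcal G \subseteq \cat T^\heartsuit$, so the only point needing an argument is the injectivity of $H^0(E)$; this is obtained exactly as in the implication $\eqref{item:dginj_equivalent_definitions_2} \Rightarrow \eqref{item:dginj_equivalent_definitions_1}$ of Proposition \ref{prop:dginj_equivalent_definitions} (or via Lemma \ref{lemma:injective_vanishing_triangulated}): a short exact sequence $0 \to A \to B \to C \to 0$ in the heart gives a distinguished triangle, and applying $\cat T(-,E)$ together with $\cat T(A[1],E) = 0$ (as $A[1] \in \cat T_{\leq -1}$, $E \in \cat T_{\geq 0}$), $\cat T(C[-1],E) = 0$ (by $\eqref{item:dginj_test_generators_heart_3}$) and the identification $\cat T(-,E)|_{\cat T^\heartsuit} \cong \cat T^\heartsuit(-,H^0(E))$ (valid since $E \in \cat T_{\geq 0}$) shows $\cat T^\heartsuit(-,H^0(E))$ is exact.

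The main step is $\eqref{item:dginj_test_generators_heart_4} \Rightarrow \eqref{item:dginj_test_generators_heart_1}$. Since $\cat T$ has derived injectives and $I := H^0(E)$ is injective by hypothesis, the derived injective $L(I)$ exists; by Proposition \ref{prop:dginj_basicproperties} it lies in $\cat T_{\geq 0}$, satisfies $H^0(L(I)) \cong I$, and $f \mapsto H^0(f)$ is an isomorphism $\cat T(X,L(I)) \xrightarrow{\sim} \cat T^\heartsuit(H^0(X),I)$ for every $X$. Taking $X = E$ and transporting $\mathrm{id}_I$ produces $\eta \colon E \to L(I)$ with $H^0(\eta)$ an isomorphism. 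I would then set $C = \cone(\eta)$, giving a distinguished triangle $E \xrightarrow{\eta} L(I) \to C \to E[1]$. The long exact cohomology sequence, using $E, L(I) \in \cat T_{\geq 0}$ and that $H^0(\eta)$ is an isomorphism, forces $H^n_t(C) = 0$ for $n \leq -1$, i.e. $C \in \cat T_{\geq 0}$.

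Next I would show $\cat T(G[-n],C) = 0$ for all $G \in \mathcal G$ and $n \geq 0$ by applying $\cat T(G[-n],-)$ to the triangle. For $n \geq 1$ one has $\cat T(G[-n],E) = \cat T(G[-n-1],E) = 0$ by $\eqref{item:dginj_test_generators_heart_4}$, and $\cat T(G[-n],L(I)) = 0$ because $L(I)$, being derived injective, satisfies $\eqref{item:dginj_test_generators_heart_2}$ (applied to $G[-(n-1)] \in \cat T_{\geq 0}$); so the long exact sequence collapses. For $n = 0$ the map $\eta_* \colon \cat T(G,E) \to \cat T(G,L(I))$ is an isomorphism (both sides identify with $\cat T^\heartsuit(G,I)$ and $\eta_*$ becomes $H^0(\eta)_* = \mathrm{id}$) while $\cat T(G,E[1]) = \cat T(G[-1],E) = 0$, again forcing $\cat T(G,C) = 0$. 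To finish, suppose $C \neq 0$: by non-degeneracy (Definition \ref{def:tstruct_nondegenerate}) some $H^n_t(C) \neq 0$, and since $C \in \cat T_{\geq 0}$ there is a least such $m \geq 0$, so $C \in \cat T_{\geq m}$ and $\tau_{\leq m}C \cong H^m_t(C)[-m]$. As $\mathcal G$ generates $\cat T^\heartsuit$ and $H^m_t(C) \neq 0$, pick $G \in \mathcal G$ with $\cat T^\heartsuit(G,H^m_t(C)) \neq 0$, i.e. $\cat T(G[-m],H^m_t(C)[-m]) \neq 0$. Applying $\cat T(G[-m],-)$ to the truncation triangle $\tau_{\leq m}C \to C \to \tau_{\geq m+1}C \to (\tau_{\leq m}C)[1]$ and using that $G[-m] \in \cat T_{\leq m}$ while both $\tau_{\geq m+1}C$ and $(\tau_{\geq m+1}C)[-1]$ lie in $\cat T_{\geq m+1}$, the outer Hom's vanish and $\cat T(G[-m],H^m_t(C)[-m]) \cong \cat T(G[-m],C)$, which is nonzero — contradicting the previous point. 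Hence $C \cong 0$, $\eta$ is an isomorphism, and $E \cong L(I)$ is derived injective.

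The only genuinely delicate point is $\eqref{item:dginj_test_generators_heart_4} \Rightarrow \eqref{item:dginj_test_generators_heart_1}$: hypothesis $\eqref{item:dginj_test_generators_heart_4}$ only controls Hom's out of shifts of generators, so one cannot test derived injectivity directly against arbitrary $Z \in \cat T_{\geq 0}$ (and with only a \emph{set of generators} in the heart — not cocompleteness — one cannot write such $Z$ as a colimit of generators either). The device around this is to compare $E$ with the honest derived injective $L(H^0(E))$, note that the cone of the comparison map lies in $\cat T_{\geq 0}$ and annihilates every $\cat T(G[-n],-)$, and then detect its lowest nonzero cohomology, which lives in the heart and is therefore seen by a single generator, upgrading this to vanishing of the whole cone via non-degeneracy.
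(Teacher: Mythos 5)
Your proposal is correct, and for the easy implications it matches the paper exactly. For the hard implication $\eqref{item:dginj_test_generators_heart_4} \Rightarrow \eqref{item:dginj_test_generators_heart_1}$ you share the paper's key device --- lift $\mathrm{id}_{H^0(E)}$ to a comparison map $E \to L(H^0(E))$ and prove it is an isomorphism using non-degeneracy --- but you execute the final step differently. The paper runs an induction on cohomological degree, showing $\tau_{\leq n}f$ is an isomorphism for every $n$ via a diagram chase: it first proves that the connecting map $\mu_* \colon [Z,H^n(E')] \to [Z,(\tau_{\leq n-1}E')[n+1]]$ is an isomorphism for all $Z$ in the heart, then uses the vanishing hypotheses on $[G,(\tau_{\leq n}E)[n]]$ and $[G,(\tau_{\leq n}E)[n+1]]$ to get the same for $\lambda_*$, and concludes $H^n(f)_*$ is an isomorphism on each generator. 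You instead pass to the cone $C$ of the comparison map, check that $C \in \cat T_{\geq 0}$ and that $\cat T(G[-n],C)=0$ for all generators $G$ and all $n \geq 0$ (the case $n=0$ using that $\eta_*$ identifies with $H^0(\eta)_*=\mathrm{id}$ on $\cat T^\heartsuit(G,H^0(E))$), and then kill $C$ by locating its lowest nonvanishing cohomology $H^m_t(C)$, which is detected by a single generator since $\tau_{\leq m}C \cong H^m_t(C)[-m]$ receives all maps from $G[-m]$. This buys a shorter argument with no induction and only one appeal to non-degeneracy (to locate the bottom cohomology of $C$ and to conclude $C \cong 0$), at the cost of being slightly less explicit about the individual maps $H^n(f)$; both versions use the hypotheses in the same essential way.
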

\begin{proof}
The equivalence $\eqref{item:dginj_test_generators_heart_1} \Leftrightarrow \eqref{item:dginj_test_generators_heart_2}$ is proved in Proposition \ref{prop:dginj_equivalent_definitions}.

$\eqref{item:dginj_test_generators_heart_2} \Rightarrow \eqref{item:dginj_test_generators_heart_3}$ is immediate.

The only nontrivial part of $\eqref{item:dginj_test_generators_heart_3} \Rightarrow \eqref{item:dginj_test_generators_heart_4}$ is to prove that $H^0(E)$ is an injective object in $\cat T^\heartsuit$. To do so, we can argue as in the proof of $\eqref{item:dginj_equivalent_definitions_2} \Rightarrow \eqref{item:dginj_equivalent_definitions_1}$ of Proposition \ref{prop:dginj_equivalent_definitions}. A slightly different argument is as follows. Since $E \in \cat T_{\geq 0}$ and hence $H^0(E) \cong \tau_{\leq 0} E$, we take the canonical distinguished triangle:
\[
H^0(E) \to E \to \tau_{\geq 1} E,
\]
and for $Z \in \cat T^\heartsuit$ the induced long exact sequence:
\[
\cat T(Z[-1], (\tau_{\geq 1} E)[-1]) \to \cat T(Z[-1], H^0(E)) \to \cat T(Z[-1], E).
\]
We have $\cat T(Z[-1], (\tau_{\geq 1} E)[-1]) \cong \cat T(Z, \tau_{\geq 1} E) \cong 0$, and $T(Z[-1], E) \cong 0$ by hypothesis. We conclude that
\[
\cat T(Z[-1], H^0(E)) \to 0
\]
is a monomorphism, therefore $\cat T(Z[-1], H^0(E)) \cong 0$. We now conclude invoking the above Lemma \ref{lemma:injective_vanishing_triangulated}.

We now turn to $\eqref{item:dginj_test_generators_heart_4} \Rightarrow \eqref{item:dginj_test_generators_heart_1}$, which is the actual nontrivial part of the proof (and the only one which uses the assumptions that the t-structure is non-degenerate and has derived injectives). We shall write $[-,-]$ for the hom-space in $\cat T$. By hypothesis, $H^0(E)$ is injective and the associated derived injective $L(H^0(E))$ exists, hence we can uniquely lift the identity morphism $H^0(E) \to H^0(E)$ to a morphism in $\cat T$:
    \begin{equation}
	f \colon E \to L(H^0(E)).
	\end{equation}
The goal is to show that $f$ is an isomorphism. We already know (Proposition \ref{prop:dginj_equivalent_definitions}) that $H^0(f) = \tau_{\leq 0} f = \operatorname{id} \colon H^0(E) \to H^0(E)$. We shall now argue by induction: for $n > 0$, assume that
	\[
	\tau_{\leq n-1} f \colon \tau_{\leq n-1} E \to \tau_{\leq n-1} L(H^0(E))
	\]
is an isomorphism. We want to prove that
	\[
	H^n(f) \colon H^n(E) \to H^n(L(H^0(E)))
	\]
is also an isomorphism, which will imply that $\tau_{\leq n} f$ is an isomorphism. For simplicity, call $L(H^0(E))=E'$ and consider the following commutative diagram, where the rows are canonical distinguished triangles:
\begin{equation} \label{diagram:compare_E_otherdginj_triangles}
	\begin{tikzcd}
		{(\tau_{\leq n-1}E)[n]} \arrow[r] \arrow[d, "\sim"'] & {(\tau_{\leq n} E)[n]} \arrow[r] \arrow[d] & H^n(E) \arrow[r, "\lambda"] \arrow[d, "H^n(f)"] & {(\tau_{\leq n-1}E)[n+1]} \arrow[d, "\sim"] \\
		{(\tau_{\leq n-1}E')[n]} \arrow[r]            & {(\tau_{\leq n}E')[n]} \arrow[r]    & H^n(E') \arrow[r, "\mu"]                 & {(\tau_{\leq n-1}E')[n+1].}          
	\end{tikzcd}
\end{equation}
	
Next, let $Z \in \cat T^\heartsuit$. We first prove that
\[
    \mu_* \colon [Z,H^n(E')] \to [Z,(\tau_{\leq n-1} E')[n+1] ]
\]
is an isomorphism. Indeed, take the canonical distinguished triangle:
\[
\tau_{\leq n-1} E' \to E' \to \tau_{\geq n} E'
\]
and the induced long exact sequence:
\[
	[Z, E'[n] ] \to [Z, (\tau_{\geq n} E') [n]] \to [Z, (\tau_{\leq n-1} E')[n+1]] \to [Z, E'[n+1]].
\]
$E'$ is a derived injective, so $[Z,E'[k]]=[Z[-k],E']=0$ for $k=n,n+1$ (recall that $n>0$), so we have that
\[
	[Z, (\tau_{\geq n} E') [n]] \xrightarrow{\sim} [Z, (\tau_{\leq n-1} E')[n+1]]
\]
is an isomorphism. Finally, since $Z$ is in the heart, we also have an isomorphism
\[
	[Z, \tau_{\leq 0} ((\tau_{\geq n} E') [n])] \xrightarrow{\sim} [Z, (\tau_{\geq n} E') [n]],
\]
and then we recall that $\tau_{\leq 0} ((\tau_{\geq n} E') [n]) \cong (\tau_{\leq n} \tau_{\geq n} E')[n] = H^n(E')$, and we conclude observing that the composition
\[
    [Z, H^n(E')] \xrightarrow{\sim } [Z, (\tau_{\geq n} E') [n]] \xrightarrow{\sim} [Z, (\tau_{\leq n-1} E')[n+1]]
\]
is indeed induced by $\mu$.
		
Now, let $G \in \mathcal G$ be a generator of $\cat T^\heartsuit$ and consider the following commutative diagram (compare with \eqref{diagram:compare_E_otherdginj_triangles}):
\begin{equation}
	\begin{tikzcd}
		{[G,H^n(E)]} \arrow[r, "\lambda_*"] \arrow[d, "H^n(f)_*"] & {[G, (\tau_{\leq n-1} E)[n+1]]} \arrow[d, "\sim"] \\
		{[G,H^n(E')]} \arrow[r, "\mu_*"]                          & {[G, (\tau_{\leq n-1} E')[n+1]].}                 
			\end{tikzcd}
\end{equation}
Since $G \in \mathcal G$ is an arbitrary generator, in order to prove that $H^n(f)$ is an isomorphism it is enough to prove that $H^n(f)_*$ above is an isomorphism. We have just proved that $\mu_*$ is an isomorphism, so we are left to check that $\lambda_*$ is an isomorphism. To do so, consider the long exact sequence:
\[
	[G, (\tau_{\leq n}E)[n]] \to [G, H^n(E)] \xrightarrow{\lambda_*} [G, (\tau_{\leq n-1} E)[n+1]] \to [G, (\tau_{\leq n}E)[n+1]].
\]
We need to show that both $[G, (\tau_{\leq n}E)[n]]=0$ and $[G, (\tau_{\leq n}E)[n+1]]=0$. First, notice that
\[
	[G, (\tau_{\leq n}E)[n]] \cong [G[-n], \tau_{\leq n} E] \cong [G[-n], E] = 0
\]
by hypothesis.  Next, Consider the canonical distinguished triangle:
\[
	\tau_{\leq n} E \to E \to \tau_{\geq n+1} E,
\]
and take the induced long exact sequence:
\[
	[G, (\tau_{\geq n+1}E)[n]] \to [G, (\tau_{\leq n}E) [n+1]] \to [G, E[n+1]].
\]
By assumption, $[G, E[n+1]]=0$, and
\[
	[G, (\tau_{\geq n+1}E)[n]] \cong [G[-n], \tau_{\geq n+1} E] = 0,
\]
since $G[-n] \in \cat T_{\leq n}$ and  $\tau_{\geq n+1} E \in \cat T_{\geq n+1}$. We finally conclude that
\[
	[G, (\tau_{\leq n}E)[n+1]]=0,
\] 
as claimed, which shows that $\lambda_*$ is an isomorphism as desired. The proof is now complete.
\end{proof}

Let us now have a closer look at the derived injectives of the derived category $\dercomp(R)$, where $R$ is a fixed $\basering k$-dg-algebra concentrated in nonpositive degrees. We first discuss a variant of Lemma \ref{lemma:injective_vanishing_triangulated} which follows from the classical Baer criterion for injectivity \cite{baer-injectivity}.
\begin{lemma} \label{lemma:injective_vanishing_derivedcategory}
Let $I \in \dercomp(R)^\heartsuit = \Mod(H^0(R))$. Then, $I$ is injective if and only if
\[
\dercomp(R)(Z[-1],I) \cong 0,
\]
for all $Z \in \dercomp(R)^\heartsuit$ which admit an epimorphism $H^0(R) \twoheadrightarrow Z$.
\end{lemma}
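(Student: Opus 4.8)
The plan is to reduce the statement to the classical Baer criterion \cite{baer-injectivity} in the module category $\Mod(H^0(R))$, which by \Cref{prop:dercomp_naturaltstruct} is the heart $\dercomp(R)^\heartsuit$. Write $S = H^0(R)$. First I would observe that an object $Z \in \Mod(S)$ admits an epimorphism $S \twoheadrightarrow Z$ precisely when $Z$ is cyclic, i.e.\ $Z \cong S/\mathfrak a$ for some right ideal $\mathfrak a \subseteq S$, and that as $Z$ ranges over all such objects $\mathfrak a$ ranges over all right ideals of $S$. So the assertion amounts to: $I$ is injective if and only if $\dercomp(R)((S/\mathfrak a)[-1], I) \cong 0$ for every right ideal $\mathfrak a \subseteq S$.

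Next I would compute $\dercomp(R)((S/\mathfrak a)[-1], I)$ with $\mathfrak a, I$ in the heart. The short exact sequence $0 \to \mathfrak a \xrightarrow{\iota} S \to S/\mathfrak a \to 0$ in $\dercomp(R)^\heartsuit$ induces a distinguished triangle $\mathfrak a \xrightarrow{\iota} S \to S/\mathfrak a \to \mathfrak a[1]$ in $\dercomp(R)$; applying $\dercomp(R)(-, I)$ yields the exact sequence
\[
\dercomp(R)(S, I) \xrightarrow{\iota^*} \dercomp(R)(\mathfrak a, I) \to \dercomp(R)((S/\mathfrak a)[-1], I) \to \dercomp(R)(S[-1], I).
\]
The crucial point is that $\dercomp(R)(S[-1], I) = \dercomp(R)(S, I[1]) \cong 0$. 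Indeed, since $R$ is concentrated in nonpositive degrees we have $S = H^0(R) \cong \tau_{\geq 0} R$ as objects of $\dercomp(R)$, so the canonical triangle $\tau_{\leq -1} R \to R \to S \to (\tau_{\leq -1} R)[1]$ reduces this to $\dercomp(R)(R, I[1]) \cong H^1(I) = 0$ (as $R$ represents $H^0$ and $I$ sits in degree $0$) together with $\dercomp(R)((\tau_{\leq -1}R)[1], I[1]) = \dercomp(R)(\tau_{\leq -1} R, I) \cong 0$ (since $\tau_{\leq -1}R \in \dercomp(R)_{\leq -1}$ and $I \in \dercomp(R)_{\geq 0}$).

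It follows that $\dercomp(R)((S/\mathfrak a)[-1], I) \cong \coker(\iota^*)$. As $S, \mathfrak a, I$ all lie in the heart, which is a full subcategory of $\dercomp(R)$, the map $\iota^*$ is identified with the restriction map $\Hom_S(S, I) \to \Hom_S(\mathfrak a, I)$ along $\mathfrak a \hookrightarrow S$. Hence the vanishing $\dercomp(R)((S/\mathfrak a)[-1], I) \cong 0$ holds for every right ideal $\mathfrak a$ if and only if every $S$-linear map $\mathfrak a \to I$ extends to $S$, which by Baer's criterion \cite{baer-injectivity} is equivalent to $I$ being injective in $\Mod(S)$. (The trivial implication also follows at once: if $I$ is injective then $\iota^*$ is already surjective, or one may simply invoke \Cref{lemma:injective_vanishing_triangulated}. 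Alternatively, the whole computation could be packaged as the identification $\dercomp(R)((S/\mathfrak a)[-1], I) \cong \Ext^1_S(S/\mathfrak a, I)$, but the direct argument avoids the Yoneda-versus-triangulated $\Ext$ comparison.) The only mildly delicate step is the vanishing $\dercomp(R)(S, I[1]) \cong 0$, which is exactly where the hypothesis that $R$ has no positive cohomology is used; everything else is bookkeeping with the t-structure and the full embedding of the heart.
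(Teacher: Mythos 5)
Your proof is correct and follows essentially the same route as the paper: both reduce to Baer's criterion by applying $\dercomp(R)(-,I)$ to the triangle coming from $0 \to \mathfrak a \to H^0(R) \to Z \to 0$ and using the hypothesis to kill $\dercomp(R)(Z[-1],I)$. The only difference is that you additionally verify $\dercomp(R)(H^0(R),I[1]) \cong 0$ (via the truncation triangle of $R$) so as to identify $\dercomp(R)(Z[-1],I)$ with the cokernel of the restriction map, whereas the paper skips this step: for the nontrivial direction one only needs exactness of $\dercomp(R)(H^0(R),I) \to \dercomp(R)(\mathfrak a,I) \to \dercomp(R)(Z[-1],I)$ together with the hypothesis, and the trivial direction is handled by \Cref{lemma:injective_vanishing_triangulated}.
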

\begin{proof}
The necessity of the condition is a particular case of the above Lemma \ref{lemma:injective_vanishing_triangulated}.

On the other hand, we take an object $I \in \dercomp(R)^\heartsuit$ which satisfies the above condition. We may check injectivity of $I$ using the Baer criterion. Indeed, it is enough to prove that, given an exact sequence
\[
0 \to B \to H^0(R) \to Z \to 0,
\]
in $\Mod(H^0(R))$, the induced morphism
\[
\Mod(H^0(R))(H^0(R), I) \to \Mod(H^0(R))(B,I)
\]
is surjective. The above short exact sequence induces a distinguished triangle
\[
Z[-1] \to B \to H^0(R) \to Z.
\]
We take the induced long exact sequence:
\[
\dercomp(R)(H^0(R), I) \to \dercomp(R)(B,I) \to \dercomp(R)(Z[-1],I).
\]
By hypothesis, $\dercomp(R)(Z[-1],I) \cong 0$, and since we may identify
\begin{align*}
    \dercomp(R)(B,I) & \cong \Mod(H^0(R))(B,I), \\
    \dercomp(R)(H^0(R), I) & \cong \Mod(H^0(R))(H^0(R), I),
\end{align*}
we conclude.
\end{proof}

We can now prove a result which resembles a ``Baer-like'' criterion for derived injectivity:
\begin{corollary}[``Baer criterion'' for derived injectivity] \label{corollary:derivedBaer}
Let $E \in \dercomp(R)$. The following are equivalent:
	\begin{enumerate}
		\item \label{item:Baer1} $E$ is derived injective (Definition \ref{def:dginj}).
		\item  \label{item:Baer2} $E \in \dercomp(R)_{\geq 0}$ and for all objects $Z \in \dercomp(R)^\heartsuit$ which admit an epimorphism $H^0(R) \twoheadrightarrow Z$, we have:
		\begin{equation}
			\dercomp(R)(Z[-n], E) =0
		\end{equation}
			for all $n>0$.
			\item \label{item:Baer3}  $E \in \dercomp(R)_{\geq 0}$ and for all objects $Z \in \dercomp(R)^\heartsuit$ which admit an epimorphism $H^0(R) \twoheadrightarrow Z$, we have:
		\begin{equation}
			\begin{split}
				\dercomp(R)(Z[-1], E) &=0, \\
				\dercomp(R)(H^0(R)[-n], E) &=0, \quad n > 1.
			\end{split}
		\end{equation}
	\end{enumerate}
\end{corollary}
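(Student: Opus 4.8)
The plan is to deduce everything from the refined characterisation of derived injectivity in Proposition~\ref{prop:dginj_test_generators_heart}, applied to $\cat T = \dercomp(R)$, together with the Baer-type criterion of Lemma~\ref{lemma:injective_vanishing_derivedcategory}. First I would check that $\dercomp(R)$ satisfies the hypotheses of Proposition~\ref{prop:dginj_test_generators_heart}: its natural t-structure is non-degenerate (acyclic complexes are zero in the derived category), it has derived injectives by Proposition~\ref{prop:dginj_brownrepresentability} (it is compactly generated, hence well generated, and $H^0$ preserves small direct sums, exactly as in Example~\ref{example:derivedcategory}), and its heart $\dercomp(R)^\heartsuit = \Mod(H^0(R))$ has the single generator $G = H^0(R)$. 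With this choice of generator the objects $Z \in \dercomp(R)^\heartsuit$ admitting an epimorphism $H^0(R) \twoheadrightarrow Z$ are precisely the quotients of $H^0(R)$, i.e.\ the modules $H^0(R)/\mathfrak a$ tested in the classical Baer criterion.

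For $\eqref{item:Baer1} \Rightarrow \eqref{item:Baer2}$ I would simply invoke Proposition~\ref{prop:dginj_equivalent_definitions} (equivalently, condition~\eqref{item:dginj_test_generators_heart_3} of Proposition~\ref{prop:dginj_test_generators_heart}): a derived injective $E$ lies in $\dercomp(R)_{\geq 0}$ and satisfies $\dercomp(R)(Z[-n],E) \cong 0$ for \emph{every} $Z$ in the heart and all $n \geq 1$, in particular for the quotients of $H^0(R)$. The implication $\eqref{item:Baer2} \Rightarrow \eqref{item:Baer3}$ is immediate, since $\eqref{item:Baer3}$ only demands a sub-collection of the vanishings already granted by $\eqref{item:Baer2}$: the case $n=1$ of the first family, and the second family obtained by taking $Z = H^0(R)$ (which is one of the admissible quotients).

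The substance is in $\eqref{item:Baer3} \Rightarrow \eqref{item:Baer1}$, which I would prove by verifying condition~\eqref{item:dginj_test_generators_heart_4} of Proposition~\ref{prop:dginj_test_generators_heart} for $G = H^0(R)$. The vanishing $\dercomp(R)(H^0(R)[-n],E) \cong 0$ for all $n \geq 1$ is part of the hypothesis (the case $n=1$ being the $Z=H^0(R)$ instance of the first family, the cases $n>1$ the second family). It remains to show $H^0(E)$ is injective in $\Mod(H^0(R))$, and here Lemma~\ref{lemma:injective_vanishing_derivedcategory} reduces us to checking $\dercomp(R)(Z[-1],H^0(E)) \cong 0$ for every quotient $Z$ of $H^0(R)$. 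Since $E \in \dercomp(R)_{\geq 0}$ we have $H^0(E) \cong \tau_{\leq 0}E$, so the canonical triangle $H^0(E) \to E \to \tau_{\geq 1}E$ gives an exact sequence
\[
\dercomp(R)(Z, \tau_{\geq 1}E) \to \dercomp(R)(Z[-1], H^0(E)) \to \dercomp(R)(Z[-1], E),
\]
whose left term vanishes because $Z \in \dercomp(R)^\heartsuit \subseteq \dercomp(R)_{\leq 0}$ while $\tau_{\geq 1}E \in \dercomp(R)_{\geq 1}$, and whose right term vanishes by hypothesis; hence the middle term vanishes. Thus all of condition~\eqref{item:dginj_test_generators_heart_4} holds and $E$ is derived injective.

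The main obstacle is precisely this last implication, and within it the delicate point is that the test class has been shrunk to the quotients of $H^0(R)$: one must be sure this restricted family still detects both injectivity of $H^0(E)$ — which is exactly the content of Lemma~\ref{lemma:injective_vanishing_derivedcategory} — and the required vanishing against the generator $G = H^0(R)$ — which is automatic, $G$ itself being one of these quotients. Once this is set up, the argument is routine bookkeeping with truncations and long exact sequences, of the same flavour already carried out in the proofs of Proposition~\ref{prop:dginj_test_generators_heart} and Lemma~\ref{lemma:injective_vanishing_derivedcategory}.
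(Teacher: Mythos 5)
Your proposal is correct and follows essentially the same route as the paper: the implications $\eqref{item:Baer1}\Rightarrow\eqref{item:Baer2}\Rightarrow\eqref{item:Baer3}$ are handled identically, and for $\eqref{item:Baer3}\Rightarrow\eqref{item:Baer1}$ the paper likewise uses the triangle $H^0(E)\to E\to\tau_{\geq 1}E$ and the resulting long exact sequence to show $\dercomp(R)(Z[-1],H^0(E))\cong 0$, invokes Lemma~\ref{lemma:injective_vanishing_derivedcategory} to get injectivity of $H^0(E)$, and then concludes via Proposition~\ref{prop:dginj_test_generators_heart} with the single generator $H^0(R)$. Your explicit verification of the hypotheses of Proposition~\ref{prop:dginj_test_generators_heart} for $\dercomp(R)$ is a welcome (and correct) addition that the paper leaves implicit.
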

\begin{proof}
$\eqref{item:Baer1} \Rightarrow \eqref{item:Baer2}$ is a particular case of Proposition \ref{prop:dginj_test_generators_heart}; $\eqref{item:Baer2} \Rightarrow \eqref{item:Baer3}$ is straightforward (of course, $H^0(R)$ admits an epimorphism $H^0(R) \to H^0(R)$).

Let us now prove $\eqref{item:Baer3} \Rightarrow \eqref{item:Baer1}$. From the condition $\dercomp(R)(Z[-1], E) \cong 0$ (for $Z \in \dercomp(R)^\heartsuit$ which admits an epimorphism $H^0(R) \twoheadrightarrow Z$) we can prove that $H^0(E)$ is injective. Indeed, we take the canonical distinguished triangle:
\[
H^0(E) \to E \to \tau_{\geq 1} E,
\]
and for $Z \in \cat T^\heartsuit$ which admits an epimorphism $H^0(R) \twoheadrightarrow Z$ we have the induced long exact sequence:
\[
\cat T(Z[-1], (\tau_{\geq 1} E)[-1]) \to \cat T(Z[-1], H^0(E)) \to \cat T(Z[-1], E).
\]
We have $\cat T(Z[-1], (\tau_{\geq 1} E)[-1]) \cong \cat T(Z, \tau_{\geq 1} E) \cong 0$, and $T(Z[-1], E) \cong 0$ by hypothesis. We conclude that
\[
\cat T(Z[-1], H^0(E)) \to 0
\]
is a monomorphism, therefore $\cat T(Z[-1], H^0(E)) \cong 0$. Then, we can invoke the above Lemma \ref{lemma:injective_vanishing_derivedcategory} and conclude that $H^0(E)$ is injective, as claimed. Now, the result easily follows from Proposition \ref{prop:dginj_test_generators_heart}.
\end{proof}
\begin{remark}
The Baer criterion characterizing derived injectives in $\dercomp(R)$ (cf. \Cref{corollary:derivedBaer}) can be easily generalized to the ``several objects'' case, that is, to a criterion for derived injectivity in $\dercomp(\smallcat a)$, where $\smallcat a$ is a small $\basering k$-linear dg-category concentrated in nonpositive degrees. The proof is parallel to the one we have provided for the single-object case; notice that in the proof of \Cref{lemma:injective_vanishing_derivedcategory} one needs to replace the classical Baer criterion for categories of modules over a ring with its generalization to any Grothendieck category \cite[Prop 3.2]{crivei-mitchell-generalized}.
\end{remark}

\bibliographystyle{amsplain}
\bibliography{biblio}

\end{document}